\numberwithin{equation}{section}
\newtheorem{theorem}{Theorem}[section]
\newtheorem{proposition}[theorem]{Proposition}
\newtheorem{lemma}[theorem]{Lemma}
\newtheorem{definition}{Definition}[section]
\newtheorem{remark}{Remark}[section]
\numberwithin{equation}{section}
\numberwithin{equation}{section}
\DeclareMathOperator{\spt}{spt}
\DeclareMathOperator{\Lip}{Lip}
\begin{document}
\title[Summability estimates on transport densities]
{Optimal transportation with boundary costs and summability estimates on the transport density }
\author[S. Dweik]{Samer Dweik}
\address{Laboratoire de Math\'ematiques d'Orsay, Univ. Paris-Sud, CNRS, Universit\'e Paris-Saclay, 91405 Orsay Cedex, France}
\email{Samer.Dweik@math.u-psud.fr}
\maketitle
\begin{abstract} In this paper we analyze a mass transportation problem in a bound\-ed
domain with the possibility to transport mass to/from the boundary, paying the transport cost, that is given by the Euclidean distance plus an extra cost depending on the exit/entrance point. This problem appears in import/export model, as well as in some shape optimization problems.
We study the $L^p$ summability of the transport density which does not follow from standard theorems, as the target measures are not absolutely continuous but they have some parts of them which are concentrated on the boundary. We also provide the relevant duality arguments to connect the corresponding Beckmann and Kantorovich problems to a formulation with Kantorovich potentials with Dirichlet boundary conditions.
\end{abstract}

    % Mass transport problems have been widely considered in the literature recently. This
%is due not only to its relevance for applications but also for the novelty of the methods
%needed for its solution. The origin of such problems dates back to a work from 1781 by
%Gaspard Monge, M\'emoire sur la th\'eorie des d\'eblais et des remblais, where he formulated
%a natural question in economics which deals with the optimal way of moving points from
%one mass distribution to another so that the total work done is minimized. Here the cost
%of moving one unit of mass from $x$ to $y$ is measured with the Euclidean distance and the
%total work done is the sum (integral) of the transport cost, $|x - y|$, times the mass that
%is moved from $x$ to $y$. Evans and Gangbo in \cite{5} used a PDE approach to find a proof of
%the existence of an optimal transport map for the classical Monge problem, different to
%the first one given by Sudakov in 1979 by means of probability methods (\cite{10}, see also
%\cite{1} and \cite{2}). For a general reference on %transport problems we refer to \cite{8}, \cite{11} and \cite{12}.
 
  \section{Introduction} \label{1}
  In this paper we study a mass transportation problem in a bounded
domain where there is the possibility of import/export mass across the boundary
paying a tax fee in addition to the transport cost that is assumed to be given by the Euclidean distance. Before entering the details of this variant problem, let us introduce the standard Kantorovich problem. \\

Let $f^+$ and $f^-$ be two finite positive measures on a bounded domain $\Omega \subset \mathbb{R}^d$ satisfying the mass balance condition $f^+(\bar{\Omega})=f^-(\bar{\Omega})$. The classical Kantorovich problem is the following\,:
\\
% find a map $T: \bar{\Omega} \mapsto \bar{\Omega}$ pushing the first one onto the other, i.e. such that 
%$$ \int_{A} \mathrm{d}f^-(y)= \int_{T^{-1}(A)} \mathrm{d}f^+(x)\;\mbox{for every Borel set}\; A$$
%and minimizing the quantity
%$$\min\left\{\int_{\bar{\Omega}}|x-T(x)|\mathrm{d}f^+:\;T_\#f^+=f^-\right\}\qquad(\mbox{MP})$$
%among all the maps satisfying this condition.\\
%The generalization that appears as natural from the work of Kantorovich (\cite{Kanto}) of the problem raised
%by Monge is the following:
%\\
%\\

Set $$\Pi(f^+,f^-):=\left\{\gamma \in \mathcal{M}^+(\bar{\Omega} \times \bar{\Omega})\,:\,(\Pi_{x})_{ \#}\gamma =f^+,\,(\Pi_{y})_{ \#}\gamma =f^-\;\right\},$$
then we minimize the quantity
$$\min\left\{\int_{\bar{\Omega} \times \bar{\Omega} }|x-y|\mathrm{d}\gamma:\,\gamma \in \Pi(f^+,f^-)\right\}\qquad(\mbox{KP})$$
where $\Pi_{x}$ and $\Pi_y$ are the two projections of $\bar{\Omega} \times \bar{\Omega}$ onto $\bar{\Omega}$.\\
%The minimizers for (KP) are called \textit{optimal transport plans}.\\

In \cite{7}, the authors introduce a variant of (KP). They study a mass transportation problem between two masses $f^+$ and $f^-$ (which do not have a priori the same total masses) with the possibility of transporting some mass to/from the boundary, paying the transport cost $c(x,y):=|x-y|$ plus an extra cost $g_2(y)$ for each unit of mass that comes out from a point $y \in \partial\Omega$ (the export taxes) or $-g_1(x)$ for each unit of mass that enters at the point $x \in \partial\Omega$ (the import taxes). This means that we can use $\partial\Omega$ as an infinite reserve/repository, we can take as much mass as we wish from the boundary, or send back as much mass as we want, provided that we pay the transportation cost plus the import/export taxes. \\

In other words, given the set
$$\Pi b(f^+,f^-):=\left\{\gamma \in \mathcal{M}^+(\bar{\Omega}\times\bar{\Omega})\,:\,(\Pi_{x})_{ \#}(\gamma) .1_{\accentset{\circ}{\Omega}}=f^+,\,(\Pi_{y})_{ \#}(\gamma) .1_{\accentset{\circ}{\Omega}}=f^-\right\},$$
 we minimize the quantity
$$
\min\left\{\int_{\bar{\Omega}\times \bar{\Omega}}|x-y|\mathrm{d}\gamma
  +\int_{\partial\Omega}g_2\mathrm{d}(\Pi_{y})_{\#}\gamma -\int_{\partial\Omega}g_1\mathrm{d}(\Pi_{x})_{\#}\gamma
   \,:\,\gamma \in \Pi b (f^+,f^-) \right\}.\qquad(\mbox{KPb})$$
% Notice that in this transport problem two masses $\chi^+ \;\mbox{and}\; \chi^-$ appear on $\partial\Omega $, they are part of the unknown, and encode the imported/exported mass, respectively. In addition, we can use $\partial\Omega$ as an infinite reserve/repository, we can take as much mass as we wish from the boundary, or send back as much mass as we want, provided that we pay the transportation cost plus the import/export taxes. 
\\
On the other hand, let us consider the following problem 
$$\min\left\{|W|(\bar{\Omega})\,:\,W \in \mathcal{M}^d(\bar{\Omega}),\,\nabla\cdot W=f \right\} \quad \mbox{(BP)}$$
where $\mathcal{M}^d(\bar{\Omega})$ is the space of vector measures and for $W \in \mathcal{M}^d(\bar{\Omega})$, $|W|(\bar{\Omega})$ denotes the total variation measure (note that $|W|(\bar{\Omega})$ is a norm on $\mathcal{M}^d(\bar{\Omega})$).\\

It is well known that if $\Omega$ is convex, then (BP) is equivalent to (KP), i.e the values of both problems are equal and we can construct a minimizer for (BP) from a minimizer for (KP) and vice versa. 
%In addition, if $\,f^+ \ll \mathcal{L}^d\,$ then (BP) is also well-posed in $L^1$ instead of the space of vector measure (see for instance Chapter 4 in \cite{8}). \\
\\
\\
 From the equality $\min \mbox{(KP)}=\min \mbox{(BP)}$, it is easy to see that $\min \mbox{(KPb)}=\min \mbox{(BPb)}$, where (BPb) is the following problem
$$\min\left\{|W|(\bar{\Omega})+\int_{\partial\Omega}g_2\mathrm{d}\chi^- -\int_{\partial\Omega}g_1\mathrm{d}\chi^+\,:\,W \in \mathcal{M}^d(\bar{\Omega}),\,\chi \in \mathcal{M}(\partial\Omega),\,\nabla\cdot W=f+\chi \right\}.$$ \\
Before building a minimizer $W$ for (BP), take a minimizer $\gamma$ for (KP) (which is called \textit{optimal transport plan}) and define the transport density $\sigma$ associated with $\gamma$ as follows
\begin{equation} \label{DefTrans}
<\sigma,\varphi>=\int_{\bar{\Omega} \times \bar{\Omega}}\mathrm{d}\gamma(x,y)\int_{0}^{1}\varphi(\omega_{x,y}(t))|\dot{\omega}_{x,y}(t)|\mathrm{d}t\;\;\;\mbox{for all}\;\;\varphi \in C(\bar{\Omega})
\end{equation}
where $\,\omega_{x,y}$ is a curve parametrizing the straight line segment connecting $x$ to $y$. \\ 

Then it is easy to check that the vector field $W$ given by $W=-\sigma \nabla u$ is a solution of the above minimization problem (BP), where $u$ is a maximizer (called \textit{Kantorovich potential}) for the following problem 
$$\sup\left\{ \int_{\Omega} u\,\mathrm{d}(f^+ -f^-)\,:\,u\in \Lip_1 \right\}.\qquad \mbox{(DP)}$$
Actually, it is possible to prove that the maximization problem above is the dual of (KP) and its value equals $\min \mbox{(KP)}$.
%Should a minimizer $\gamma$ be of the form $(Id,T)_{\#}f^+$ for a measurable map $T: \bar{\Omega} \mapsto \bar{\Omega}$, the map $T$ would be an $\textit{optimal transport map}$ from $f^+$ to $f^-$. For the details about Optimal Transport theory, its history, and the main results, we refer to \cite{1}, \cite{2}, \cite{CafFelMcC}, \cite{5}, \cite{8} and \cite{11}.\\
% Notice in particular that one can write 
%\begin{equation*}
%\sigma(A)=\int_{\bar{\Omega} \times \bar{\Omega}} \mathcal{H}^1(A \cap [x,y])\mathrm{d}\gamma(x,y)\;\;\;\mbox{for every Borel set}\;A
%\end{equation*}
%where $\mathcal{H}^1$ stands for the 1-dimensional Hausdorff measure. This means that $\sigma(A)$ is the work for transporting the mass through the set $A$. 
\\ 

In addition, 
%We recall some properties of $\sigma$
 %It is well known that there exists a 1-Lipschitz function $u$ (called Kantorovich potential) such that 
 $(\sigma,u)$ solves a particular PDE system, called Monge-Kantorovich system\,:
\begin{equation}\label{MKsyst1}
\begin{cases}
-\nabla\cdot (\sigma\nabla u)=f=f^+-f^-&\mbox{ in }\Omega\\
\sigma\nabla u\cdot n=0 &\mbox{ on }\partial\Omega\\
|\nabla u|\leq 1&\mbox{ in }\Omega,\\
|\nabla u|= 1&\sigma-\mbox{a.e. }\end{cases}
\end{equation}
%where $\sigma$ is the transport density associated with the transport of $f^+$ onto $f^-$.\\
%$\sigma$ and the optimal vector field $v$ appear in many applications. The role of $\sigma$ has been clarified for its applications to shape optimization problems in \cite{BouButJEMS}. 
%Notice that in \cite{DumIgb,PieCan}, the same pair $(\sigma,u)$ also models (in a statical or dynamical framework) the configuration of stable or growing sandpiles, where $u$ gives the pile shape and $\sigma$ stands for sliding layer. 
%The minimal flow problem $\min\{\int|v(x)|dx\,:\,\nabla\cdot v=f\}$, first introduced in \cite{1}, has many strictly convex variants, dating back to \cite{1} itself and later studied in \cite{BraCarSan}, modeling traffic congestion. The minimization of the $L^1$ norm under divergence constraints also has applications in image processing, as in \cite{LelLorSchVal,BriBurGra}, in particular because the $L^1$ norm (and not its strictly convex variants) induces sparsity.
%Since $u$ is always Lipschitz, we investigate only the summability of the transport density $\sigma$. For this purpose, we introduce the following 
The summability of $\sigma$ has been the object of intensive research in the last few years, and in particular we have the following\,:
\begin{proposition}\label{prop transp dens}
 Suppose $f^+ \ll \mathcal{L}^d$ or $f^- \ll \mathcal{L}^d$, then the transport density $\sigma$ is unique (i.e. does not depend on the choice of the optimal transport plan $\gamma$) and $\sigma \ll \mathcal{L}^d$.
Moreover, if both $ f^+,\,f^- \in L^p(\Omega)$, then $\sigma$ also belongs to $L^p(\Omega)$.\end{proposition}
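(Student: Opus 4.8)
The plan is to establish Proposition \ref{prop transp dens} in two logically separate pieces: first uniqueness and absolute continuity of $\sigma$ under the assumption that (say) $f^+ \ll \mathcal{L}^d$, and then the $L^p$ summability when both densities lie in $L^p(\Omega)$. For the first part, I would reason via the interpolation/displacement structure of the transport density. The key idea is that if $f^+ \ll \mathcal{L}^d$, one can use McCann's result that optimal transport between an absolutely continuous measure and an arbitrary measure is induced by a map, and more importantly that the intermediate measures $\mu_t := (\pi_t)_\# \gamma$, where $\pi_t(x,y) = (1-t)x + ty$, stay absolutely continuous for $t \in [0,1)$. Since $\sigma = \int_0^1 \mu_t^{(x,y)}\,\dd t$ in the appropriate sense (more precisely $\sigma = \int_0^1 (e_t)_\# (|x-y|\gamma)\,\dd t$), absolute continuity of the $\mu_t$'s for a.e.\ $t$ forces $\sigma \ll \mathcal{L}^d$. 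Uniqueness then follows because the map is unique and the transport density is determined by the geometry of the transport rays, which are essentially disjoint; alternatively one invokes the characterization of $\sigma$ as the unique solution to the Monge--Kantorovich system \eqref{MKsyst1} together with a fixed Kantorovich potential, exploiting that $|\nabla u| = 1$ $\sigma$-a.e.\ pins down $\sigma$ once one knows it is a function.

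**Next I would turn to the $L^p$ bound**, which is the substantive analytic content. The standard approach — due to De Pascale--Pratelli and refined by Santambrogio — is to estimate the interpolant densities $\mu_t$ directly. One shows that for each $t \in [0,1/2]$ one has a pointwise bound of the form $\mu_t(z) \le C(d)\, (1-t)^{-d}\, \|f^+\|_{L^\infty}$-type control in the $L^\infty$ case, and in the $L^p$ case the cleaner statement is $\|\mu_t\|_{L^p} \le \|f^+\|_{L^p}$ for $t \in [0,1/2]$ and $\|\mu_t\|_{L^p} \le \|f^-\|_{L^p}$ for $t \in [1/2,1]$, using that $\pi_t$ restricted to the support of the optimal $\gamma$ is injective with controlled Jacobian: indeed $\mu_t$ is the push-forward of $f^+$ under $z \mapsto (1-t)z + tT(z)$, whose differential is $(1-t)I + tDT$, and $DT$ is (in the symmetrized sense) a positive semidefinite matrix because $T$ is the gradient of a convex-type function along transport rays — actually for the Monge cost one uses that the map is monotone along rays, giving $\det((1-t)I + tDT) \ge (1-t)^d$. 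This yields $\mu_t \le (1-t)^{-d}\, (f^+ \circ (\text{inverse}))$ and hence, via the change of variables and Jensen's inequality, the $L^p$ estimate on $\mu_t$. Then I would integrate: $\|\sigma\|_{L^p} \le \int_0^1 \| (e_t)_\#(|x-y|\gamma)\|_{L^p}\,\dd t$, split at $t=1/2$, bound the mass factor $|x-y|$ by $\operatorname{diam}(\Omega)$, and conclude $\|\sigma\|_{L^p} \le C(d,\Omega)\,(\|f^+\|_{L^p} + \|f^-\|_{L^p})$.

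**The main obstacle** I anticipate is making the Jacobian estimate rigorous when $T$ is merely a monotone map (not a diffeomorphism) and the optimal plan $\gamma$ is not necessarily induced by a single map when only one marginal is absolutely continuous: one must work with a suitable selection of transport rays, use that interior points of transport rays are non-branching, and appeal to the area formula for BV or approximately differentiable maps. The cleanest route is probably to first prove the estimate under extra regularity ($f^\pm$ smooth and bounded below, $\Omega$ convex and smooth) where $u \in C^{1,\alpha}$ away from the cut locus and the rays foliate nicely, obtain the bound with a constant independent of the regularization, and then pass to the limit by approximation — using the stability of optimal plans and the lower semicontinuity of the $L^p$ norm under weak-$*$ convergence of the transport densities. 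A secondary technical point is handling the endpoint behavior near $t = 0$ and $t = 1$: the bound on $\mu_t$ degenerates as $t \to 1$ (resp.\ $t \to 0$), which is exactly why one splits the integral at $t = 1/2$ and uses the symmetric estimate on each half, so that on $[0,1/2]$ the factor $(1-t)^{-d}$ stays bounded by $2^d$.

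**One should also note** that for the present paper the relevant statement will ultimately be applied to the boundary-cost problem (KPb), where $f^+, f^-$ are replaced by measures with parts living on $\partial\Omega$; but Proposition \ref{prop transp dens} as stated is the classical ingredient, so here I would simply cite/reprove the interior estimate and defer the boundary complications to the later sections.
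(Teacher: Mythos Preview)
The paper does not give its own proof of this proposition: immediately after the statement it simply says ``These properties are well-known in the literature'' and cites \cite{55}, \cite{66}, \cite{77}, \cite{33}, \cite{9}. So there is no in-paper argument to compare your proposal against. Your sketch is essentially the approach of Santambrogio \cite{9} (displacement interpolants $\mu_t$, Jacobian control of $T_t$, integration in $t$ with a split at $t=1/2$), which is exactly one of the references the paper defers to; in that sense your proposal and the paper are aligned.

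One small imprecision worth flagging: the heuristic ``$DT$ is positive semidefinite because $T$ is the gradient of a convex function'' is the picture for the quadratic cost, not for the Monge cost $|x-y|$. For the linear cost the injectivity of $T_t$ and the lower bound on its Jacobian do not come from convexity of a potential but from the geometry of transport rays (non-crossing of rays, monotonicity along each ray), or alternatively from selecting, among all linear-cost optimal plans, the one that also minimizes a secondary quadratic cost. This is precisely the technical content of \cite{9} and \cite{33}, and it is what makes the estimate $\det DT_t \ge (1-t)^d$ (or its measure-theoretic surrogate) valid here. Your identification of this as ``the main obstacle'' is correct; just be aware that the resolution is ray geometry, not convexity of $T$.
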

These properties are well-known in the literature, and we refer to \cite{55}, \cite{66},  \cite{77}, \cite{33} and \cite{9}.\\ 

Hence, if $\,f^+ \ll \mathcal{L}^d\,$ or $\,f^- \ll \mathcal{L}^d\,$, then (BP) is also well-posed in $L^1$ instead of in the space of vector measure. In addition, $W:=- \sigma \nabla u$ (which minimizes (BP)) belongs to $L^p(\Omega,\mathbb{R}^d)$ provided that $f \in L^p(\Omega)$. \\
 
Suppose that $f$ has not zero mass, then a variant of this problem, which is already present in  \cite{BouButJEMS},\,\cite{4} and \cite{DweSan}, is to complete the Monge-Kantorovich system with a Dirichlet boundary condition. In optimal transport terms, this corresponds to the possibility of transporting some mass to/from the boundary, paying only the transport cost that is given by the Euclidean distance. The easiest version of the system becomes 
\begin{equation}\label{MKsyst2}
\begin{cases}
-\nabla\cdot (\sigma\nabla u)=f &\mbox{ in }\Omega\\
u=0 &\mbox{on }\partial\Omega,\\
|\nabla u|\leq 1&\mbox{ in }\Omega,\\
|\nabla u|= 1&\sigma-\mbox{a.e. }\end{cases}
\end{equation}
Notice that in \cite{DumIgb,PieCan}, the same pair $(\sigma,u)$ (which solves (\ref{MKsyst2})) also models (in a statical or dynamical framework) the configuration of stable or growing sandpiles, where $u$ gives the pile shape and $\sigma$ stands for sliding layer. 
%and corresponds to an optimal transport problem between $f^+$ and an unknown measure $f^-$, supported on $\partial\Omega$.
% By optimality, $f^-$ can be proven to be equal to the image of $f^+$ through the projection onto the boundary $\partial\Omega$.
\\

But, we can replace also the Dirichlet boundary condition $u=0$ by $u=g$. In this case, the system becomes 
\begin{equation}\label{MKsyst22}
\begin{cases}
-\nabla\cdot (\sigma\nabla u)=f &\mbox{ in }\Omega\\
u=g &\mbox{on }\partial\Omega,\\
|\nabla u|\leq 1&\mbox{ in }\Omega,\\
|\nabla u|= 1&\sigma-\mbox{a.e. }\end{cases}
\end{equation}
This system describe the growth of a sandpile on a bounded table, with a wall on the boundary of a height $g$, under the
action of a vertical source here modeled by $f$. Notice that to solve this system, it is clear that $g$ must be 1-Lipschitz.
\\
%Notice that to get a $L^p$ summability on $\sigma$, it is natural to suppose that $g$ is at least $\lambda$-Lip with $\lambda<1$ indeed, we can find $f \in L^p(\Omega)$ and $\chi \in \mathcal{M}(\partial\Omega)$ such that the transport density $\sigma$ between $f^+ + \chi^+$ and $f^- + \chi^-$ is not in $L^p(\Omega)$. But for this classical Monge transport problem, if $u$ is the Kantorovich potential (which is 1-Lip), then $(\sigma,u)$ solves (\ref{MKsyst22}) with $g=u$.\\

In \cite{7}, the system (\ref{MKsyst22}) is complemented with boundary conditions $g_1 \leq u \leq g_2$ , instead of $u=g$, on $\partial\Omega$. The system becomes 
\begin{equation}\label{MKsyst222}
\begin{cases}
-\nabla\cdot (\sigma\nabla u)=f &\mbox{ in }\Omega\\
g_1 \leq u \leq g_2 &\mbox{on }\partial\Omega,\\
|\nabla u|\leq 1&\mbox{ in }\Omega,\\
|\nabla u|= 1&\sigma-\mbox{a.e. }\end{cases}
\end{equation}
Here, there is no obvious interpretation in terms of sandpiles. However, this corresponds to a mass transportation problem between two masses $f^+$ and $f^-$ with the possibility of transporting some mass to/from the boundary, paying a transport cost plus two extra costs 
  %Then
 %  such that the usual mass balance condition $\int_\Omega f^+(x)\mathrm{d}x=\int_\Omega f^-(y)\mathrm{d}y\; $is not required to be satisfied and 
 %we want to  choose to export or import mass to/from the boundary provided we transport all the mass in $f^+$ and covers all the mass of $f^-$. In doing this, we pay the transport cost given by the Euclidean distance $c(x,y)=|x-y|$ plus an extra cost: $g_2(y)$ for each unit of mass that comes out from a point $y \in \partial\Omega$ (the export taxes) or $-g_1(x)$ for each unit of mass that enters at the point $x \in \partial\Omega$ (the import taxes). 
  $-g_1$ and $g_2$ (the import/export taxes). \\
  % Notice that in this transport problem two masses $\chi^+ \;\mbox{and}\; \chi^-$ appear on $\partial\Omega $, they are part of the unknown, and encode the imported/exported mass, respectively. This means that we can use $\partial\Omega$ as an infinite reserve/repository, we can take as much mass as we wish from the boundary, or send back as much mass as we want, provided that we pay the transportation cost plus the import/export taxes. 
 %Taking into account these masses we must have the equilibrium condition $$( f^+ + \chi^+)(\bar{\Omega})=( f^- + \chi^-)(\bar{\Omega}). $$
     
   The authors of \cite{7} also prove that the dual of (KPb) is the following 
$$\sup\left\{ \int_{\Omega} u\,\mathrm{d}(f^+ -f^-)\,:\,u\in \Lip_1,\; g_1 \leq u \leq g_2\,\mbox{ on }\partial\Omega\right\}.\qquad \mbox{(DPb)}$$
%Their proof is based on the Fenchel-Rocafellar duality Theorem and it is decomposed into two steps\,: firstly, they suppose that the inequality in (\ref{g_1g_2}) is strict and secondly, they use an approximation argument to cover the other case. 
  The reader will see later that in this paper we also give an alternative proof for this duality formula that we consider simpler than that in \cite{7}.\\
  % Our proof is based on a simple convex analysis trick already developed in \cite{BouButJEMS}.\\
 
 In addition, if we are able to prove that the vector measure $W:=- \sigma \nabla u$ minimizes (BPb), where $u$ is a maximizer for (DPb) and $\sigma$ is the transport density associated with an optimal transport plan $\gamma$ for (KPb), then (BPb) is well-posed in $L^1(\Omega,\mathbb{R}^d)$ instead of the space of vector measure as soon as one has $\sigma \ll \mathcal{L}^d$. Here, we need the convexity of $\Omega$ to define $\sigma$ (see \ref{DefTrans}), but we will show that under some assumptions on $g_1$ and $g_2$, we can also use \ref{DefTrans} to define $\sigma$, even if $\Omega$ is not convex.\\
 
   The main object of the present paper is the problem (KPb). First, we give an alternative proof for its dual formulation, which is already proved in \cite{7} and second, we are interested to study the $L^p$ summability of the transport density $\sigma$, which does not follow from Proposition \ref{prop transp dens}, since in this case
   the target measures are not in $L^p$ as they have some parts which are concentrated on $\partial\Omega$. Note that in \cite{DweSan}, the authors prove that if $g_1=g_2=0$, then the transport density $\sigma$ belongs to $L^p$ provided that $f \in L^p$. Here, our goal is to prove the same $L^p$ result of \cite{DweSan} but for more general costs $g_1$ and $g_2$. First of all, we note that to get a $L^p$ summability on $\sigma$, it is natural to suppose that $g_i$ is strictly better than 1-Lip. Indeed, we can find $f \in L^p(\Omega)$ and $\chi \in \mathcal{M}(\partial\Omega)$ such that the transport density $\sigma$ between $f^+ + \chi^+$ and $f^- + \chi^-$ is not in $L^p(\Omega)$ and in this case if $u$ is the Kantorovich potential (which is 1-Lip), then $(\sigma,u)$ solves (\ref{MKsyst222}) with $g_1=g_2=u$.\\
   
 For this aim, we want to decompose an optimal transport plan $\gamma$ for (KPb) to a sum of three transport plans $\gamma_{ii},\,\gamma_{ib}$ and $\gamma_{bi}$, where each of these plans solves a particular transport problem. Next, we will study the $L^p$ summability of the transport densities $\sigma_{ii},\,\sigma_{ib}$ and $\sigma_{bi}$ associated with these transport plans $\gamma_{ii},\,\gamma_{ib}$ and $\gamma_{bi}$, respectively. In this way, we get the summability of the transport density $\sigma$ associated with the optimal transport plan $\gamma$.\\
 %which is again the sum of three transport densities $\sigma_{ii},\,\sigma_{ib}$ and $\sigma_{bi}$.\\

This paper is organized as follows. In Section \ref{8}, we prove that it is enough to study the summability of $\sigma_{ib}$, to get that of $\sigma$
%we prove that (KPb) has a minimizer $\gamma$, we decompose it to a sum of three plans, where each of them solves a particular transport problem and finally,
and we study duality for (KPb). In Section \ref{2}, we study the $L^p$ summability of the transport density $\sigma_{ib}\,$: firstly, we prove it under an assumption on the geometric form of $\,\Omega\,$ and secondly, we generalize the result to every domain having a uniform exterior ball.
%is a $\textit{round polyhedra}$ 
 In Section \ref{4}, we prove direclty the $L^p$ summability of the transport density $\sigma_{ib}$, only for the case $g_2=0$, by using a geometric lemma. In Section \ref{5}, we  give the proofs of the key Propositions already used in Section \ref{2}, which are very technical and we found better to postpone their presentation. 
 %In Section \ref{6}, we generalize the result of  Section \ref{2} to every domain having a uniform exterior ball, by using an approximation argument.
   
   \section{Monge-Kantorovich problems with boundary costs: existence, characterization and duality} \label{8}
 In this section, we 
 %want to prove that to study the $L^p$ summability of $\sigma$ , it is enough to study only the summability of a transport density $\sigma_{ib}$ associated with an optimal transport plan $\gamma_{ib}$, which solves a mass transportation problem with boundary cost. In addition, we want to give an alternative proof for the duality formula of (KPb), which is inspired from \cite{BouButJEMS} and it will be shorter than the one in \cite{7}.\\
 analyze the problem (KPb). Besides duality questions, we will also decompose it into subproblems. One of this subproblems involves a transport plan $\gamma_{ib}$ (with its transport density $\sigma_{ib})$, where $i$ and $b$ stand for interior and boundary (conversely, we also have a transport plan $\gamma_{bi}$ with $\sigma_{bi}$). We will show that some questions, inclusing summability of $\sigma$, reduce to the study of the summability of $\sigma_{ib}$ and $\sigma_{bi}$.\\
 
  First of all, we suppose that $\,g_1$ and $\,g_2$ are in $C(\partial\Omega)$ and they satisfy the following inequality
 \begin{equation} \label{g_1g_2}
 g_1(x)-g_2(y)\leq |x-y|\;\mbox{for all}\;x,\,y \in \partial\Omega.
 \end{equation}
 %$$ g_1(x)-g_2(y)\leq |x-y|\;\mbox{for all}\;x,\,y \in \partial\Omega.$$
 
Under this assumption, we have the following\,:
\begin{proposition} \label{existence of a minimizer}(KPb) reaches a minimum.
   \end{proposition}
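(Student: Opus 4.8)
The plan is to prove existence of a minimizer for (KPb) by the direct method of the calculus of variations. First I would check that the admissible set $\Pi b(f^+,f^-)$ is nonempty: given any $\gamma_0 \in \Pi(f^+,f^-)$ (which is nonempty since $f^+(\bar\Omega)=f^-(\bar\Omega)$ need not even hold here, so instead one builds a competitor directly, e.g. by sending all of $f^+$ to a fixed boundary point and all of $f^-$ from a fixed boundary point, i.e. $\gamma = f^+ \otimes \delta_{y_0} + \delta_{x_0} \otimes f^-$ with $x_0,y_0 \in \partial\Omega$), one sees the constraint $(\Pi_x)_\#(\gamma)\res\accentset{\circ}{\Omega}=f^+$, $(\Pi_y)_\#(\gamma)\res\accentset{\circ}{\Omega}=f^-$ is satisfied and the objective is finite. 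Thus the infimum is a real number (it is also bounded below: $\int|x-y|\,\dd\gamma \geq 0$ and the two boundary terms are bounded below because $g_1,g_2$ are continuous on the compact set $\partial\Omega$, hence bounded, while the total boundary masses of any minimizing sequence can be controlled — see below).

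Next I would take a minimizing sequence $(\gamma_n) \subset \Pi b(f^+,f^-)$ and establish that it is tight/bounded in mass, so that by Prokhorov (or weak-$*$ compactness of bounded sets in $\mathcal{M}^+(\bar\Omega\times\bar\Omega)$, which is a compact metric space) a subsequence converges weakly-$*$ to some $\gamma$. The key point for the mass bound is that, while the ``interior-to-interior'' part of $\gamma_n$ has mass at most $\min(f^+(\accentset{\circ}\Omega), f^-(\accentset{\circ}\Omega))$ controlled automatically, the mass exchanged with the boundary is a priori unbounded; but if along a minimizing sequence this mass blew up, the cost would blow up too. Precisely, using \eqref{g_1g_2}, the net contribution of a unit of mass routed from a boundary point $x$ to a boundary point $y$ (i.e. $|x-y| + g_2(y) - g_1(x)$) is nonnegative, and more generally one shows that the functional restricted to the boundary-boundary part is $\geq 0$, so that adding or removing such ``boundary loops'' cannot decrease the cost; this lets one assume WLOG that the minimizing sequence has $\gamma_n(\partial\Omega\times\partial\Omega)=0$ and uniformly bounded total mass (bounded by $f^+(\bar\Omega)+f^-(\bar\Omega)$ plus the mass sent to/from the boundary, which in turn is controlled since $|x-y|+g_2(y)$ is bounded below by a constant and the cost is bounded along the minimizing sequence). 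This is the step I expect to be the main obstacle: making rigorous that the boundary masses of a minimizing sequence stay bounded, which requires exploiting \eqref{g_1g_2} carefully rather than just continuity of $g_1,g_2$.

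Then I would verify that the constraint set is closed under weak-$*$ convergence: the maps $\gamma \mapsto (\Pi_x)_\#\gamma$ and $\gamma\mapsto(\Pi_y)_\#\gamma$ are weakly-$*$ continuous, and restriction to the open set $\accentset{\circ}\Omega$ is lower semicontinuous in the appropriate sense; more cleanly, one tests against $\varphi \in C_c(\accentset{\circ}\Omega)$, for which $\int \varphi\,\dd(\Pi_x)_\#\gamma_n = \int \varphi(x)\,\dd\gamma_n \to \int\varphi(x)\,\dd\gamma$, so the marginal conditions pass to the limit and $\gamma \in \Pi b(f^+,f^-)$. Finally, for lower semicontinuity of the objective: the transport term $\gamma\mapsto\int_{\bar\Omega\times\bar\Omega}|x-y|\,\dd\gamma$ is continuous (the integrand is continuous and bounded on the compact $\bar\Omega\times\bar\Omega$); for the boundary terms $\int_{\partial\Omega}g_2\,\dd(\Pi_y)_\#\gamma - \int_{\partial\Omega}g_1\,\dd(\Pi_x)_\#\gamma$ one has to be slightly careful since $\partial\Omega$ is closed, not open, so mass could escape from the interior to the boundary in the limit; I would argue that such a phenomenon can only help (or, after the normalization above that kills boundary-boundary transport and using that $g_i$ extend continuously, write the objective as $\int_{\bar\Omega\times\bar\Omega} c\,\dd\gamma$ for a suitable lower semicontinuous cost $c$ incorporating the taxes, and invoke lower semicontinuity of $\gamma\mapsto\int c\,\dd\gamma$ for l.s.c.\ $c\geq 0$). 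Putting these together, $\gamma$ is admissible and achieves the infimum, so (KPb) has a minimizer. $\qed$
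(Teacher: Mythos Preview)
Your approach is correct and matches the paper's proof: both remove the $\partial\Omega\times\partial\Omega$ part of a minimizing sequence using \eqref{g_1g_2}, obtain a uniform mass bound, extract a weak limit by compactness, and pass to the limit. The paper's mass bound is slightly cleaner than yours: once $\gamma_n(\partial\Omega\times\partial\Omega)=0$, one has directly $\gamma_n(\bar\Omega\times\bar\Omega)\le\gamma_n(\accentset{\circ}\Omega\times\bar\Omega)+\gamma_n(\bar\Omega\times\accentset{\circ}\Omega)=f^+(\bar\Omega)+f^-(\bar\Omega)$ from the marginal constraints, so no separate control of the mass sent to/from the boundary is needed.
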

  \begin{proof} 
  %As $c(x,y)=|x-y|$ is continuous on $\bar{\Omega}\times \bar{\Omega}$, $$\int_{\bar{\Omega}\times \bar{\Omega}}|x-y|\mathrm{d}\gamma_n \rightarrow \int_{\bar{\Omega}\times \bar{\Omega}}|x-y|\mathrm{d}\gamma .$$
 %But
 %  $$(\Pi_{y})_{\#}\gamma_n.1_{\partial{\Omega}}{\rightharpoonup}
% (\Pi_{y})_{\#}\gamma.1_{\partial{\Omega}}, $$ 
%this implies that $$\int_{\partial\Omega}g_2\mathrm{d}(\Pi_{y})_{\#}\gamma_n \rightarrow
%  \int_{\partial\Omega}g_2\mathrm{d}(\Pi_{y})_{\#}\gamma. $$ Similarly,
 %  $$\int_{\partial\Omega}g_1\mathrm{d}(\Pi_{x})_{\#}\gamma_n \rightarrow 
 % \int_{\partial\Omega}g_1\mathrm{d}(\Pi_{x})_{\#}\gamma. $$
 
  First, we observe that if $\gamma \in \Pi b (f^{+},f^{-})$ and  $\tilde{\gamma}:=\gamma .1_{(\partial\Omega\times\partial\Omega)^c}$, then $\tilde{\gamma}$ also belongs to $\Pi b (f^{+},f^{-})$. In addition, we have
$$\int_{\bar{\Omega}\times\bar{\Omega}} |x\;-\;y|\mathrm{d} \gamma +\int_{\partial\Omega}g_2\mathrm{d}(\Pi_{y})_{\#}\gamma - \int_{\partial\Omega}g_1\mathrm{d}(\Pi_{x})_{\#}\gamma$$
$$=\int_{\partial{\Omega}\times\partial{\Omega}} (|x-y| + g_2(y) - g_1(x))\mathrm{d} \gamma + \int_{(\partial{\Omega}\times\partial{\Omega})^c}|x-y|\mathrm{d} \gamma + \int_{{\Omega}^{\circ}\times\partial{\Omega}} g_2(y)\mathrm{d} \gamma 
- \int_{\partial{\Omega}\times{\Omega}^{\circ}} g_1(x)\mathrm{d} \gamma.$$ \\
 As $$|x-y| + g_2(y) - g_1(x) \geq 0 ,$$
  we get 
  \begin{multline*}
  \int_{\bar{\Omega}\times\bar{\Omega}} |x-y|\mathrm{d} \gamma\; +\int_{\partial\Omega}g_2\mathrm{d}(\Pi_{y})_{\#}\gamma\, - \int_{\partial\Omega}g_1\mathrm{d}(\Pi_{x})_{\#}\gamma \\ \geq  \int_{\bar{\Omega}\times\bar{\Omega}} |x-y|\mathrm{d}\tilde{\gamma} \,+\int_{\partial\Omega}g_2\mathrm{d}(\Pi_{y})_{\#}\tilde{\gamma} \,- \int_{\partial\Omega}g_1\mathrm{d}(\Pi_{x})_{\#}\tilde{\gamma}.
  \end{multline*}
  \\
Now, let $(\gamma_n)_n\subset\Pi b(f^+,f^-)$ be a minimizing sequence. Then, we can suppose that $$\gamma_n (\partial\Omega\times\partial\Omega)=0.$$ 
In this case, we get 
\begin{eqnarray*}
\gamma_n(\bar{\Omega}\times\bar{\Omega}) &\leq  & \gamma_n({\Omega}^0\times\bar{\Omega})+
  \gamma_n(\bar{\Omega}\times{\Omega}^0)
\\
&=& f^+(\bar{\Omega}) + f^-(\bar{\Omega}).
\end{eqnarray*}
 Hence, there exist a subsequence $ (\gamma_{n_k})_{n_k}$ and a plan $\gamma \in \Pi b (f^+,f^-) $ such that
 $ \gamma_{n_k} {\rightharpoonup}\,\gamma$. On the other hand, if 
 $$K(\gamma):=\int_{\bar{\Omega}\times\bar{\Omega}} |x-y|\mathrm{d} \gamma\; +\int_{\partial\Omega}g_2\mathrm{d}(\Pi_{y})_{\#}\gamma\, - \int_{\partial\Omega}g_1\mathrm{d}(\Pi_{x})_{\#}\gamma,$$
 then it is easy to see that $K$ is continuous with respect to the weak convergence of measures in $\Pi b(f^+,f^-)$ and $\gamma$ solves (KPb).
 %First, it is easy to see that if $(\gamma_n)_n \subset \Pi b(f^+,f^-)$ and $\gamma_n{\rightharpoonup}\gamma $, then $ K(\gamma_n) \rightarrow K(\gamma)$. On the other hand,
 $\qedhere$
  \end{proof}
 Fix a minimizer $\gamma$ for (KPb) and denote by $\chi^+$ and $\chi^-$ the two positive measures concentrated on the boundary of $\Omega$ such that $(\Pi_{x})_{\#}\gamma=f^++\chi^+  $ and $ (\Pi_{y})_{\#}\gamma=f^-+\chi^-.$ Then, we may see that $\gamma$ is also a minimizer for the following problem
  %\begin{proposition} \label{Prop. 7.2}The plan $\gamma$ is also a minimizer for the following problem
$$
\min\left\{\int_{\bar{\Omega}\times\bar{\Omega}}|x-y|
 \mathrm{d}\gamma\,:\,\gamma \in \Pi(\mu^+,\mu^-)\right\} $$
 where $\mu^\pm:=f^\pm+\chi^\pm$.
% \end{proposition}
%  \begin{proof} Let $\tilde{\gamma} \in \Pi(\mu^+,\mu^-)$ be a minimizer for this problem. Then, we have  
% $$\int_{\bar{\Omega}\times\bar{\Omega}}|x-y|
% \mathrm{d}\tilde{\gamma}\leq \int_{\bar{\Omega}\times\bar{\Omega}}|x-y|
% \mathrm{d}\gamma. $$
% But $$
%(\Pi_{x})_{\#}\gamma=(\Pi_{x})_{\#}\tilde{\gamma}\,\;\mbox{and}\;\, (\Pi_{y})_{\#}\gamma=(\Pi_{y})_{\#}\tilde{\gamma}.$$
%Hence, we get 
%\begin{multline*}
%\int_{\bar{\Omega}\times\bar{\Omega}}|x-y|\mathrm{d}\tilde{\gamma}\, +\int_{\partial\Omega}g_2\mathrm{d}(\Pi_{y})_{\#}\tilde{\gamma} \,- \int_{\partial\Omega}g_1\mathrm{d}(\Pi_{x})_{\#}\tilde{\gamma}\\
%\leq\,\int_{\bar{\Omega}\times\bar{\Omega}}|x-y|\mathrm{d}\gamma\, +\int_{\partial\Omega}g_2\mathrm{d}(\Pi_{y})_{\#}\gamma\, -\int_{\partial\Omega}g_1\mathrm{d}(\Pi_{x})_{\#}\gamma. \qedhere
%\end{multline*}
%which is in fact an equality as $\gamma$ solves (KPb).
%but, $
% \tilde{\gamma} \in \Pi b(f^+,f^-)\; \mbox{and} \;\gamma\; \mbox{is a minimizer for (KPb)},
% $ then $\int_{\bar{\Omega}\times\bar{\Omega}}|x-y|\mathrm{d}\tilde{\gamma}=\int_{\bar{\Omega}\times\bar{\Omega}}|x-y|\mathrm{d}\gamma. \qedhere$\\
 %\end{proof}
 %Notice that as $\chi^+$ and $\chi^-$ are concentrated on the boundary of $\Omega$, then we cannot use the Proposition \ref{prop transp dens} to conclude that the transport density $\sigma_{\gamma}$ between $f^+ +\chi^+$ and  $f^- +\chi^-$ is in $L^p(\Omega)$. But we want to prove that $\sigma_{\gamma}$ still in $L^p(\Omega)$ provided that $f \in L^p(\Omega)$ and under some conditions on $g_1$, $g_2$ and $\Omega$.
\\ 

Set $$\gamma_{ii}:=\gamma. 1_{\Omega^{\circ}\times\Omega^{\circ}},\,
\gamma_{ib}:=\gamma. 1_{\Omega^{\circ}\times\partial\Omega},\,
\gamma_{bi}:=\gamma. 1_{\partial\Omega\times\Omega^{\circ}},\,
\gamma_{bb}:=\gamma. 1_{\partial\Omega\times\partial\Omega}=0 $$ and
$$\nu^+:=(\Pi_{x})_{\#}(\gamma_{ib}),
 \,\nu^-:=(\Pi_{y})_{\#}(\gamma_{bi}).$$ \\
Consider the three following problems\,:

$$
\min \left\{\int_{\bar{\Omega}\times\bar{\Omega}} |x-y|\mathrm{d}\gamma
\,:\,\gamma \in \Pi  (f^{+}-\nu^+,f^{-}-\nu^-)\right\}\;\; \mbox{(P1)}$$
$$
\min \left\{\int_{\bar{\Omega}\times\bar{\Omega}} |x-y|\mathrm{d}\gamma+\int_{\partial{\Omega}}g_2\mathrm{d}(\Pi_{y})_{\#}\gamma
\,:\,\gamma \in  \mathcal{M}^+(\bar{\Omega}\times\bar{\Omega})
,\,(\Pi_{x})_{\#}\gamma=\nu^+ ,\,\spt((\Pi_{y})_{\#}\gamma) \subset \partial\Omega \right\}  \;\;\mbox{(P2)}$$
$$
 \min \left\{\int_{\bar{\Omega}\times\bar{\Omega}} |x-y|\mathrm{d}\gamma -
\int_{\partial{\Omega}}g_1\mathrm{d}(\Pi_{x})_{\#}\gamma
\,:\,\gamma \in \mathcal{M}^+(\bar{\Omega}\times\bar{\Omega})
,\,(\Pi_{y})_{\#}\gamma =\nu^- ,\, \spt((\Pi_{x})_{\#}\gamma) \subset \partial\Omega \right\}. \;\;\mbox{(P3)}$$ \\
 
It is not difficult to prove that $\gamma_{ii},\,\gamma_{ib}$ and $\gamma_{bi}$ solve (P1),\,(P2) and (P3), respectively. In addition, we can see that 
$\gamma_{ib}\,$ is of the form $(Id,T_{ib})_{\#} \nu^+$ and that it solves the following problem
$$\min\left\{\int_{\bar{\Omega}\times\bar{\Omega}}|x-y|\mathrm{d}\gamma
 \,:\,\gamma \in \Pi(\nu^+,(T_{ib})_{\#}\nu^+) \right\},$$
  where $\,T_{ib}(x):=\mbox{argmin}\left\{|x-y|+g_2(y),\,y \in \partial\Omega\right\} \; \mbox{for all} \;x \in \bar{\Omega}.$
  \\
 
  In the same way, $\gamma_{bi}\,$ is of the form $(T_{bi},Id)_{\#} \nu^-$ and it also solves 
 $$\min\left\{\int_{\bar{\Omega}\times\bar{\Omega}}|x-y|\mathrm{d}\gamma
 \,:\,\gamma \in \Pi((T_{bi})_{\#}\nu^-,\nu^-) \right\},$$
where $\,T_{bi}(y):=\mbox{argmin}\left\{|x-y|-g_1(x),\,x \in \partial\Omega\right\} \; \mbox{for all} \;y \in \bar{\Omega}.$
 \\

Let $\sigma$ (resp. $\sigma_{ii},\,\sigma_{ib}$ and $\sigma_{bi}$) be the transport density associated with the optimal transport plan $\gamma$ (resp. $\gamma_{ii},\,\gamma_{ib}$  and $\gamma_{bi}$), therefore $\sigma=\sigma_{ii} + \sigma_{ib} + \sigma_{bi}$. By Proposition \ref{prop transp dens}, if $\Omega$ is convex and $f \in L^p(\Omega)$, then $\sigma_{ii}$ also belongs to $L^p(\Omega)$. Hence, it is enough to study the summability of $\sigma_{ib}$ (the case of $\sigma_{bi}$ will be analogous) to get that of $\sigma$. \\
%This is also equivalent to study the summability of the transport density $\sigma\,$ in the case where $\,f^-=0$.\\
% Then, after this section, we are interested to study only the summability of $\sigma_{ib}$. \\

On the other hand, the proof of the duality formula of (KPb), in \cite{7}, is based on the Fenchel-Rocafellar duality Theorem and it is decomposed into two steps\,: firstly, the authors suppose that the inequality in (\ref{g_1g_2}) is strict and secondly, they use an approximation argument to cover the other case. 
  Here, we want to give an alternative proof for this duality formula, based on a simple convex analysis trick already developed in \cite{BouButJEMS}.\\

\begin{proposition}\label{duality} Let $g_1$ and $g_2$ be in $C(\partial\Omega)
 $. %such that $$g_1(x)-g_2(y)\leq |x-y|\;\mbox{ for all} \;x,y \in \partial\Omega.$$ 
Then under the assumption (\ref{g_1g_2}), we have the following equality \\
$$\min \left\{\int_{\bar{\Omega}\times\bar{\Omega}}|x-y|\mathrm{d}\gamma+
\int_{\partial\Omega}g_2\mathrm{d}(\Pi_{y})_{\#}\gamma-\int_{\partial\Omega}g_1\mathrm{d}(\Pi_{x})_{\#}\gamma\,:\,\gamma \in \Pi b(f^+,f^-)\right\} \quad \mbox{(KPb)}$$
$$ =\sup\left\{\int_\Omega \varphi \mathrm{d}(f^+ - f^-)
\,:\,\varphi\in\Lip_1,\,g_1\leq\varphi\leq g_2\; \mbox{on}\;\,
\partial\Omega\right\}.\quad \mbox{(DPb)}$$
\\
Notice that if (\ref{g_1g_2}) is not satisfied, then both sides of this equality are $-\infty$. \\
%$$ =\sup\left\{\int_\Omega \varphi \mathrm{d}f^++\int_\Omega\psi\mathrm{d}f^-
%\;\varphi\oplus\psi\leq|x-y|,\;g_1\leq\varphi,\;-g_2\leq\psi\;\mbox{on}\;\partial\Omega,\;\varphi,\;\psi \in C(\bar{\Omega})\right\}.$$
\end{proposition}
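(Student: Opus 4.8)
The plan is to derive the identity from the \emph{classical} Kantorovich--Rubinstein duality combined with a minimax exchange, which we consider simpler than the Fenchel--Rockafellar argument of \cite{7}. Write $\mathcal T(\mu^+,\mu^-):=\min\{\int_{\bar\Omega\times\bar\Omega}|x-y|\,\dd\gamma:\gamma\in\Pi(\mu^+,\mu^-)\}$ for the classical transport cost. The inequality $\min(\text{KPb})\ge\sup(\text{DPb})$ is elementary: for $\varphi\in\Lip_1$ with $g_1\le\varphi\le g_2$ on $\partial\Omega$ and any $\gamma\in\Pi b(f^+,f^-)$, write $(\Pi_x)_\#\gamma=f^++\chi^+$, $(\Pi_y)_\#\gamma=f^-+\chi^-$ with $\chi^\pm\in\mathcal M^+(\partial\Omega)$; then $\int|x-y|\,\dd\gamma\ge\int(\varphi(x)-\varphi(y))\,\dd\gamma=\int_\Omega\varphi\,\dd(f^+-f^-)+\int_{\partial\Omega}\varphi\,\dd\chi^+-\int_{\partial\Omega}\varphi\,\dd\chi^-$, and since $\chi^\pm\ge0$, $\varphi\ge g_1$ and $\varphi\le g_2$ on $\partial\Omega$, rearranging shows the cost of $\gamma$ is $\ge\int_\Omega\varphi\,\dd(f^+-f^-)$. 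If (\ref{g_1g_2}) fails at some $(x_0,y_0)$, adding $n\,\delta_{(x_0,y_0)}$ to a fixed element of $\Pi b(f^+,f^-)$ drives the cost of (KPb) to $-\infty$, while no $\varphi\in\Lip_1$ can satisfy both $\varphi(x_0)\ge g_1(x_0)$ and $\varphi(y_0)\le g_2(y_0)$, so (DPb) is infeasible and both sides equal $-\infty$; from now on I assume (\ref{g_1g_2}), under which Proposition~\ref{existence of a minimizer} makes $\min(\text{KPb})$ finite and attained.

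The crucial reduction is the identity
$$\min(\text{KPb})=\inf\Big\{\mathcal T(f^++\chi^+,f^-+\chi^-)+\int_{\partial\Omega}g_2\,\dd\chi^--\int_{\partial\Omega}g_1\,\dd\chi^+\Big\},$$
the infimum running over $\chi^\pm\in\mathcal M^+(\partial\Omega)$ with $f^+(\bar\Omega)+\chi^+(\partial\Omega)=f^-(\bar\Omega)+\chi^-(\partial\Omega)$. Here ``$\ge$'' holds since any $\gamma\in\Pi b(f^+,f^-)$ transports $f^++\chi^+$ to $f^-+\chi^-$, while ``$\le$'' is the decomposition into (P1)--(P3) sketched above: for admissible $\chi^\pm$ an optimal plan for $(f^++\chi^+,f^-+\chi^-)$ automatically lies in $\Pi b(f^+,f^-)$ and realizes the right-hand value. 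Next I insert the classical formula $\mathcal T(\mu^+,\mu^-)=\max_{\varphi\in\Lip_1}\int_{\bar\Omega}\varphi\,\dd(\mu^+-\mu^-)$; since $(\mu^+-\mu^-)(\bar\Omega)=0$, the functional is insensitive to constants, so $\varphi$ may be confined to the weak-$*$ compact convex set $C:=\{\varphi\in\Lip_1:\varphi(x_0)=0\}$. This presents $\min(\text{KPb})$ as $\inf_{\chi^\pm}\max_{\varphi\in C}F(\chi^\pm,\varphi)$ with $F(\chi^\pm,\varphi)=\int_\Omega\varphi\,\dd(f^+-f^-)+\int_{\partial\Omega}(\varphi-g_1)\,\dd\chi^++\int_{\partial\Omega}(g_2-\varphi)\,\dd\chi^-$, a functional affine in $(\chi^+,\chi^-)$, affine in $\varphi$, and separately continuous; Sion's minimax theorem (using compactness of $C$) lets me swap the $\inf$ and the $\sup$, so that $\min(\text{KPb})=\max_{\varphi\in C}\big[\int_\Omega\varphi\,\dd(f^+-f^-)+h(\varphi)\big]$, where $h(\varphi):=\inf_{\chi^\pm}\big[\int_{\partial\Omega}(\varphi-g_1)\,\dd\chi^++\int_{\partial\Omega}(g_2-\varphi)\,\dd\chi^-\big]$.

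It remains to recognise this maximum as $\sup(\text{DPb})$. Put $\alpha(\varphi):=\min_{\partial\Omega}(\varphi-g_1)$, $\beta(\varphi):=\min_{\partial\Omega}(g_2-\varphi)$ and $m_0:=f^+(\bar\Omega)-f^-(\bar\Omega)$. A direct computation (concentrate the net boundary mass $|m_0|$ at the minimum point of the relevant integrand) gives $h(\varphi)=-\infty$ if $\alpha(\varphi)+\beta(\varphi)<0$, and otherwise $h(\varphi)=m_0\,\beta(\varphi)$ when $m_0\ge0$, $h(\varphi)=-m_0\,\alpha(\varphi)$ when $m_0<0$. Since $\min(\text{KPb})$ is finite, a maximiser $\varphi^*$ exists with $\alpha^*+\beta^*\ge0$, where $\alpha^*=\alpha(\varphi^*)$, $\beta^*=\beta(\varphi^*)$. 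Define $\psi:=\varphi^*+\beta^*$ if $m_0\ge0$ and $\psi:=\varphi^*-\alpha^*$ if $m_0<0$; then $\psi\in\Lip_1$, and on $\partial\Omega$ the bounds $\varphi^*\ge g_1+\alpha^*$, $\varphi^*\le g_2-\beta^*$ together with $\alpha^*+\beta^*\ge0$ force $g_1\le\psi\le g_2$, so $\psi$ is admissible for (DPb); moreover $\int_\Omega\psi\,\dd(f^+-f^-)=\int_\Omega\varphi^*\,\dd(f^+-f^-)+h(\varphi^*)=\min(\text{KPb})$. Hence $\sup(\text{DPb})\ge\min(\text{KPb})$, and together with the first paragraph this proves the equality.

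The main obstacles I anticipate are: (i) making the reduction of (KPb) to the $\mathcal T$-infimum over $\chi^\pm$ fully rigorous --- one must recall that plans may charge $\partial\Omega\times\partial\Omega$ a priori and use Proposition~\ref{existence of a minimizer} (together with the bound on $\chi^\pm(\partial\Omega)$ it provides) to normalise them; (ii) verifying the hypotheses of Sion's minimax theorem (convexity, compactness, the correct one-sided semicontinuity of $F$, and finiteness of the value); and (iii) the sign bookkeeping in the final step, where (\ref{g_1g_2}) re-enters implicitly --- it is precisely the compatibility condition that makes the feasible set of (DPb) nonempty and that renders $\alpha^*+\beta^*\ge0$ attainable at the optimum.
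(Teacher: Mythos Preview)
Your argument is correct and provides a genuine alternative to the paper's proof. The paper does \emph{not} use Sion's minimax theorem; instead it follows the convex-analysis trick of \cite{BouButJEMS}: one defines a perturbation function
\[
H(p,q):=-\sup\Big\{\int_\Omega\varphi\,\dd(f^+-f^-):\varphi\in\Lip_1,\ g_1+p\le\varphi\le g_2-q\ \text{on }\partial\Omega\Big\}
\]
on $C(\partial\Omega)\times C(\partial\Omega)$, checks by hand that $H$ is convex and lower semicontinuous (via Ascoli--Arzel\`a), concludes $H^{\star\star}(0,0)=H(0,0)$, and then computes $H^\star(\chi^+,\chi^-)$ explicitly --- this is where classical Kantorovich duality enters. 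The biconjugate $H^{\star\star}(0,0)$ unwinds to $-\min(\text{KPb})$, while $H(0,0)=-\sup(\text{DPb})$ by definition.

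The two routes share the same core ingredient (classical Kantorovich--Rubinstein duality for balanced measures) but differ in how the boundary measures $\chi^\pm$ are introduced: in the paper they arise as the dual variables to the perturbations $(p,q)$, whereas you introduce them directly via the reduction $\min(\text{KPb})=\inf_{\chi^\pm}\big[\mathcal T(f^++\chi^+,f^-+\chi^-)+\text{taxes}\big]$ and then swap $\inf$ and $\sup$ by Sion. Your approach is arguably more transparent but requires the case split on the sign of $m_0$ and the explicit shift $\psi=\varphi^*+\beta^*$ (or $-\alpha^*$) at the end; the paper's biconjugate approach avoids this endgame entirely at the price of verifying convexity and l.s.c.\ of $H$. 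Regarding your anticipated obstacles: (i) is harmless --- the ``$\le$'' direction of the reduction does not actually need the (P1)--(P3) decomposition, only the observation that any plan in $\Pi(f^++\chi^+,f^-+\chi^-)$ automatically lies in $\Pi b(f^+,f^-)$ because $\chi^\pm$ are supported on $\partial\Omega$; (ii) is fine since $F$ is affine in each variable, continuous in $\varphi$ on the compact set $C$, and weak-$*$ continuous in $(\chi^+,\chi^-)$ (and $F$ is invariant under adding constants to $\varphi$ thanks to the mass-balance constraint, so the restriction to $C$ is legitimate); (iii) works exactly as you describe, noting that $h$ is u.s.c.\ as an infimum of continuous functions, so a maximiser $\varphi^*$ exists on $C$ and finiteness of $\min(\text{KPb})$ forces $h(\varphi^*)>-\infty$, hence $\alpha^*+\beta^*\ge 0$.
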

\begin{proof} 
 For every $ (p,q) \in  C(\partial\Omega)\times C(\partial\Omega),$ set $$ H(p,q):=-\sup\bigg\{\int_\Omega \varphi \mathrm{d}(f^+ - f^-)
\,:\,\varphi\in \Lip_1,\,g_1+p\leq\varphi \leq g_2-q \;\mbox{on}\;\partial\Omega\bigg\}.$$ 
It is easy to see that $H(p,q) \in \mathbb{R} \cup \{+\infty\}$. In addition, we claim that $H$ is convex and l.s.c.
 \\

   For convexity\,: take $t \in (0,1)$ and $(p_0,q_0),\,(p_1,q_1)\in C(\partial\Omega)\times C(\partial\Omega) $
 and let $\varphi_0,\,\varphi_1$ be their optimal potentials. Set $$ p_t=(1-t)p_0+tp_1,\,q_t=(1-t)q_0+tq_1 $$ 
 and $$\varphi_t=(1-t)\varphi_0+t\varphi_1. $$
As $$ g_1+p_0 \leq \varphi_0 \leq g_2 - q_0 \;\;\mbox{and}\;\;g_1+p_1 \leq \varphi_1 \leq g_2 - q_1\;\mbox{on}\; \partial\Omega, $$ 
then $$ g_1+p_t \leq \varphi_t \leq g_2 - q_t \;\mbox{on}\; \partial\Omega.$$ \\
In addition, $
 \varphi_t$ is 1-Lip. Consequently, $\varphi_t$ is admissible in the max defining $-H(p_t,q_t)$ and 
 $$ H(p_t,q_t)\leq -\int_\Omega\varphi_t \mathrm{d}(f^+ -f^-)
 =(1-t)H(p_0,q_0)+tH(p_1,q_1). $$
 \\
 
 %$H\;\mbox{is} \textit{ l.s.c.}$ 
 For semi-continuity$\,$: take $p_n\rightarrow p$ and $q_n \rightarrow q$  uniformly on $\partial\Omega$. Let $(p_{n_k},q_{n_k})_{n_k}$ be a subsequence such that $\liminf_n H(p_n,q_n)=\lim_{n_k}H(p_{n_k},q_{n_k})$ (for simplicity of notation, we still denote this subsequence by $(p_n,q_n)_n$) and let $(\varphi_n)_n$ be
  their corresponding optimal potentials. As $\varphi_n$ is 1-Lip and $(p_n)_n,\,(q_n)_n$ are equibounded, then, by Ascoli-Arzel\`a Theorem, there exist a 1-Lip function $\varphi$ and a subsequence $(\varphi_{n_k})_{n_k}$ such that $
 \varphi_{n_k}\rightarrow\varphi$ uniformly.
   \\
 \\
 As $$ g_1+p_{n_k} \leq \varphi_{n_k} \leq g_2 - q_{n_k} \;\mbox{on}\; \partial\Omega, $$
 then $$ g_1+p \leq \varphi \leq g_2 - q \;\mbox{on}\; \partial\Omega.
 $$ \\
 Consequently, $\varphi$ is admissible in the max defining $ -H(p,q)$ and one has
 $$ H(p,q)\leq -\int_\Omega\varphi \mathrm{d}(f^+ - f^-)
 =\lim_{n_k} H(p_{n_k},q_{n_k})=\liminf_n H(p_n,q_n). $$ \\
 Hence, we get that $ H^{\star\star}=H $ and in particular, $H^{\star \star}(0,0)=H(0,0)$. But by the definition of $H$, we have  $$H(0,0)=-\sup\left\{\int_\Omega \varphi \mathrm{d}(f^+ - f^-)
\,:\,\varphi \in \Lip_1,\,g_1 \leq \varphi \leq g_2\; \mbox{on}\;
\partial\Omega\right\}.$$ 
\\
On the other hand, let us compute $H^{\star\star}(0,0)$. Take $\chi^\pm$ in $\mathcal{M}(\partial\Omega)$, then we have 
$$H^{\star}(\chi^+,\chi^-) =
  \sup_{p,\,q \,\in \,C(\partial\Omega)
}\bigg\{\int_{\partial\Omega} p\mathrm{d}\chi^+ + \int_{\partial\Omega} q\mathrm{d}\chi^--H(p,q)\bigg\}$$
$$=\sup \bigg\{\int_{\partial\Omega} p\mathrm{d}\chi^+ + \int_{\partial\Omega} q\mathrm{d}\chi^- +\int_\Omega \varphi\mathrm{d}(f^+ -f^-)\,:\,p,\,q\in C(\partial\Omega),\,\varphi\in\Lip_1,\\
\,g_1 + p\leq\varphi\leq g_2 - q\; \mbox{on}\;
\partial\Omega\bigg\}.$$ \\
If $\chi^+\notin \mathcal{M}^+(\partial\Omega) $, i.e there exists $p_0 \in C(\partial\Omega)$ 
 such that $p_0\geq 0
$ 
and
$ \int_{\partial\Omega} p_0\mathrm{d}\chi^+ <0, 
$ we may see that 
$$H^{\star}(\chi^+,\chi^-)\geq\;-n\int_{\partial\Omega} p_0\mathrm{d}\chi^+ +\int_{\partial\Omega} g_2\mathrm{d}\chi^- -\int_{\partial\Omega} g_1\mathrm{d}\chi^+\;\underset{n \to + \infty}{\longrightarrow}  \;+\infty$$
and similarly if $\chi^-\notin \mathcal{M}^+(\partial\Omega).
 $\\
\\ 
Suppose $\chi^\pm \in \mathcal{M}^+(\partial\Omega) $. As $g_1 + p\leq\varphi\leq g_2 - q \;\mbox{on}\;\partial\Omega,$ we should choose the largest possible $p$ and $q$, i.e $\,p(x)=\varphi(x)-g_1(x)\,$ and $\,q(y)=g_2(y) - \varphi(y)$ for all $x,\,y \in\partial\Omega$. Hence, we have
$$ H^{\star}(\chi^+,\chi^-)=\sup\bigg\{ \int_{\bar{\Omega}} \varphi\mathrm{d}\left(f+ \chi\right)
 \,:\,\varphi \in \Lip_1 \bigg\}  + \int_{\partial\Omega} g_2\mathrm{d}\chi^- -\int_{\partial\Omega} g_1\mathrm{d}\chi^+ ,$$
 where $f:=f^+ - f^-$ and $\chi:=\chi^+ - \chi^-$. \\
%Notice that if $ (f^+ + \chi^+)(\bar{\Omega}) \neq (f^- + \chi^-)(\bar{\Omega})$ then $$ \sup\left\{ \int_{\bar{\Omega}} \varphi\mathrm{d}(f^+ + \chi^+) +
 %\int_{\bar{\Omega}} \psi\mathrm{d}(f^- + \chi^-),\;\varphi,\;\psi \in C(\bar{\Omega})
%,\;\varphi\oplus\psi\leq c\right\}=+\infty.$$ 

By Theorem 1.14 in \cite{11}, we get
$$ H^{\star}(\chi^+,\chi^-)=\min\left\{\int_{\bar{\Omega}\times\bar{\Omega}}|x-y|\mathrm{d}\gamma
\,:\,\gamma \in \Pi(f^+ + \chi^+,f^-+\chi^-) \right\}
+ \int_{\partial\Omega} g_2\mathrm{d}\chi^- -\int_{\partial\Omega} g_1\mathrm{d}\chi^+ $$
$$=\min\left\{\int_{\bar{\Omega}\times\bar{\Omega}}|x-y|\mathrm{d}\gamma
+ \int_{\partial\Omega} g_2\mathrm{d}(\Pi_{y})_{\#}\gamma
-\int_{\partial\Omega} g_1\mathrm{d}(\Pi_{x})_{\#}\gamma
\,:\,\gamma \in \Pi(f^+ + \chi^+,f^-+\chi^-) \right\}.$$\\
Finally, we have
$$H^{\star\star}(0,0)=\sup\bigg\{-H^{\star}(\chi^+,\chi^-)\,:\,\chi^+,\,\chi^-\in\mathcal{M}^+(\partial\Omega)\bigg\}$$
%\inf\bigg\{\int_{\bar{\Omega}\times\bar{\Omega}}|x-y|\mathrm{d}\gamma\;
%+ \int_{\partial\Omega} g_2\mathrm{d}(\Pi_{y})_{\#}\gamma\;
%-\int_{\partial\Omega} g_1\mathrm{d}(\Pi_{x})_{\#}\gamma
%,\;\gamma \in \Pi(f^+ + \chi^+,f^-+\chi^-) \bigg\},$$
%$$=-\min \bigg\{\int_{\bar{\Omega}\times\bar{\Omega}}|x-y|\mathrm{d}\gamma\,
%+ \int_{\partial\Omega} g_2\mathrm{d}(\Pi_{y})_{\#}\gamma\,
%-\int_{\partial\Omega} g_1\mathrm{d}(\Pi_{x})_{\#}\gamma
%\,:\,\gamma\in\Pi(f^+ + \chi^+,f^-+\chi^-),\,\chi^+,\,\chi^-\in \mathcal{M}^+(\partial\Omega)\bigg\}$$
$$=-\min \left\{\int_{\bar{\Omega}\times\bar{\Omega}}|x-y|\mathrm{d}\gamma+
\int_{\partial\Omega}g_2\mathrm{d}(\Pi_y)_{\#}\gamma-
\int_{\partial\Omega}g_1\mathrm{d}(\Pi_x)_{\#}\gamma\,:\,\gamma\in\Pi b(f^+,f^-)\right\}. \qedhere$$
 %Finally, we get
 %$$\inf\left\{\int_{\bar{\Omega}\times\bar{\Omega}}|x-y|\mathrm{d}\gamma+
%\int_{\partial\Omega}g_2\mathrm{d}(\Pi_{y})_{\#}\gamma-\int_{\partial\Omega}g_1\mathrm{d}(\Pi_{x})_{\#}\gamma,\;\gamma\in\Pi b(f^+,f^-)\right\}$$
 %$$ =\;\sup\left\{\int_\Omega \varphi \mathrm{d}(f^+ - f^-)
%,\;\varphi\in\Lip_1,\;g_1\leq\varphi\leq g_2\; \mbox{on}\;
%\partial\Omega\right\}.$$
\end{proof}
Let $u$ be a maximizer for (DPb). Then, we have the following\,:
\begin{proposition} \label{Maxdual}
The potential $u$ is also a Kantorovich potential for the following problem 
$$ \sup\left\{\int_{\bar{\Omega}} \varphi \mathrm{d}(\mu^+ - \mu^-)
\,:\,\varphi\in\Lip_1\right\} $$
where $\mu^\pm:=f^\pm + \chi^\pm$.
\end{proposition}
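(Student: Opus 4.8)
The plan is to deduce the statement by simply combining the duality identity of Proposition~\ref{duality} with the classical Kantorovich duality applied to the pair $(\mu^+,\mu^-)$. Recall that $\gamma$ is a minimizer of (KPb) whose marginals restricted to $\partial\Omega$ are $\chi^+$ and $\chi^-$, and that it was already observed above that $\gamma\in\Pi(\mu^+,\mu^-)$ is optimal for the plain transport problem $\min\{\int_{\bar{\Omega}\times\bar{\Omega}}|x-y|\,\mathrm{d}\gamma:\gamma\in\Pi(\mu^+,\mu^-)\}$. By Theorem~1.14 in \cite{11} the value of this problem equals $\sup\{\int_{\bar{\Omega}}\varphi\,\mathrm{d}(\mu^+-\mu^-):\varphi\in\Lip_1\}$, so, since $u\in\Lip_1$ is already an admissible competitor, it suffices to prove that $u$ attains this supremum, i.e. that $\int_{\bar{\Omega}}u\,\mathrm{d}(\mu^+-\mu^-)=\int_{\bar{\Omega}\times\bar{\Omega}}|x-y|\,\mathrm{d}\gamma$.

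First I would record two expressions for $\int_\Omega u\,\mathrm{d}(f^+-f^-)$. Since $u$ maximizes (DPb), Proposition~\ref{duality} gives $\int_\Omega u\,\mathrm{d}(f^+-f^-)=\min(\mathrm{KPb})$; and since $\gamma$ realizes this minimum while $f^\pm$ are concentrated on $\Omega^{\circ}$ and $g_1,g_2$ live on $\partial\Omega$ (so that the boundary parts of the marginals of $\gamma$ are exactly $\chi^\pm$), one has $\min(\mathrm{KPb})=\int_{\bar{\Omega}\times\bar{\Omega}}|x-y|\,\mathrm{d}\gamma+\int_{\partial\Omega}g_2\,\mathrm{d}\chi^--\int_{\partial\Omega}g_1\,\mathrm{d}\chi^+$. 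Then, again using that $f^\pm$ charge only $\Omega^{\circ}$, I would split
\begin{align*}
\int_{\bar{\Omega}}u\,\mathrm{d}(\mu^+-\mu^-)
&=\int_\Omega u\,\mathrm{d}(f^+-f^-)+\int_{\partial\Omega}u\,\mathrm{d}\chi^+-\int_{\partial\Omega}u\,\mathrm{d}\chi^-\\
&=\int_{\bar{\Omega}\times\bar{\Omega}}|x-y|\,\mathrm{d}\gamma+\int_{\partial\Omega}(u-g_1)\,\mathrm{d}\chi^++\int_{\partial\Omega}(g_2-u)\,\mathrm{d}\chi^-.
\end{align*}

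To conclude, I would use the boundary constraint $g_1\le u\le g_2$ on $\partial\Omega$ together with $\chi^\pm\geq 0$ to see that the last two integrals are nonnegative, whence $\int_{\bar{\Omega}}u\,\mathrm{d}(\mu^+-\mu^-)\geq\int_{\bar{\Omega}\times\bar{\Omega}}|x-y|\,\mathrm{d}\gamma$; on the other hand $u\in\Lip_1$ is admissible in the dual of the transport problem for $(\mu^+,\mu^-)$, so $\int_{\bar{\Omega}}u\,\mathrm{d}(\mu^+-\mu^-)\leq\int_{\bar{\Omega}\times\bar{\Omega}}|x-y|\,\mathrm{d}\gamma$ by the first paragraph. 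Hence equality holds, $u$ attains the supremum, and it is therefore a Kantorovich potential for $(\mu^+,\mu^-)$; as a byproduct the two boundary integrals vanish, which records the complementary slackness relations $u=g_1$ $\chi^+$-a.e. on $\partial\Omega$ and $u=g_2$ $\chi^-$-a.e. on $\partial\Omega$.

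I do not expect a serious obstacle: the argument is essentially bookkeeping once Proposition~\ref{duality} is available. The only point requiring a little care is the measure-theoretic accounting — keeping track of the fact that $f^+,f^-$ are supported in the open set $\Omega^{\circ}$, so that integrals over $\Omega$ and over $\bar{\Omega}$ against them coincide and the restrictions to $\partial\Omega$ of $(\Pi_x)_{\#}\gamma$ and $(\Pi_y)_{\#}\gamma$ are precisely $\chi^+$ and $\chi^-$ — together with making sure the classical duality $\min=\max$ is invoked for the transport problem with the boundary-augmented marginals $\mu^\pm$, for which the optimality of $\gamma$ has already been established.
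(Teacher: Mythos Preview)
Your proof is correct and follows essentially the same approach as the paper: both combine Proposition~\ref{duality} with classical Kantorovich duality (Theorem~1.14 in \cite{11}) and use the constraint $g_1\le u\le g_2$ on $\partial\Omega$ together with $\chi^\pm\ge 0$ to squeeze $\int_{\bar\Omega}u\,\mathrm{d}(\mu^+-\mu^-)$ between $\int|x-y|\,\mathrm{d}\gamma$ and itself. Your write-up is a bit more explicit and additionally records the complementary slackness relations $u=g_1$ $\chi^+$-a.e.\ and $u=g_2$ $\chi^-$-a.e., which the paper's proof implies but does not state.
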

\begin{proof}
Let $v$ be a Kantorovich potential for this dual problem.
%$$ \sup\left\{\int_{\bar{\Omega}} \varphi \mathrm{d}(\mu^+ - \mu^-)
%,\;\varphi\in\Lip_1\right\}.$$ 
Then, we have $$\int_\Omega u \mathrm{d}(f^+ - f^-) + \int_{\partial\Omega}g_1\mathrm{d}\chi^+ - \int_{\partial\Omega}g_2\mathrm{d}\chi^-\leq \int_{\bar{\Omega}} u \mathrm{d}(\mu^+ - \mu^-) \leq \int_{\bar{\Omega}} v \mathrm{d}(\mu^+ - \mu^-).$$
By Proposition \ref{duality} and the fact that (see Theorem 1.14 in \cite{11})$$ \sup\left\{\int_{\bar{\Omega}} \varphi \mathrm{d}(\mu^+ - \mu^-)
\,:\,\varphi\in\Lip_1\right\}=\min\left\{\int_{\bar{\Omega}\times\bar{\Omega}}|x-y|
 \mathrm{d}\gamma\,:\,\gamma\in\Pi(\mu^+,\mu^-)\right\},$$ we infer that these inequalities are in fact equalities and $u$ is a Kantorovich potential for this dual problem.$\qedhere$ \\
\end{proof}
Suppose that $\Omega$ is convex and set 
$W:=-\sigma \nabla u,$
where we recall that $\sigma$ is the transport density associated with the optimal transport plan $\gamma$. Then, from Proposition \ref{Maxdual} and the fact that 
$\min \mbox{(BPb)}=\min \mbox{(KPb)},$ we can conclude that $W$ 
%$$\min \left\{\int_{\bar{\Omega}\times\bar{\Omega}} |x-y|\mathrm{d}\gamma\,:\,\gamma\in\Pi(\mu^+,\mu^-)\right\}=\min\left\{|W|(\bar{\Omega})\,:\,W \in \mathcal{M}^d(\bar{\Omega}),\,\nabla\cdot W=\mu^+-\mu^-\right\}.$$\\
%it follows directly that $W_{\gamma}$ 
and $\chi$ solve together (BPb). 
%$$\min \left\{\int_{\bar{\Omega}\times\bar{\Omega}} |x-y|\mathrm{d}\gamma +\int_{\partial\Omega}g_2\mathrm{d}(\Pi_{y})_{\#}\gamma - \int_{\partial\Omega}g_1\mathrm{d}(\Pi_{x})_{\#}\gamma\,:\,\gamma\in\Pi b (f^{+},f^{-})\right\}$$
%$$=\min\left\{|W|(\bar{\Omega})+\int_{\partial\Omega}g_2\mathrm{d}\nu^- - \int_{\partial\Omega}g_1\mathrm{d}\nu^+\,:\,W \in\mathcal{M}^d(\bar{\Omega}),\,\nu \in\mathcal{M}(\partial\Omega),\,\nabla\cdot W=f-\nu \right\}.$$
%The second statement of this proposition follows directly from Proposition \ref{prop. 7.6}.
%\end{proof}
Moreover, the same result will be true, even if $\Omega$ is not convex, by using the following\,: 
%Notice that the convexity of $\Omega$ is necessary just to define the transport density $\sigma$ (see \ref{DefTrans}). But
%In the following proposition, we will show that under the assumption that $g_1=g_2$, we can also use \ref{DefTrans} to define $\sigma$, even if $\Omega$ is not convex.
\begin{proposition} \label{support} Suppose that $$|g_1(x)-g_2(y)| \leq |x-y|\; \;\mbox{for all}\,\;(x,y)\in\partial\Omega \times \partial\Omega, $$
 i.e. $g_1=g_2:=g$ and $g$ is 1-Lip. Then there exists a minimizer $\gamma^{\star}$ for (KPb) such that  for all $(x,y) \in \spt(\gamma^{\star})$, we have $[x,y] \subset \bar{\Omega}.$ In addition, if $g$ is $\lambda$-Lip with $ \lambda< 1$, then for any minimizer $\gamma$ of (KPb) and for all $(x,y)\in\spt(\gamma),\;[x,y] \subset \bar{\Omega}.$
 \end{proposition}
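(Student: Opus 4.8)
I would prove this by a rerouting (``surgery'') argument on optimal plans. The idea is: whenever an optimal $\gamma$ moves a bit of mass along a segment $[x,y]$ that leaves $\bar{\Omega}$, I re-route that mass through the two boundary points where the segment \emph{first} exits and \emph{last} re-enters $\bar{\Omega}$, exploiting that $\partial\Omega$ is a free reserve. Since $g=g_1=g_2$ is $1$-Lipschitz this does not raise the cost, and thanks to the ``first exit / last re-entry'' choice the two new pieces of transport no longer leave $\bar{\Omega}$, so a single pass produces the minimizer $\gamma^\star$ of the first statement; when $g$ is $\lambda$-Lipschitz with $\lambda<1$ the rerouting is strictly cheaper, forcing every optimal plan to stay inside $\bar{\Omega}$.

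First I would set $E:=\{(x,y)\in\bar{\Omega}\times\bar{\Omega}:[x,y]\not\subset\bar{\Omega}\}$ and observe that it is relatively open in $\bar{\Omega}\times\bar{\Omega}$, since $E^{c}=\bigcap_{t\in[0,1]}\{(x,y):(1-t)x+ty\in\bar{\Omega}\}$ is an intersection of closed sets. For $(x,y)\in E$ write $\omega_{x,y}(t)=(1-t)x+ty$; then $O_{x,y}:=\{t\in[0,1]:\omega_{x,y}(t)\notin\bar{\Omega}\}$ is nonempty and relatively open, hence not a single point, so $t_{1}:=\inf O_{x,y}<\sup O_{x,y}=:s_{2}$. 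Put $z_{1}:=\omega_{x,y}(t_{1})$ and $z_{2}:=\omega_{x,y}(s_{2})$. Because $\bar{\Omega}$ is closed I would check that $z_{1},z_{2}\in\partial\Omega$, that $z_{1}\neq z_{2}$, and --- the key geometric point --- that $[x,z_{1}]=\omega_{x,y}([0,t_{1}])\subset\bar{\Omega}$ and $[z_{2},y]=\omega_{x,y}([s_{2},1])\subset\bar{\Omega}$, so that $(x,z_1),(z_2,y)\in E^{c}$. A short argument (reducing the relevant conditions to rational $t$) shows $t_{1}$ is upper semicontinuous and $s_{2}$ lower semicontinuous on $E$, so $(x,y)\mapsto(z_{1},z_{2})$ is Borel, and hence so are $R_{1}(x,y):=(x,z_{1})$ and $R_{2}(x,y):=(z_{2},y)$.

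Then, given any minimizer $\gamma$ of (KPb) (one exists by Proposition~\ref{existence of a minimizer}), I would set
$$\gamma^{\star}:=\gamma\, 1_{E^{c}}+(R_{1})_{\#}\big(\gamma\, 1_{E}\big)+(R_{2})_{\#}\big(\gamma\, 1_{E}\big).$$
Since $\Pi_{x}\circ R_{1}=\Pi_{x}$ and $\Pi_{y}\circ R_{2}=\Pi_{y}$, while $\Pi_{x}\circ R_{2}$ and $\Pi_{y}\circ R_{1}$ take values in $\partial\Omega$, the marginals of $\gamma^{\star}$ differ from those of $\gamma$ only by measures carried by $\partial\Omega$, so $\gamma^{\star}\in\Pi b(f^{+},f^{-})$. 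Using collinearity, $|x-z_{1}|+|z_{1}-z_{2}|+|z_{2}-y|=|x-y|$, and $g_{1}=g_{2}=g$, a direct bookkeeping of the three terms defining $K$ gives
$$K(\gamma^{\star})-K(\gamma)=\int_{E}\big(g(z_{1})-g(z_{2})-|z_{1}-z_{2}|\big)\, \dd\gamma\ \leq\ (\lambda-1)\int_{E}|z_{1}-z_{2}|\, \dd\gamma\ \leq\ 0,$$
where $\lambda$ is the Lipschitz constant of $g$. Hence $\gamma^{\star}$ is again a minimizer of (KPb). Moreover $R_{1}(E)\cup R_{2}(E)\subset E^{c}$ forces $\gamma^{\star}(E)=0$, and since $E$ is relatively open we get $\spt(\gamma^{\star})\subset E^{c}$, i.e. $[x,y]\subset\bar{\Omega}$ for all $(x,y)\in\spt(\gamma^{\star})$; this proves the first assertion. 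For the second, if $\lambda<1$ and a minimizer $\gamma$ had $\gamma(E)>0$, then (as $|z_{1}-z_{2}|>0$ on $E$) the displayed inequality yields $K(\gamma^{\star})<K(\gamma)$, contradicting optimality; therefore $\gamma(E)=0$ and $\spt(\gamma)\subset E^{c}$ for every minimizer $\gamma$.

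The step I expect to be the main obstacle is the pair of checks in the second paragraph: the Borel-measurable dependence of the ``first exit'' and ``last re-entry'' points on $(x,y)\in E$, and --- more importantly --- the fact that the sub-segments $[x,z_{1}]$ and $[z_{2},y]$ they cut out lie in $\bar{\Omega}$, since this is precisely what makes one rerouting suffice (so that $\gamma^\star$, not some limit of iterates, already works). The remaining ingredients --- the marginal computation and the cost identity --- are routine.
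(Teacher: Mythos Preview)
Your proposal is correct and follows essentially the same rerouting (``surgery'') argument as the paper: restrict an optimal $\gamma$ to the good set, and on the bad set push it forward through the two maps $(x,y)\mapsto(x,z_1)$ and $(x,y)\mapsto(z_2,y)$, then compare costs via $g(z_1)-g(z_2)\le \lambda|z_1-z_2|$ and collinearity. Your write-up is in fact more careful than the paper's on two points the paper leaves implicit: the Borel measurability of the first-exit/last-re-entry maps, and the verification that $[x,z_1],[z_2,y]\subset\bar{\Omega}$ (so that a single surgery suffices and $\gamma^\star(E)=0$, hence $\spt(\gamma^\star)\subset E^c$ by openness of $E$).
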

 \begin{proof} Let $\gamma$ be a minimizer for (KPb) and set
  $$E:=\{(x,y) \in \bar{\Omega}\times \bar{\Omega},\,[x,y]\subset \bar{\Omega}\},$$ 
   $$h_1:\bar{\Omega}\times \bar{\Omega}\mapsto \bar{\Omega}\times \partial\Omega$$ $$ (x,y) \mapsto (x,{y}^\prime)$$ where ${y}^\prime$ is the first point of intersection between the segment $[x,y]$ and the boundary if $(x,y) \notin E$ and ${y}^\prime =y$  else.\\
   
Also set $$h_2: \bar{\Omega}\times \bar{\Omega} \mapsto \partial\Omega \times \bar{\Omega}$$
 $$ (x,y) \mapsto ({x}^\prime,y)$$ where ${x}^\prime $ is the last point of intersection between the segment $[x,y]$ and the boundary if $(x,y) \notin E$ and ${x}^\prime =x$  else.\\
\\
 Set $$\gamma^{\star}:=\gamma.1_E + (h_1)_{\#}(\gamma.1_{E^c}) + (h_2)_{\#}(\gamma.1_{E^c}).$$\\
 It is clear that $\gamma^{\star} \in \Pi b(f^+,f^-)$. In addition, we have
 %$$\int_{\bar{\Omega}\times\bar{\Omega}} |x-y|\mathrm{d}\gamma^{\star}+\int_{\partial{\Omega}}g_2(y)\mathrm{d}(\Pi_{y})_{\#}\gamma^{\star} -\int_{\partial{\Omega}}g_1(x)\mathrm{d}(\Pi_{x})_{\#}\gamma^{\star}$$
 %$$=\int_{E} |x-y|\mathrm{d}\gamma+\int_{E^c} |x-{y}^\prime|\mathrm{d}\gamma+\int_{E^c} |{x}^\prime-y|\mathrm{d}\gamma+\int_{\partial{\Omega}}g_2(y)\mathrm{d}(\Pi_{y})_{\#}(\gamma.1_E) $$
 %$$+\int_{\partial{\Omega}}g_2(y)\mathrm{d}(\Pi_{y})_{\#}((h_1)_{\#}(\gamma.1_{E^c})) +\int_{\partial{\Omega}}g_2(y)\mathrm{d}(\Pi_{y})_{\#}((h_2)_{\#}(\gamma.1_{E^c}))$$
 %$$-\int_{\partial{\Omega}}g_1(x)\mathrm{d}(\Pi_{x})_{\#}(\gamma.1_E)-\int_{\partial{\Omega}}g_1(x)\mathrm{d}(\Pi_{x})_{\#}((h_1)_{\#}(\gamma.1_{E^c})) -\int_{\partial{\Omega}}g_1(x)\mathrm{d}(\Pi_{x})_{\#}((h_2)_{\#}(\gamma.1_{E^c})).$$ \\ \\
 %As $\;\Pi_y \circ h_2=\Pi_y\;$ and $\;\Pi_x \circ h_1=\Pi_x$, then
  $$\int_{\bar{\Omega}\times\bar{\Omega}} |x-y|\mathrm{d}\gamma^{\star}+\int_{\partial{\Omega}}g(y)\mathrm{d}(\Pi_{y})_{\#}\gamma^{\star} -\int_{\partial{\Omega}}g(x)\mathrm{d}(\Pi_{x})_{\#}\gamma^{\star}$$
 %$$=\int_{E} |x-y|\mathrm{d}\gamma+\int_{E^c} |x-{y}^\prime|\mathrm{d}\gamma+\int_{E^c} |{x}^\prime-y|\mathrm{d}\gamma+\int_{\partial{\Omega}}g_2(y)\mathrm{d}(\Pi_{y})_{\#}(\gamma.1_E)+\int_{E^c}g_2({y}^\prime)\mathrm{d}\gamma $$
 %$$+\int_{\partial{\Omega}}g_2(y)\mathrm{d}(\Pi_{y})_{\#}(\gamma.1_{E^c})-\int_{E^c}g_1({x}^\prime)\mathrm{d}\gamma -\int_{\partial{\Omega}}g_1(x)\mathrm{d}(\Pi_{x})_{\#}(\gamma.1_E)-\int_{\partial{\Omega}}g_1(x)\mathrm{d}(\Pi_{x})_{\#}(\gamma.1_{E^c})$$
 
 $$ =\int_{E} |x-y|\mathrm{d}\gamma+\int_{E^c} (|x-{y}^\prime| + |{x}^\prime-y| + g({y}^\prime) - g({x}^\prime)) \mathrm{d}\gamma +\int_{\partial{\Omega}}g(y)\mathrm{d}(\Pi_{y})_{\#}\gamma
 -\int_{\partial{\Omega}}g(x)\mathrm{d}(\Pi_{x})_{\#}\gamma .$$ 
 \\ Yet, 
  $$|x-{y}^\prime| + |{x}^\prime-y| + g({y}^\prime) - g({x}^\prime) \leq |x-{y}^\prime| + |{x}^\prime-y| + |{x}^\prime-{y}^\prime|=|x-y|.$$ 
    \\
Consequently, $\gamma^{\star}$ is a minimizer for (KPb) and for all $(x,y) \in \spt(\gamma^{\star})$, we have $[x,y] \subset \bar{\Omega}.$ The second statement follows directly from the last inequality, which becomes strict.$\qedhere$
  \end{proof}
%From the above proposition, it is easy to observe that if $g$ is $\lambda$-Lip with $ \lambda< 1$, then for any minimizer $\gamma$ of (KPb) and for all $(x,y)\in\spt(\gamma),\;[x,y] \subset \bar{\Omega}.$
%\begin{remark}
%Suppose that $\,g_1=g_2:=g\,$ and $\,g$ is 1-Lip.
%then there exist $\,W$ and $\,\nu$ which solve together the following problem
%By Theorem 4.2.1 in \cite{8}, we have always the following inequality $$\min \mbox{(BP)} \geq \min \mbox{(KP)}.$$
%Then by Proposition \ref{support}, we can prove, exactly as in Proposition \ref{BPNew}, that the following problem
 %$$\min\left\{\int_{\Omega}|W|\mathrm{d}x+\int_{\partial\Omega}g\mathrm{d}\nu\,:\,W \in \mathcal{M}^d(\bar{\Omega}),\,\nu \in \mathcal{M}(\partial\Omega),\,\nabla\cdot W=f-\nu \right\}.\quad\mbox{(BPb)} $$
 %reaches a minimum and we have  
 %$$ \min \mbox{(BPb)}=\sup \mbox{(DPb)}=\min \mbox{(KPb)}.$$
 %\begin{remark}
%It is enough to study the $L^p$ summability of $\sigma$ to get the summability of $W^{\star}$ (which is a minimizer for (BPb)).
%\end{remark}
%Moreover, $W \in L^p(\Omega,\mathbb{R}^d)$ provided that $f \in L^p(\Omega)$ and $g_i$ is $\lambda_i$-Lipschitz with $\lambda_i < 1$ and $C^{1,1}(\bar{\Omega})$ for $i=1,2$.
%\end{remark}
 \section{$L^p$ summability on the transport density} \label{2}
In this section, we will study the $L^p$ summability of the transport density $\sigma_{ib}$, under the assumption that $\,\Omega\,$ satisfies a uniform exterior ball and by supposing that $\,g_2$ is $\lambda$-Lipschitz with $\lambda < 1$ and $C^{1,1}(\bar{\Omega})$. First, we will suppose that $\,\Omega\,$ has a very particular shape, i.e. its boundary is composed of parts of a sphere of radius $r$ (such domains are called $\textit{round polyhedra}$), and then, by an approximation argument, we are able to generalize the result to any domain having a uniform exterior ball.\\

To do that, let us consider the following transport problem

$$\min \left\{\int_{\bar{\Omega}\times\bar{\Omega}} |x-y|\mathrm{d}\gamma+\int_{\partial{\Omega}}g\mathrm{d}(\Pi_{y})_{\#}\gamma
\,:\,\gamma \in \mathcal{M}^+(\bar{\Omega}\times\bar{\Omega})
,\,(\Pi_{x})_{\#}\gamma=f,\,\spt((\Pi_{y})_{\#}\gamma) \subset \partial\Omega \right\}.\quad(\mbox{P2})$$ 
\\
Suppose that $g$ is $\lambda$-Lipschitz with $\lambda < 1$ and set $$ T(x)=\mbox{argmin}\left\{|x-y|+g(y),\,y \in \partial\Omega\right\} \; \mbox{for all} \;x \in \bar{\Omega}.$$
Then, we have the following\,:
\begin{proposition} \label{Prop 2.1}
$T(x)$ is a singleton Lebesgue-almost everywhere.
\end{proposition}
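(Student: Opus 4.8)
The plan is to show that the map $x \mapsto |x-y| + g(y)$ achieves its minimum over $y \in \partial\Omega$ at a unique point for $\mathcal{L}^d$-a.e.\ $x \in \bar\Omega$, by exhibiting a function whose differentiability governs uniqueness of the minimizer and then invoking Rademacher's theorem. Concretely, I would introduce the value function
$$
\phi(x) := \min\bigl\{\,|x-y| + g(y)\ :\ y \in \partial\Omega\,\bigr\}, \qquad x \in \bar\Omega,
$$
and observe first that $\phi$ is well-defined and finite (the minimum is attained because $\partial\Omega$ is compact and $y \mapsto |x-y|+g(y)$ is continuous), and that $\phi$ is $1$-Lipschitz on $\bar\Omega$, being an infimum of the $1$-Lipschitz functions $x \mapsto |x-y| + g(y)$. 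By Rademacher's theorem, $\phi$ is differentiable at $\mathcal{L}^d$-a.e.\ $x$; I would show that at every point of differentiability the argmin $T(x)$ is a singleton.

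The key step is the following: suppose $y_1, y_2 \in \partial\Omega$ both attain the minimum defining $\phi(x_0)$, with $x_0 \in \Omega^\circ$ a point of differentiability of $\phi$ (note $x_0 \notin \partial\Omega$ for a.e.\ $x_0$ since $\partial\Omega$ is Lebesgue-null; also if $x_0 = y_i$ then $\phi(x_0) = g(x_0)$, and I would handle this degenerate coincidence separately or simply note it happens on a null set too, since it forces $x_0 \in \partial\Omega$). Then for each $i$ the function $x \mapsto |x - y_i| + g(y_i)$ touches $\phi$ from above at $x_0$, so it is differentiable there with the same gradient, giving $\nabla\phi(x_0) = \tfrac{x_0 - y_i}{|x_0 - y_i|}$ for $i = 1,2$. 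Hence $\tfrac{x_0-y_1}{|x_0-y_1|} = \tfrac{x_0-y_2}{|x_0-y_2|}$, i.e.\ $y_1$ and $y_2$ lie on the same ray emanating from $x_0$. It remains to rule out two distinct boundary points on that ray both being minimizers: if $y_1, y_2$ are on the ray from $x_0$ with, say, $|x_0 - y_1| < |x_0 - y_2|$, then $|x_0 - y_2| = |x_0 - y_1| + |y_1 - y_2|$, so equality of the two costs $|x_0-y_1| + g(y_1) = |x_0-y_2| + g(y_2)$ forces $g(y_1) - g(y_2) = |y_1 - y_2|$, contradicting that $g$ is $\lambda$-Lipschitz with $\lambda < 1$ (as $y_1 \neq y_2$). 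Therefore $T(x_0)$ is a singleton at every point of differentiability of $\phi$ in $\Omega^\circ$, and such points are of full measure.

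The main obstacle I anticipate is not the ray argument (which is elementary given $\lambda < 1$) but the careful verification that "the $1$-Lipschitz function $x \mapsto |x-y_i|+g(y_i)$ touches $\phi$ from above at $x_0$ and $\phi$ is differentiable at $x_0$" genuinely forces the two gradients to coincide. This is standard — a differentiable function minus a function that lies above it and agrees at a point has a local max there, hence zero gradient, provided that upper function is itself differentiable at $x_0$; here $x \mapsto |x - y_i|$ is smooth near $x_0$ precisely because $x_0 \neq y_i$, which is why the case $x_0 = y_i$ (equivalently $x_0 \in \partial\Omega$) must be excised, and it is excised for free since $\mathcal{L}^d(\partial\Omega) = 0$. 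A secondary point worth a line is measurability/selection: once $T(x)$ is single-valued a.e., one should note it coincides a.e.\ with a Borel map (e.g.\ $x \mapsto x - |\nabla\phi(x)|^{-1}\,(\text{something})$ — more simply, the a.e.-defined gradient determines $y$ via $y = x - |x-T(x)|\,\nabla\phi(x)$ together with $|x-T(x)| = $ the solution of a scalar equation), but for the statement as phrased — that $T(x)$ is a singleton a.e. — the argument above suffices.
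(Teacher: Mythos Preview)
Your proposal is correct and follows essentially the same approach as the paper: introduce the value function $\phi(x)=\min_{y\in\partial\Omega}\{|x-y|+g(y)\}$, use its $1$-Lipschitz regularity and Rademacher to get differentiability a.e., deduce at a differentiability point $x_0\in\accentset{\circ}{\Omega}$ that any minimizer $y$ satisfies $\nabla\phi(x_0)=(x_0-y)/|x_0-y|$, and then use the $\lambda$-Lipschitz hypothesis with $\lambda<1$ to rule out two distinct minimizers on the same ray. The paper also remarks that when $\Omega$ is convex the final step is unnecessary (a half-line from an interior point meets $\partial\Omega$ only once), which you could add as a side note.
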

\begin{proof}
Set $$f(x)=\mathrm{min}\{|x-y|+g(y),\,y \in \partial\Omega\}.$$ 
\\
It is clear that $f$ is $1$-Lip, therefore it is differentiable Lebesgue-almost everywhere. Let $x_0$ be in $\,\accentset{\circ}{\Omega}\,$ and suppose that there exist $\,y_0\,$ and $\,y_1 \in \partial\Omega$ such that \\
 $$f(x_0)=|x_0-y_0|+g(y_0)=|x_0-y_1|+g(y_1).$$
As $$f(x) -|x-y_0| \leq  g(y_0)\;\,\mbox{for all}\; x \in \bar{\Omega},$$ \\
 then the function: $ x\mapsto f(x) -|x-y_0| $
reaches a maximum at $x_0$. Hence, if it is 
 differentiable at $x_0 \in \accentset{\circ}{\Omega},$ then 
$\nabla f(x_0)=\frac{x_0-y_0}{|x_0-y_0|}$. In the same way, we get $\nabla f(x_0)=\frac{x_0-y_1}{|x_0-y_1|}$. \\

Hence, we have $\frac{x_0-y_0}{|x_0-y_0|}=\frac{x_0-y_1}{|x_0-y_1|}$, which is a contradiction as $y_1$ is in the half line with vertex $x_0$ and passing through $y_0$, indeed in this case, one has \begin{eqnarray*}
|y_0-y_1| &=& \left|\; \;|x_0-y_0|-|x_0-y_1|\;\;\right| \\
&=& |g(y_0)-g(y_1)| \\
& \leq & \lambda |y_0-y_1|.
\end{eqnarray*}

Note that if $\,\Omega\,$ is convex, we can prove the same result without using the fact that $g$ is $\lambda$-Lip with $\lambda <1$, indeed a half line with vertex in the interior of $\Omega$ cannot intersect $\partial\Omega$ at two different points. $\qedhere$
\end{proof} 
%We denote by $S$ the set of all the points $x \in \bar{\Omega}\,$ where $\,T(x)$ is a singleton. 
%Then by Proposition \ref{Prop 2.1}, we have $|\bar{\Omega} \backslash S|=0.$  
\begin{proposition}\label{Prop 2.2}
If $x \in \Omega$ and $y\in T(x)$, then $(x,y)\cap \partial\Omega=\emptyset.$
  \end{proposition}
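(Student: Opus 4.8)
The plan is to argue by contradiction, exploiting the fact that any boundary point lying on the segment $[x,y]$ is itself a competitor in the minimization defining
$$f(x):=\min\{|x-w|+g(w):\,w\in\partial\Omega\}.$$
Suppose there were a point $z\in(x,y)\cap\partial\Omega$. Since $x\in\Omega$ while $y\in\partial\Omega$ (recall $T(x)\subset\partial\Omega$), we may assume $x\neq y$, otherwise $(x,y)=\emptyset$ and there is nothing to prove; in particular $z$ is then a genuine interior point of the segment, so $z\neq y$.

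Next I would record the two elementary relations that drive the argument. First, since $y\in T(x)$, one has $f(x)=|x-y|+g(y)$. Second, collinearity of $x$, $z$, $y$ gives $|x-y|=|x-z|+|z-y|$, while the fact that $z\in\partial\Omega$ is admissible in the minimum defining $f(x)$ yields $f(x)\le|x-z|+g(z)$. Combining these three facts gives
$$|x-z|+|z-y|+g(y)\ \le\ |x-z|+g(z),$$
hence $|z-y|\le g(z)-g(y)$.

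Finally I would invoke the hypothesis that $g$ is $\lambda$-Lipschitz with $\lambda<1$: the previous inequality becomes $|z-y|\le g(z)-g(y)\le|g(z)-g(y)|\le\lambda\,|z-y|$, which forces $|z-y|=0$, i.e. $z=y$, contradicting $z\in(x,y)$ with $x\neq y$. This settles the proposition. The argument has essentially no obstacle: the only point requiring a word of care is the harmless reduction to $x\neq y$ (and the observation that a point of the open segment differs from the endpoint $y$); the rest is the triangle equality along the segment together with the strict Lipschitz constant. As in Proposition \ref{Prop 2.1}, note that convexity of $\Omega$ would allow one to drop the assumption $\lambda<1$, since in that case the half-line issued from the interior point $x$ through $y$ meets $\partial\Omega$ only at $y$.
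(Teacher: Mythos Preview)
Your proof is correct and follows the same approach as the paper: argue by contradiction, use the optimality of $y$ against the competitor $z\in\partial\Omega$, exploit collinearity to get $|z-y|\le g(z)-g(y)$, and conclude via the $\lambda$-Lipschitz assumption with $\lambda<1$. The only difference is cosmetic---you make the reduction to $x\neq y$ and $z\neq y$ explicit (the paper leaves this implicit since $x\in\Omega$ and $y\in\partial\Omega$ already force $x\neq y$), and you add the same convexity remark that the paper makes after Proposition~\ref{Prop 2.1}.
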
 
  \begin{proof} Suppose that this is not the case, i.e there exist $x \in \Omega,\,y\in T(x)$ and some point $z \in (x,y) \cap \partial\Omega$. By definition of $T$, we have 
$$ |x-y|+ g(y) \leq |x-z| + g(z). $$
Then $$ |z -y|=|x-y|-|x-z| \leq g(z)-g(y) \leq \lambda |z-y|,$$
which is a contradiction. $\qedhere$
  \end{proof}
  Let $S$ be the set of all the points $x \in \bar{\Omega}\,$ where $\,T(x)$ is a singleton. Then, we have the following\,:
   \begin{proposition} \label{Prop 2.3}
   If $x \in S$ and $y \in [x,T(x)]$, then $y \in S$ and $\,T(y)=T(x)$.\end{proposition}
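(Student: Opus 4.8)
The plan is to exploit the defining minimality property of $T$ together with the triangle inequality along the segment $[x,T(x)]$. Write $z = T(x)$, so that by definition $|x-z| + g(z) \le |x-w| + g(w)$ for every $w \in \partial\Omega$. Fix $y \in [x,z]$; by Proposition \ref{Prop 2.2} the open segment $(x,z)$ does not meet $\partial\Omega$, so $y$ is either an interior point (if $y \ne z$) or equals $z \in \partial\Omega$. I first claim $z \in T(y)$, i.e. $z$ is a minimizer of $w \mapsto |y-w| + g(w)$ over $\partial\Omega$. Indeed, for any $w \in \partial\Omega$, since $y$ lies on the segment from $x$ to $z$ we have $|x-z| = |x-y| + |y-z|$, hence
\begin{equation*}
|y-z| + g(z) = |x-z| - |x-y| + g(z) \le |x-w| + g(w) - |x-y| \le |y-w| + g(w),
\end{equation*}
where the last step uses $|x-w| \le |x-y| + |y-w|$. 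This shows $z \in T(y)$.

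Next I must upgrade this to $T(y) = \{z\}$ and $y \in S$. Suppose $z' \in T(y)$ is another minimizer for $y$. Then from the first inequality in the display above, equality must hold throughout; in particular $|x-z'| + g(z') = |x-z| + g(z)$ (using $|y-z'|+g(z') = |y-z|+g(z)$ and running the chain backwards), so $z'$ is also a minimizer for $x$. Since $x \in S$, $T(x)$ is a singleton, forcing $z' = z$. Hence $T(y) = \{z\} = T(x)$, and in particular $y \in S$. The only point needing a touch of care is the case $y = z \in \partial\Omega$: here one checks directly that $g(z) \le |z-w| + g(w)$ for all $w\in\partial\Omega$, which follows by letting $y \to z$ in the inequality just proved, or simply from the $\lambda$-Lipschitz bound $g(z) - g(w) \le \lambda|z-w| \le |z-w|$; so $T(z) = \{z\}$ and again $T(y) = T(x)$.

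I do not expect a serious obstacle here: the argument is essentially the standard "geodesic restriction" fact that an optimal ray stays optimal from every intermediate point, combined with the already-established fact (Proposition \ref{Prop 2.2}) that the segment avoids the boundary so that the points $y$ are genuinely in $\Omega$ (or equal to the boundary endpoint). The one place to be careful is making sure the chain of inequalities is reversible to deduce that any minimizer for $y$ is a minimizer for $x$ — this uses $|x-z| = |x-y|+|y-z|$, which is exactly collinearity of $x,y,z$, valid since $y\in[x,z]$. So the main (minor) subtlety is just bookkeeping the equality cases in the triangle inequality rather than any genuine difficulty.
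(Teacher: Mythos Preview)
Your proof is correct and follows essentially the same route as the paper: both exploit collinearity ($|x-T(x)|=|x-y|+|y-T(x)|$) together with the triangle inequality and the minimality of $T(x)$ for $x$. The only difference is organizational: the paper does it in a single chain using directly the \emph{strict} inequality $|x-T(x)|+g(T(x)) < |x-z|+g(z)$ for $z\neq T(x)$ (which is exactly what $x\in S$ means), whereas you first prove $T(x)\in T(y)$ with non-strict inequalities and then recover uniqueness by the reversibility argument. Your two-step version is slightly longer but perfectly valid; the paper's one-shot strict inequality is a minor streamlining you might adopt.
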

   \begin{proof} For every $z \neq T(x) \in \partial\Omega$, we have
 \begin{eqnarray*}
  |y-T(x)|+g(T(x))&=&|x-T(x)|-|x-y|+g(T(x)) \\
  & < & |x-z|+g(z)-|x-y| \\
  & \leq &|y-z|+g(z). \qedhere
 \end{eqnarray*} 
\end{proof} 
We observe that if $x \in S$, then the image of $y$ and $x$ through $T$ is the same. This is a well-known principle in optimal transport with distance cost, as $y$ is on the same transport ray as $x$ (i.e. $T(y)=T(x)$).
\begin{proposition} \label{Selector} The multi-valued map $\,T\,$ has a Borel selector function.
 \end{proposition}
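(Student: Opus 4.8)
The plan is to exhibit $T$ as a multifunction with closed graph and compact values, and then to extract a Borel selector by a measurable selection theorem (or, if one prefers, by an explicit construction).

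First I would check that for each $x\in\bar\Omega$ the set $T(x)$ is nonempty and compact: the map $y\mapsto |x-y|+g(y)$ is continuous on the compact set $\partial\Omega$, so its set of minimisers is nonempty and closed in $\partial\Omega$. Next I would observe that the graph of $T$ is closed, since
$$\mathrm{Gr}(T)=\bigcap_{z\in\partial\Omega}\big\{(x,y)\in\bar\Omega\times\partial\Omega:\ |x-y|+g(y)\le |x-z|+g(z)\big\}$$
is an intersection of closed sets. As $\partial\Omega$ is compact, a compact-valued map with closed graph is upper semicontinuous, hence a Borel measurable multifunction: $\{x:T(x)\subset U\}$ is open for every open $U\subset\partial\Omega$, and consequently $\{x:T(x)\cap V\neq\emptyset\}$ is Borel for every open $V\subset\partial\Omega$.

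With this in hand, the Kuratowski--Ryll-Nardzewski selection theorem applies --- $\partial\Omega$ being a compact, hence Polish, subset of $\mathbb R^d$ and $T$ a measurable multifunction with nonempty closed values --- and produces a Borel measurable selection of $T$. If one prefers to avoid the abstract theorem, the selector can be built by hand, coordinate by coordinate: set $T^{(0)}:=T$ and, inductively, $m_k(x):=\min\{y_k:\ y\in T^{(k-1)}(x)\}$ and $T^{(k)}(x):=\{y\in T^{(k-1)}(x):\ y_k=m_k(x)\}$; each $m_k$ is lower semicontinuous (minimum of a continuous function over an upper semicontinuous compact-valued map), hence Borel, and each $T^{(k)}$ is again a Borel measurable compact-valued multifunction, so that after $d$ steps $x\mapsto T^{(d)}(x)$ is the Borel map picking the lexicographically smallest point of $T(x)$.

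The only delicate point, on either route, is the measurability bookkeeping: propagating upper semicontinuity of $T$ to measurability of the sets $\{x:T(x)\cap V\neq\emptyset\}$, respectively keeping each $m_k$ and each $T^{(k)}$ Borel along the induction. All of this is routine once the closed-graph property is established, so the real content is the elementary continuity observation of the first step. Note finally that by Proposition \ref{Prop 2.1} any such selector agrees with $T$ Lebesgue-almost everywhere, so the particular choice will be immaterial when this selector is used to define $\gamma_{ib}$.
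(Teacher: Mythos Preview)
Your proof is correct and follows essentially the same strategy as the paper: establish that the graph of $T$ is closed and then invoke a measurable selection theorem. The paper verifies closedness via sequences and cites Castaing--Valadier, while you write the graph as an intersection of closed sets and cite Kuratowski--Ryll-Nardzewski (or give the explicit lexicographic selector); these are cosmetic differences, and your added remarks on nonemptiness, compactness, and upper semicontinuity only make the argument more self-contained.
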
 
\begin{proof} To prove that $T$ has a Borel selector function, it is enough to show that the graph of $T$ is closed (see for instance Chapter 3 in \cite{13}). Take a sequence $(x_n,y_n)$ in the graph of $T$ such that $(x_n,y_n) \rightarrow (x,y)$. As $\,y_n \in T(x_n)$, then we have
  $$|x_n-y_n|+g(y_n)\leq|x_n-z|+g(z)\;\mbox{ for all }\;z \in \partial\Omega.$$
  Passing to the limit, we get 
  $$|x-y|+g(y)\leq|x-z|+g(z)\;\mbox{ for all} \;z \in \partial\Omega $$ 
 % Hence, 
 and then, $y \in T(x)$. $\qedhere$
  \end{proof} 
For simplicity of notation, we still denote this selector by $T$.\\ 
\\
  %\begin{proposition} \label{Prop 2.5} %Let $\gamma_T$ be concentrated on the set $\{(x,y)\in \bar{\Omega} \times \bar{\Omega}: y=T(x)\}$ such that $(\Pi_x)_{\#}\gamma_T =f$ i.e
  %If $f \ll \mathcal{L}^d$, then
  It is not difficult to prove that the plan $\gamma_T:=(Id,T)_{\#}f$
   %. Then $\gamma_T$ 
  is the unique minimizer for (P2). In addition, $\gamma_T$ solves the following problem
$$\min\left\{\int_{\bar{\Omega}\times\bar{\Omega}}|x-y|\mathrm{d}\gamma
 \,:\,\gamma \in \Pi(f,(T)_{\#}f) \right\}.$$
 
 For simplicity of notation, we will denote this minimizer by $\gamma$ instead of $\gamma_T$.\\ 
 %In this section we study the $L^p$ summability of the transport density $\sigma_{\gamma}$ for problem (P) under the assumptions that $\Omega$ is a $ \textit{round polyhedra}$ and that the Proposition \ref{Imp} is true. Notice that as the target measure $T_{\#} f$ is concentrated on the boundary, we cannot use Proposition \ref{prop transp dens} to prove that $\sigma_{\gamma} \in L^p(\Omega)$ provided that $f \in L^p(\Omega)$.\\
 \\
 Let $\,\sigma$ be the transport density associated with the transport of $f$ into $T_{\#}f$. By the definition of $\sigma$ (see \ref{DefTrans}), we have that for all $ \phi \in C(\bar{\Omega})$ 
\begin{eqnarray*}
<\sigma,\phi> &=& \int_{\bar{\Omega}\times\bar{\Omega}}\int_0^1|x-y|
  \phi((1-t)x + ty)\mathrm{d}t \mathrm{d}\gamma(x,y) \\
 &=&\int_{\bar{\Omega}}\int_0^1|x-T(x)|\phi((1-t)x+tT(x))f(x)\mathrm{d}t \mathrm{d}x.
\end{eqnarray*}
  Then $$\sigma=\int_0^1 \mu_t \mathrm{d}t,$$ where $$<\mu_t,\phi>:=\int_{\bar{\Omega}}|x-T(x)|\phi(T_t(x))f(x)\mathrm{d}x\;\;\mbox{for all}\,\;\varphi \in C(\bar{\Omega})$$  and
  $$T_t(x):=(1-t)x+tT(x) \mbox{ for all} \;x \in \bar{\Omega}.$$ \\

 Notice that in the definition of $\mu_t$, differently from what done in \cite{9}, we need to keep the factor $|x-T(x)|$, which will be essential in the estimates. In addition, we have that $\,\mu_t \ll \mathcal{L}^d$ as soon as one has $\,f\ll \mathcal{L}^d$ (see \cite{9}).\\
  
  Suppose that the boundary of $\,\Omega\,$ is a union of parts of sphere of radius $r>0$. Then, we have the two following propositions, whose proofs, for simplicity of exposition, are postponed to 
  %where for simplicity of exposition we will give their proofs in 
  Section \ref{5}. 
  \begin{proposition} \label{Prop. 2.7} Suppose that $g$ is in $\,C^2(\bar{\Omega})$. Then, there exists a negligible closed set $N$ in 
  $\bar{\Omega}$ such that $\,\accentset{\circ}{\Omega} \backslash N \subset S.$ Moreover, $T$ is a $\,C^1$ function on $\,\accentset{\circ}{\Omega} \backslash N$.\\
  \end{proposition}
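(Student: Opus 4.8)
The plan is to show that the only obstructions to $T$ being single-valued and smooth come from a geometrically controlled set, and that this set is closed and Lebesgue-negligible. Recall that $f(x) = \min\{|x-y| + g(y) : y \in \partial\Omega\}$ is $1$-Lipschitz, hence differentiable Lebesgue-a.e.; and by Proposition \ref{Prop 2.1}, $T(x)$ is a singleton Lebesgue-a.e. The first step is to show that, for $x \in \accentset{\circ}{\Omega}$, if $T(x) = \{y\}$ is a singleton and $f$ is differentiable at $x$, then one can actually read off $y$ from the gradient: as in the proof of Proposition \ref{Prop 2.1}, the map $x \mapsto f(x) - |x-y|$ attains a max at $x$, so $\nabla f(x) = (x-y)/|x-y|$ wherever it exists, and hence $|x-y| = f(y) - g(y)$ gives no direct formula, but combining $\nabla f(x)$ with the boundary constraint $y \in \partial\Omega$ pins down $y$ once we know the geometry of $\partial\Omega$ near $y$. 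This is where the round-polyhedron hypothesis enters: each boundary piece is an arc of a sphere of radius $r$, so $y$ is the unique intersection of the half-line $\{x - s\nabla f(x) : s \geq 0\}$ with that sphere that is closest to $x$; writing this intersection out via the quadratic formula produces an explicit expression for $T$ in terms of $x$ and $\nabla f(x)$, which is $C^1$ in $x$ on the open set where $\nabla f$ is $C^1$, $|\nabla f| = 1$, and the relevant discriminant is strictly positive.

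The second step is to identify the bad set $N$. Define $N$ to be the set of $x \in \bar{\Omega}$ such that either $T(x)$ is not a singleton, or $T(x) = \{y\}$ lies on the (relatively closed, lower-dimensional) "edge" set where two spherical faces of $\partial\Omega$ meet, or the segment $[x, T(x)]$ is tangent to the exterior sphere at $T(x)$ (the degenerate case of the quadratic). I would argue that $\accentset{\circ}{\Omega} \setminus N \subset S$ essentially by construction, and that on $\accentset{\circ}{\Omega} \setminus N$ the above explicit formula shows $T$ is $C^1$: the point is that $f$ is not merely differentiable but $C^1$ there, which follows because near such a point $f(x) = |x - T(x)| + g(T(x))$ with $T$ continuous (the graph of $T$ is closed, Proposition \ref{Selector}) and with the right-hand side given by the smooth quadratic-formula expression, so $\nabla f$ is continuous. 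For the measure estimate I would show $\mathcal{L}^d(N) = 0$: the non-singleton part is negligible by Proposition \ref{Prop 2.1}; the preimage under $T$ of the edge set is contained, by Proposition \ref{Prop 2.3} (the transport-ray structure: $T$ is constant along $[x,T(x)]$), in a countable union of cones over $(d-2)$-dimensional edge sets, hence negligible; and the tangency set is handled similarly, being the preimage of a lower-dimensional subset of $\partial\Omega$. Finally, closedness of $N$ follows from closedness of the graph of $T$ together with the fact that the edge set and tangency set are closed in $\partial\Omega$, and that the non-singleton set is closed (if $x_n \to x$ with $|T(x_n)|\ge 2$ witnessed by $y_n \ne y_n'$, pass to limits using the closed graph).

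The main obstacle I expect is the passage from "$f$ differentiable a.e." to "$f$ is genuinely $C^1$ on $\accentset{\circ}{\Omega}\setminus N$," which is what is really needed to conclude $T \in C^1$ rather than merely $T$ defined a.e.\ with an a.e.\ formula. The resolution is the one sketched above: away from $N$, $T$ is single-valued and continuous (closed graph plus local compactness of $\partial\Omega$), the exterior-ball/round-polyhedron geometry forces the transport segment to hit $\partial\Omega$ transversally and away from edges, and then the closed-form expression $T(x) = x - (\text{explicit smooth function of }x)$ upgrades continuity of $T$ to $C^1$ regularity and simultaneously gives $f \in C^1$. A secondary technical point is verifying that the discriminant in the quadratic stays bounded away from zero locally on $\accentset{\circ}{\Omega}\setminus N$ — this is exactly the statement that the transport segment is not tangent to the exterior sphere, which is precisely what we have excised into $N$, so it holds by construction but must be checked to be an open condition so that $\accentset{\circ}{\Omega}\setminus N$ is open.
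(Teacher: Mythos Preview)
There is a genuine circularity that the paper's proof avoids. Your ``explicit formula'' for $T(x)$ --- the intersection of the half-line $\{x - s\nabla f(x) : s \ge 0\}$ with the relevant sphere --- requires $\nabla f(x)$ as input, but $\nabla f(x) = (x - T(x))/|x - T(x)|$, so the formula only encodes the constraint $T(x) \in \partial B(b,r)$ and gives nothing new. From $T \in C^0$ (closed graph) you get $\nabla f \in C^0$, and the quadratic then returns $T$ as a $C^0$ function of $(x,\nabla f(x))$, hence merely $C^0$ in $x$: there is no upgrade to $C^1$ unless you already know $\nabla f \in C^1$, which would require $T \in C^1$. The paper breaks this circle by applying the Implicit Function Theorem to the \emph{first-order optimality condition}: with the face parametrized as $(z,\varphi(z))$, the system $h(x,z) := \nabla_z\bigl(|x-(z,\varphi(z))|+g(z,\varphi(z))\bigr)=0$ is $C^1$ in $(x,z)$ and does not involve $\nabla f$ at all. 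The Jacobian $\partial h/\partial z$ is the matrix $F+N$, and the key technical input --- absent from your plan --- is Proposition~\ref{Inv}: along each transport ray $x=T_t(x_0)$ one has $F(x)=F(x_0)$ and $N(x)=(1-t)N(x_0)$, so $\det(F+N)$ is a polynomial in $t$ with finitely many zeros, and invertibility fails only on a negligible set. IFT then delivers $T\in C^1$ near each good point and simultaneously forces single-valuedness nearby (two distinct minimizers would both have to equal the implicit function).

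Your closedness argument for $N$ also fails as written: if $x_n\to x$ with witnesses $y_n\ne y_n'$, the closed graph gives subsequential limits $y,y'\in T(x)$, but nothing prevents $y=y'$, so the non-singleton set need not be closed on its own. In the paper it is precisely the IFT step that produces an open neighbourhood of each good point inside $S$, and $N$ is then the (automatically closed) complement of the union of these neighbourhoods. Incidentally, the other two pieces of your $N$ are in fact empty in $\accentset{\circ}{\Omega}$: Proposition~\ref{Prop. 5.1} shows no interior point has $T(x)$ on an edge, and the first-order condition forces the tangential component of the ray direction at $T(x)$ to have norm $|\nabla_\tau g(T(x))|\le\lambda<1$, so the segment is never tangent to the sphere.
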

%We will see the proof of this proposition in Section \ref{5}.\\

Now, we want to give an explicit formula of $\mu_t$ in terms of $f$ and $T$. Let $\phi$ be in $C(\bar{\Omega})$, then we have
  $$\int_{\bar{\Omega}}\phi(y)\mathrm{d}\mu_t(y)=\int_{U_t}\phi(T_t(x))\left|x-T(x)\right| f(x) \mathrm{d}x,$$ where $\,U_t:=\{x \in \accentset{\circ}{\Omega} \backslash N:\,T_t(x) \in \accentset{\circ}{\Omega} \backslash N\}.\quad$\\
  \\
Take a change of variable $y=T_t(x)$. By Propositions \ref{Prop 2.3} \& \ref{Prop. 2.7}, we infer that \\
 $$x=\frac{y-tT(y)}{1-t}\,\;\,\mbox{and}\;\, \left|x-T(x)\right|=\frac{|y-T(y)|}{1-t}.$$
 
  Consequently, we have \\
 $$ \int_{\bar{\Omega}}\phi(y)\mu_t(y)\mathrm{d}y=\int_{\bar{\Omega}} \phi(y) \frac{|y-T(y)|}{1-t}f\left(\frac{y-tT(y)}{1-t}\right)|J(y)|1_{V_t}(y)\mathrm{d}y, $$\\
   where $V_t:=T_t(U_t)\;$ and
  $\;J(y):=\frac{1}{\mbox{det}(DT_t(x))}$.\\
  
  Finally, we get $$ \mu_t(y)= \frac{|y-T(y)|}{1-t}f\left(\frac{y-tT(y)}{1-t}\right)|J(y)|1_{V_t}(y)\;\,\mbox{for a.e}\;\,y \in \Omega.$$
  \\
Notice that for all $ y \in V_t$, we have $|y-T(y)|\leq (1-t)  l(y)$
where $l(y)$ is the length of the transport ray containing $y$, i.e
$$
 l(y):=\mathrm{sup}\left\{|x-T(x)|:\,T(x)=T(y),\,x \in \bar{\Omega}\cap\{s y + (1-s) T(y),\, s\geq 1\}\right\}.$$ 
   \begin{proposition} \label{Imp} There exist a constant $L:=L(d,\mbox{diam}(\Omega),\lambda,r,D^2 g)>0$ and a compact set $ K:=\{y \in \bar{\Omega},\,d(y ,\partial\Omega)\geq L\}$ such that for a.e $ x \in \Omega $, if $\;T_t(x) \in \Omega \backslash K$ then we have the following estimate
  $$|\det(DT_t(x))| \geq C (1-t),$$
  where $\,C:=C(d,\mbox{diam}(\Omega),\lambda,r,D^2 g)>0.$
  \end{proposition}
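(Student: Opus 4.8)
The plan is to prove the estimate pointwise for every $x$ in the open, full‑measure set $\accentset{\circ}{\Omega}\setminus N$ furnished by Proposition~\ref{Prop. 2.7}, on which $T$ is $C^{1}$ and $x$ lies in the relative interior of its transport ray. Put $\rho(x):=|x-T(x)|$ and $e(x):=\nabla f(x)=\frac{x-T(x)}{\rho(x)}$, so that $T_t(x)=x-t\,\rho(x)\,\nabla f(x)$ and
$$
DT_t(x)=I-t\,\nabla f(x)\otimes\nabla\rho(x)-t\,\rho(x)\,D^{2}f(x).
$$
Two elementary identities drive the computation: $D^{2}f(x)\,\nabla f(x)=0$ (differentiate $|\nabla f|^{2}\equiv1$) and $\nabla\rho(x)\cdot\nabla f(x)=1$ (along the transport ray, moving away from the boundary increases $\rho$ at unit speed, by Proposition~\ref{Prop 2.3}). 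Hence, in an orthonormal frame whose first vector is $e(x)$, the matrix $DT_t(x)$ is block triangular, with $(1-t)$ in the corner and lower block $\bigl(I-t\rho(x)D^{2}f(x)\bigr)\big|_{e(x)^{\perp}}$; therefore
$$
\det DT_t(x)=(1-t)\prod_{i=1}^{d-1}\bigl(1-t\,\rho(x)\,\mu_i(x)\bigr),
$$
where $\mu_{1}(x),\dots,\mu_{d-1}(x)$ are the eigenvalues of $D^{2}f(x)$ on $e(x)^{\perp}$.

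The next step is to transport the $\mu_i$ back along the ray to the boundary. On $(\nabla f)^{\perp}$ the Hessian of a solution of the eikonal equation $|\nabla f|=1$ satisfies the matrix Riccati equation $\frac{\dd}{\dd s}A=-A^{2}$ along $\nabla f$, which integrates to $\mu_i(x)=\mu_i^{0}/\bigl(1+\rho(x)\,\mu_i^{0}\bigr)$, where $\mu_i^{0}=\mu_i^{0}(T(x))$ is the value at the boundary point $T(x)$, i.e.\ up to a sign a principal curvature of the ``initial wavefront'' emanating from $T(x)$ (equivalently, the curvatures of the level sets of $f$ carried by the ray map $(q,\tau)\mapsto q+\tau\,e(q)$, which is $C^{1}$ by Proposition~\ref{Prop. 2.7}). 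That $x\in\accentset{\circ}{\Omega}\setminus N$ lies strictly before the focal points of its ray is exactly the statement $1+\rho(x)\mu_i^{0}>0$, and substituting the Riccati formula gives
$$
1-t\,\rho(x)\,\mu_i(x)=\frac{1+(1-t)\,\rho(x)\,\mu_i^{0}}{1+\rho(x)\,\mu_i^{0}}.
$$
The decisive — and most technical — input, which is why this proposition is deferred to Section~\ref{5}, is the \emph{uniform curvature bound} $|\mu_i^{0}(q)|\le\Lambda$ with $\Lambda=\Lambda(d,\mbox{diam}(\Omega),\lambda,r,D^{2}g)$, together with the transversality estimate $\langle e(q),n(q)\rangle\ge\sqrt{1-\lambda^{2}}$, $n(q)$ the outer unit normal to $\partial\Omega$ at $q$. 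The latter is immediate: optimality of $T$ forces the component of $e(q)$ tangent to $\partial\Omega$ to coincide with that of $\nabla g(q)$, which has norm $\le\lambda$, so $\langle e(q),n(q)\rangle^{2}\ge1-\lambda^{2}$. The curvature bound is the hard part: it relies on each piece of $\partial\Omega$ being a sphere of radius $r$ (so $\partial\Omega$ has principal curvatures $\pm1/r$), on $g\in C^{1,1}(\bar{\Omega})$ controlling the perturbation of the wavefront relative to $\partial\Omega$, and on $\lambda<1$ giving the non‑degeneracy that makes $q\mapsto e(q)$ Lipschitz with constant depending only on $r,\lambda,D^{2}g$ — this is precisely the content of the technical lemmas of Section~\ref{5}.

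Granting these facts the conclusion is immediate. In the displayed ratio the denominator $1+\rho(x)\mu_i^{0}$ is positive (being before the focal points) and, when $\mu_i^{0}\ge0$, bounded above by $1+\mbox{diam}(\Omega)\,\Lambda$ while the numerator is $\ge1$; when $\mu_i^{0}<0$ the numerator equals $1-(1-t)\rho(x)|\mu_i^{0}|\ge1-\rho(x)|\mu_i^{0}|=1+\rho(x)\mu_i^{0}>0$, so the ratio is $\ge1$. In both cases $1-t\rho(x)\mu_i(x)\ge\bigl(1+\mbox{diam}(\Omega)\,\Lambda\bigr)^{-1}>0$, and multiplying the $d-1$ factors yields
$$
\det DT_t(x)\ge\bigl(1+\mbox{diam}(\Omega)\,\Lambda\bigr)^{-(d-1)}(1-t)=:C\,(1-t),\qquad C=C(d,\mbox{diam}(\Omega),\lambda,r,D^{2}g)>0.
$$
The hypothesis $T_t(x)\in\Omega\setminus K$, i.e.\ $d(T_t(x),\partial\Omega)<L$ for a small $L=L(d,\mbox{diam}(\Omega),\lambda,r,D^{2}g)$, serves only to localise $T(x)$ to the relative interior of a single spherical face and to keep $(1-t)\rho(x)$ small, so that the curvature estimates above are applied in the chart where they are established; on the complementary set $\{T_t\in K\}$ one has $(1-t)\rho(x)\ge L$, hence $t$ is bounded away from $1$, and the corresponding part of $\sigma_{ib}$ is estimated in Section~\ref{2} by an interior argument requiring no such refined bound. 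The real obstacle is thus the uniform curvature/transversality control of the wavefronts issuing from the spherical faces of $\partial\Omega$.
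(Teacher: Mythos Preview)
Your route via the eikonal structure of $f$ and the matrix Riccati equation is genuinely different from the paper's, which never invokes $D^{2}f$ or wavefront curvatures. The paper works in coordinates with the $x_d$-axis normal to $\partial\Omega$ at $T(x)$, differentiates the first-order optimality condition for $T$, and obtains $(\partial T_i/\partial x_j)_{i,j\le d-1}=(F+N)^{-1}F$ for explicit $(d-1)\times(d-1)$ matrices $F,N$; then $|\det DT_t|=(1-t)\,|\det(F+(1-t)N)|/|\det(F+N)|$. The transversality bound $F\ge(1-\lambda^{2})I$ is your estimate $\langle e,n\rangle\ge\sqrt{1-\lambda^{2}}$ in these coordinates, and the key step is that $(1-t)N$ at $x$ coincides with $N$ evaluated at $y=T_t(x)$, whose norm is of order $\alpha(y)=|y-T(y)|\le\tfrac{1+\lambda}{1-\lambda}\,d(y,\partial\Omega)$. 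Hence for $y\in\Omega\setminus K$ one gets $F+(1-t)N\ge\tfrac12 F$ and the estimate follows. The restriction to $\Omega\setminus K$ is thus \emph{essential} to the paper's lower bound, not a mere chart-localisation as you suggest.

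The substantive gap in your proposal is that the uniform curvature bound $|\mu_i^{0}|\le\Lambda$ is the whole difficulty, and you do not prove it. You say it is ``precisely the content of the technical lemmas of Section~\ref{5}'', but Propositions~\ref{Prop. 5.1}, \ref{Prop. 5.2} and \ref{Inv} give no such bound --- they only set up and justify the $F,N$ computation. To carry your approach through you would have to compute $\mu_i^{0}$ (essentially the eigenvalues of $F^{-1}\lim_{\alpha\to0}\alpha^{-1}N$ on $e^{\perp}$) in terms of $r,\lambda,D^{2}g$, which is the same work as the paper's direct estimate, done in a rotated frame. A secondary regularity issue: the Riccati ODE $\dot A=-A^{2}$ requires $f\in C^{3}$ along rays, whereas Proposition~\ref{Prop. 2.7} only yields $T\in C^{1}$ (hence $f\in C^{2}$) from $g\in C^{2}$. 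Finally, note that your own case analysis --- ratio $\ge1$ when $\mu_i^{0}<0$, ratio $\ge(1+\mbox{diam}(\Omega)\,\Lambda)^{-1}$ when $\mu_i^{0}\ge0$ --- would, if the curvature bound held, give the Jacobian estimate \emph{globally} on $\accentset{\circ}{\Omega}\setminus N$, rendering $K$ superfluous; so your final paragraph on why $K$ is needed is inconsistent with the argument that precedes it.
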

  %This is the key point in our paper. Hence, 
  We are now ready to prove the $L^p$ summability of the transport density $\sigma$. Then, under the assumption that $\Omega$ is a round polyhedron, we have the following result.
  %Supposing that Proposition \ref{Imp} is true, then we have the following
  \begin{proposition} \label{infinityestimates} 
   Suppose that $\,\Omega\,$ is a round polyhedron.\,\,Then, the transport density $\sigma$ belongs to $L^{\infty}(\Omega)$ provided that $f \in L^{\infty}(\Omega)$.
  \end{proposition}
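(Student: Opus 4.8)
The plan is to write $\sigma=\int_0^1\mu_t\,\dd t$ as established above and to bound $\|\mu_t\|_{L^\infty}$ by a quantity that is integrable in $t$ on $(0,1)$. Recall the explicit formula
$$\mu_t(y)=\frac{|y-T(y)|}{1-t}\,f\!\left(\frac{y-tT(y)}{1-t}\right)|J(y)|\,1_{V_t}(y),\qquad J(y)=\frac1{\det(DT_t(x))},$$
valid for a.e.\ $y\in\Omega$ with $x=T_t^{-1}(y)$ (Propositions \ref{Prop 2.3} and \ref{Prop. 2.7} make this change of variables licit off a negligible closed set $N$). Since $f\in L^\infty$, it suffices to control $\frac{|y-T(y)|}{(1-t)\,|\det(DT_t(x))|}$ uniformly in $y$ and suitably in $t$.

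First I would split the estimate according to the compact set $K=\{y:\ d(y,\partial\Omega)\ge L\}$ furnished by Proposition \ref{Imp}. On the region $\{T_t(x)\in\Omega\setminus K\}$, Proposition \ref{Imp} gives $|\det(DT_t(x))|\ge C(1-t)$, so there
$$\mu_t(y)\le \frac{|y-T(y)|}{(1-t)\cdot C(1-t)}\,\|f\|_\infty\le \frac{\mathrm{diam}(\Omega)}{C}\,\frac{\|f\|_\infty}{(1-t)^2},$$
which is not yet integrable near $t=1$; the point is that the bound $|y-T(y)|\le (1-t)\,l(y)\le (1-t)\,\mathrm{diam}(\Omega)$ on $V_t$ supplies an extra factor $(1-t)$, yielding $\mu_t(y)\le (\mathrm{diam}(\Omega)/C)\|f\|_\infty\,(1-t)^{-1}$. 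This is still borderline; to close the argument I would, for the part of the mass reaching a neighbourhood of $\partial\Omega$, instead integrate in $t$ \emph{first} along each transport ray — that is, use that $\sigma$ restricted to a ray equals, up to the $C^1$ diffeomorphism $T_t$, the pushforward of a one-dimensional Lebesgue-type measure, so that the contribution of a full ray of length $l$ to $\sigma(y)$ is $\int$ over the ray of the source density times a Jacobian factor bounded by $l\cdot\sup|J|$; Proposition \ref{Imp} again bounds the transverse Jacobian from below on $\Omega\setminus K$, and the net effect is a bound on $\sigma(y)$ by $C'\|f\|_\infty$ for $y\in\Omega\setminus K$.

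On the complementary region $K$ (the interior, far from $\partial\Omega$), the map $T$ is $C^1$ by Proposition \ref{Prop. 2.7}, all transport rays hitting $K$ have length bounded below by $L$, and there is no degeneration of $\det(DT_t)$ near $t=1$ because $T_t(x)\in K$ forces $x$ to stay at distance $\gtrsim L$ from the boundary target; here a classical interior estimate for the transport density with distance cost (as in Proposition \ref{prop transp dens} and the references therein, applied to $f$ and its image in the interior) gives $\sigma\in L^\infty(K)$ directly from $f\in L^\infty$. Combining the two regions yields $\sigma\in L^\infty(\Omega)$.

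The main obstacle I anticipate is the behaviour as $t\to1$, i.e.\ near the boundary $\partial\Omega$ where the rays terminate and $\det(DT_t)$ degenerates linearly: one must use the geometric gain $|y-T(y)|\le(1-t)l(y)$ \emph{together with} the lower bound $|\det(DT_t)|\gtrsim (1-t)$ of Proposition \ref{Imp}, and be careful that these two estimates hold on compatible sets (Proposition \ref{Imp} is stated precisely on $\{T_t(x)\in\Omega\setminus K\}$, which is exactly the regime where the degeneration occurs). The round-polyhedron hypothesis enters only through Propositions \ref{Prop. 2.7} and \ref{Imp}, whose proofs are deferred to Section \ref{5}, so modulo those two inputs the argument is a change-of-variables computation plus the two-region split described above.
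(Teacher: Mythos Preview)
Your overall strategy --- split into $K$ and $\Omega\setminus K$, use Proposition~\ref{Imp} near the boundary and a cruder bound away from it --- is exactly the paper's. But on $\Omega\setminus K$ you stop one step short of closing the estimate, and your proposed workaround (``integrate along rays first'') is unnecessary.

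The gap is this. You correctly note that $y\in V_t$ forces $|y-T(y)|\le(1-t)\,l(y)$. You then use this inequality to \emph{bound the integrand}, replacing $|y-T(y)|$ by $(1-t)\,\mathrm{diam}(\Omega)$; this gains one factor of $(1-t)$ but leaves $\mu_t(y)\lesssim(1-t)^{-1}$, which is not integrable on $(0,1)$. The right move is to use the same inequality to \emph{truncate the integration range}: for fixed $y$, the indicator $1_{V_t}(y)$ vanishes for $t>1-|y-T(y)|/l(y)$, so
\[
\sigma(y)=\int_0^{1-\frac{|y-T(y)|}{l(y)}}\mu_t(y)\,\dd t
\;\le\;\frac{\|f\|_\infty}{C}\int_0^{1-\frac{|y-T(y)|}{l(y)}}\frac{|y-T(y)|}{(1-t)^2}\,\dd t.
\]
Keeping $|y-T(y)|$ fixed in the integrand and computing the integral explicitly gives $l(y)-|y-T(y)|\le\mathrm{diam}(\Omega)$, and you are done on $\Omega\setminus K$. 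In short: use $|y-T(y)|\le(1-t)l(y)$ once, as a constraint on $t$, not as a pointwise bound on the numerator.

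On $K$ your sketch is too vague: Proposition~\ref{prop transp dens} does not apply because the target $T_\#f$ is singular. The paper's argument is simpler than what you outline: the very observation you make (that $T_t(x)\in K$ forces $1-t\ge L/\mathrm{diam}(\Omega)=:L_1$) means $\mu_t\equiv0$ on $K$ for $t>1-L_1$, so $\sigma=\int_0^{1-L_1}\mu_t\,\dd t$ there; now the crude Jacobian bound $\|\mu_t\|_{L^\infty}\le(1-t)^{-d}\,\mathrm{diam}(\Omega)\,\|f\|_\infty$ from \cite{9} is integrable on $[0,1-L_1]$.
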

  \begin{proof} %For simplicity of notation, we write $\sigma$ instead of $\sigma_{\gamma}$. Then 
  By Proposition \ref{Imp}, we have
  \begin{eqnarray*}
  \parallel \sigma \parallel _{L^{\infty}(\Omega  \backslash K)}  &:=& \sup_{y \,\in\, \Omega  \backslash K}\left(\int_0^1 \mu_t(y)\mathrm{d}t\right)\\
 &=&\sup_{y \,\in \,\Omega  \backslash K }\left(\int_0^{1-\frac{|y-T(y)|}{l(y)}}\frac{|y-T(y)|f(\frac{y-tT(y)}{1-t})}{(1-t)|\mbox{det}(DT_t(x))|}1_{V_t}(y)\mathrm{d}t\right) \\
 & \leq & C^{-1}  \parallel f \parallel_{L^{\infty}(\Omega)}\left(\int_0^{1-\frac{|y-T(y)|}{l(y)}}\frac{|y-T(y)|}{(1-t)^{2}}\mathrm{d}t\right).
\end{eqnarray*}
  Yet, it is easy to see that
  $$\int_0^{1-\frac{|y-T(y)|}{l(y)}}\frac{|y-T(y)|}{(1-t)^{2}}\mathrm{d}t\, \leq \, \mbox{diam}(\Omega).$$ 
% Using a change of variable $  x=\frac{y-tT(y)}{1-t},$ we get 
% $$ \int_{ \Omega  \backslash K }\int_0^{1-\frac{|y-T(y)|}{e_{T(y)}}}\frac{(f(\frac{y-tT(y)}{1-t}))^p}{|\mbox{det}(DT_t(x))|}1_{V_t}(y)\mathrm{d}t\mathrm{d}y \leq \int_0^1\int_{\Omega}|f(x)|^p\mathrm{d}x\mathrm{d}t \\
   % = \parallel f \parallel_{L^p(\Omega )}^p .$$
    Then, $$\parallel \sigma \parallel_{L^{\infty}(\Omega  \backslash K )}\, \leq C^{-1} \mbox{diam}(\Omega)\parallel f \parallel_{L^{\infty}(\Omega)}.$$ \\
    
   On the other hand, by Minkowski's inequality
  $$\parallel \sigma \parallel_{L^{\infty}(K)}\,\leq \int_0^1 \parallel \mu_t\parallel_{L^{\infty}(K)}\mathrm{d}t. $$ \\
  Notice that for a.e $y \in \Omega$, if $\mu_t(y)\neq 0 $ then $|y-T(y)|\leq (1-t) l(y).$ But for any $y \in K$, we have $$\frac{|y-T(y)|}{l(y)}\geq \frac{d(y,\partial\Omega)}{\mathrm{diam}(\Omega)} \geq \frac{L}{\mathrm{diam}(\Omega)}:=L_1.$$ 
  
  Then, $$\parallel \sigma \parallel_{L^{\infty}(K)}\,\leq \int_0^{1-L_1}\parallel \mu_t \parallel_{L^{\infty}(K)}\mathrm{d}t.$$ By \cite{9}, we have $$ \parallel \mu_t\parallel_{L^{\infty}(K)}\,\leq (1-t)^{-d} \mbox{diam}(\Omega)  \parallel f \parallel_{L^{\infty}(\Omega)}.$$
  
Finally, we get $$ \parallel \sigma \parallel_{L^{\infty}(\Omega)} \,\leq  \max\left\{\int_0^{1-L_1}(1-t)^{-d}\mathrm{d}t,C^{-1}\right\}\mbox{diam}(\Omega)\parallel f \parallel_{L^{\infty}(\Omega)},$$
%In addition, if we send $\;p\;$ to $ +\infty $ (then $q:=\frac{p}{p-1} \rightarrow 1 $), we get $$ \parallel \sigma \parallel_{L^{\infty}(\Omega)} \leq C \parallel f \parallel_{L^{\infty}(\Omega )}$$ 
where $L_1$ and $C$ are two strictly positive constants depending only on $d,\,\mbox{diam}(\Omega),\,\lambda,\,r$ and $D^2g. \qedhere$
\end{proof}
 \begin{proposition} \label{Ourtheorem} 
Let $\,\Omega\,$ be a round polyhedron and suppose $f \in L^p(\Omega)$ for some $p \in [1,+\infty]$. Then, the transport density $\,\sigma$ also belongs to $L^p(\Omega)$.
 \end{proposition}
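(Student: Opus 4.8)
The plan is to interpolate between the two endpoint estimates already available. For $p=\infty$ the statement is exactly Proposition~\ref{infinityestimates}, and for $p=1$ it is trivial since $\sigma$ is always a finite measure and here $\sigma\ll\mathcal L^d$ with $\|\sigma\|_{L^1(\Omega)}\le\mathrm{diam}(\Omega)\,\|f\|_{L^1(\Omega)}$ (this is immediate from the definition $\sigma=\int_0^1\mu_t\,\mathrm dt$ and the change of variables computed above, bounding $|x-T(x)|$ by $\mathrm{diam}(\Omega)$). So the first step is to record these two bounds as the cases $p=1$ and $p=\infty$.

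For general $p\in(1,\infty)$ I would repeat the $L^\infty$ argument but now tracking $L^p$ norms, splitting $\Omega$ into the ``bad'' region $\Omega\setminus K$ near the boundary and the ``good'' compact set $K$ with $d(\cdot,\partial\Omega)\ge L$. On $K$, Minkowski's integral inequality gives $\|\sigma\|_{L^p(K)}\le\int_0^1\|\mu_t\|_{L^p(K)}\,\mathrm dt$, the integral is effectively over $[0,1-L_1]$ because $\mu_t(y)\ne0$ forces $|y-T(y)|\le(1-t)l(y)$ and on $K$ one has $|y-T(y)|/l(y)\ge L_1$, and the known estimate from \cite{9} gives $\|\mu_t\|_{L^p(K)}\le(1-t)^{-d/p'}\,\mathrm{diam}(\Omega)^{1/p'}\|f\|_{L^p(\Omega)}$ (with $1/p+1/p'=1$), whose $t$-integral over $[0,1-L_1]$ is finite. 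On $\Omega\setminus K$, using the explicit formula $\mu_t(y)=\frac{|y-T(y)|}{1-t}f\!\left(\frac{y-tT(y)}{1-t}\right)|J(y)|1_{V_t}(y)$ together with the Jacobian lower bound $|\det(DT_t(x))|\ge C(1-t)$ from Proposition~\ref{Imp}, one writes $\mu_t(y)\le C^{-1}(1-t)^{-2}|y-T(y)|\,f\!\left(\frac{y-tT(y)}{1-t}\right)1_{V_t}(y)$; then apply Minkowski again, $\|\sigma\|_{L^p(\Omega\setminus K)}\le\int_0^1\|\mu_t\|_{L^p(\Omega\setminus K)}\,\mathrm dt$, and for each fixed $t$ estimate $\|\mu_t\|_{L^p}$ by pulling out the $L^\infty$ factor $\frac{|y-T(y)|}{1-t}\le l(y)\le\mathrm{diam}(\Omega)$ and changing variables $z=\frac{y-tT(y)}{1-t}$ in the remaining $L^p$ integral of $f$, whose Jacobian is controlled by the same bound of Proposition~\ref{Imp}; the cutoff $1_{V_t}$ restricts $t$ to $[0,1-|y-T(y)|/l(y)]$ so the resulting $t$-integral converges.

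Alternatively, and more cleanly, I would invoke Riesz--Thorto\-rin / Marcinkiewicz interpolation: the two estimates $\|\sigma\|_{L^1}\le\mathrm{diam}(\Omega)\|f\|_{L^1}$ and $\|\sigma\|_{L^\infty}\le C'\|f\|_{L^\infty}$ show that the map $f\mapsto\sigma$ — which by the change-of-variables formula for $\mu_t$ is (for fixed geometry $\Omega,g$) a linear operator $f\mapsto\int_0^1\mu_t\,\mathrm dt$ once the selector $T$ is fixed — is bounded $L^1\to L^1$ and $L^\infty\to L^\infty$, hence bounded $L^p\to L^p$ for all $p\in[1,\infty]$ with norm $\le\mathrm{diam}(\Omega)^{1-1/p}(C')^{1/p}$. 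I would state it this way, noting the linearity in $f$ comes from the fact that $T$, and hence $\gamma=(Id,T)_{\#}f$ and each $\mu_t$, depend linearly on $f$ at fixed $\Omega,g$.

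The main obstacle is the near-boundary region $\Omega\setminus K$: the Jacobian $\det(DT_t(x))$ degenerates like $(1-t)$ there rather than like $(1-t)^d$, so the naive estimate $\|\mu_t\|_{L^p}\lesssim(1-t)^{-d/p'}$ is not what one gets, and one must instead use Proposition~\ref{Imp} together with the crucial factor $|y-T(y)|$ kept in the definition of $\mu_t$ (the remark after the formula for $\mu_t$ emphasizes exactly this) to convert the singularity into the integrable $(1-t)^{-2}|y-T(y)|$ with the $t$-range truncated by $1-|y-T(y)|/l(y)$; checking that this truncation exactly compensates, uniformly in $y$, is the delicate point. If one prefers the interpolation route, the only thing to be careful about is making explicit that the correspondence $f\mapsto\sigma$ is genuinely linear (for fixed $\Omega$ and $g$) so that an interpolation theorem applies — this is clear from the transport map $T$ being independent of $f$.
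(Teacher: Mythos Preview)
Your interpolation route is exactly the paper's proof: the authors observe that $T$ depends only on $\Omega$ and $g$, so $f\mapsto\sigma$ is linear, and then invoke Marcinkiewicz interpolation between the trivial $L^1$ bound and the $L^\infty$ bound of Proposition~\ref{infinityestimates}. Your direct $L^p$ computation (splitting into $K$ and $\Omega\setminus K$, Minkowski, the factor $|y-T(y)|$ against the truncated $t$-range) is precisely the alternative the paper alludes to in the remark following the proposition, so both strands of your proposal match the paper; one minor slip is that the Riesz--Thorin exponents should read $\mathrm{diam}(\Omega)^{1/p}(C')^{1-1/p}$ rather than the swapped version you wrote.
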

 \begin{proof}
 We observe that as the transport is between $f$ and $T_{\#}f$, then the transport density $\sigma$ linearly depends on $f\,$: in this case, $L^p$ estimates could be obtained via interpolation as soon as one has $L^1$ and $L^\infty$ estimates (see for instance \cite{Marc}). $\qedhere$
\end{proof}
\begin{remark}
The same proof as in Proposition \ref{infinityestimates} could also be adapted to proving Proposition \ref{Ourtheorem}, but a suitable use of a H\"older inequality would be required.\\
\end{remark}
We will now generalize, via a limit procedure, the result of Proposition \ref{Ourtheorem} to arbitrary domain having a uniform exterior ball. But before that, we will give a definition of such a domain. 
%having a uniform exterior ball.
%First, we introduce the following Lemma \ref{Prop. 4.1} which is useful for our geometric lemma. 
%\begin{lemma} \label{Prop. 4.1} If f is a convex function, then $\nabla f$ is countably Lipschitz.\end{lemma}
%\begin{proof} We can found the proof of this proposition in \cite{3}.
%\end{proof}
%\begin{remark} \label{Rem1}
%Set $f=\sup_{\alpha} f_{\alpha}$. Suppose that there exist $x_0 \in \accentset{\circ}{\Omega}$ and some $\alpha^{\star}$ such that $f(x_0)=f_{\alpha^\star}(x_0)$. Then if $f, f_{\alpha^{\star}}$ are differentiable at $x_0$, we get $\nabla f(x_0)=\nabla f_{\alpha^\star}(x_0).$ 
%\end{remark}
%First, we introduce the following definition
\begin{definition} We say that $\,\Omega\,$ has a uniform exterior ball of radius $r>0$ if for all $\,y \in \partial\Omega$, there exists some $x \in \mathbb{R}^d$ such that $B(x,r)\cap \Omega=\emptyset\,$ and $\,|x-y|=r$.
\end{definition}

We suppose that $\Omega$ is such a domain, then we have the following\,: 
\begin{proposition} \label{Prop. 6.1}  
The transport density $\,\sigma$ between $\,f$ and $\,T_{\#} f$ belongs to $L^p(\Omega)$ provided that $f \in L^p(\Omega)$ and $\,g$ is $\lambda$-Lip with $\lambda < 1$ and $\,C^{1,1}(\bar{\Omega})$.
\end{proposition}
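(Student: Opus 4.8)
The plan is to prove Proposition \ref{Prop. 6.1} by approximating an arbitrary domain $\Omega$ with a uniform exterior ball of radius $r$ by a decreasing sequence of round polyhedra $\Omega_n$ whose defining spheres have the same radius $r$, so that Proposition \ref{Ourtheorem} applies to each $\Omega_n$, and then passing to the limit. First I would construct $\Omega_n$: since $\Omega$ has a uniform exterior ball of radius $r$, its complement is a union of balls $B(x_\alpha, r)$ with $x_\alpha$ ranging over a suitable set; take finitely many of these balls, $B(x_1,r),\dots,B(x_n,r)$, chosen so that $\Omega_n := \mathbb{R}^d \setminus \bigcup_{i=1}^n B(x_i,r) \supset \Omega$ and $\Omega_n \downarrow \bar\Omega$ (in the Hausdorff sense). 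Each $\Omega_n$ is by construction a round polyhedron with sphere-radius $r$, and it still has a uniform exterior ball of radius $r$. I would also need to extend $g$ (defined on $\partial\Omega$, $\lambda$-Lip and $C^{1,1}$) to a $\lambda$-Lip, $C^{1,1}$ function $\tilde g$ on a neighborhood of all the $\bar\Omega_n$ — this is a routine Whitney/Kirszbraun-type extension, and one checks the Lipschitz constant can be kept below $1$ and the $C^{1,1}$ bound controlled.

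Next I would set up the comparison of transport densities. Let $T_n(x) := \operatorname{argmin}\{|x-y| + \tilde g(y) : y \in \partial\Omega_n\}$ and let $\sigma_n$ be the transport density between $f$ (extended by zero outside $\Omega$, or considered on $\Omega \subset \Omega_n$) and $(T_n)_\#f$; by Proposition \ref{Ourtheorem}, $\|\sigma_n\|_{L^p(\Omega_n)} \le C(d, \operatorname{diam}(\Omega_n), \lambda, r, D^2\tilde g)\,\|f\|_{L^p}$. The crucial point is that the constant in Proposition \ref{infinityestimates} (hence in \ref{Ourtheorem} via interpolation) depends on $\Omega_n$ only through $\operatorname{diam}(\Omega_n)$, $\lambda$, $r$ and the $C^{1,1}$-norm of $\tilde g$ — all of which are bounded uniformly in $n$ (the diameters converge to $\operatorname{diam}(\Omega)$, and $r$, $\lambda$, $\tilde g$ are fixed). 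So $\sup_n \|\sigma_n\|_{L^p} < \infty$. Then I would show $\sigma_n \rightharpoonup \sigma$ weakly-$*$ as measures: this follows from stability of optimal transport, namely $T_n \to T$ pointwise a.e. (because $\partial\Omega_n \to \partial\Omega$ and the argmin is stable by the $\lambda < 1$ uniqueness from Proposition \ref{Prop 2.1}), hence $\gamma_n = (Id, T_n)_\#f \rightharpoonup (Id,T)_\#f = \gamma$, and the transport density depends continuously on $\gamma$ in the weak-$*$ topology (formula \eqref{DefTrans}). A uniform $L^p$ bound together with weak-$*$ convergence to $\sigma$ gives $\sigma \in L^p(\Omega)$ with $\|\sigma\|_{L^p(\Omega)} \le \liminf_n \|\sigma_n\|_{L^p(\Omega_n)}$ by lower semicontinuity of the $L^p$ norm under weak-$*$ convergence (for $p > 1$ this is weak lower semicontinuity; for $p=1$ one uses that a weak-$*$ limit of measures with uniformly bounded densities is absolutely continuous with density bounded in $L^1$ — more carefully, one tests against bounded functions and uses the $L^\infty$/$L^1$ interpolation structure, or simply notes the $p=1$ case also follows from the already-established absolute continuity plus Fatou).

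The main obstacle I expect is twofold. First, making the approximation $\Omega_n \downarrow \bar\Omega$ precise while keeping each $\Omega_n$ a genuine round polyhedron with the same radius $r$ and ensuring $T_n \to T$ a.e. — one must rule out pathologies where a transport ray of the limit problem fails to be approximated, which is where the strict inequality $\lambda < 1$ and Propositions \ref{Prop 2.3}--\ref{Selector} are used to get uniqueness and closedness of the graph, and hence stability. Second, and more seriously, verifying that the constants in Propositions \ref{Prop. 2.7} and \ref{Imp} (through which Proposition \ref{infinityestimates} is proved) genuinely depend on $\Omega_n$ only via the listed quantities $d, \operatorname{diam}(\Omega_n), \lambda, r, D^2\tilde g$ and not via any finer feature of the polyhedral boundary (number of faces, dihedral angles between spherical pieces, etc.); this is plausible because the estimates in Section \ref{5} are local near $\partial\Omega$ and only ever see one spherical piece at a time with its curvature $1/r$, but it has to be checked with care. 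Once the uniformity of constants is in hand, the limit argument itself is soft.
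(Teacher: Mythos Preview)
Your proposal is correct and follows essentially the same approximation-by-round-polyhedra strategy as the paper: construct $\Omega_n \supset \Omega$ with spherical boundary pieces of radius $r$, apply Proposition~\ref{Ourtheorem} to get uniform $L^p$ bounds on $\sigma_n$ (using that the constant depends only on $d,\operatorname{diam}(\Omega_n),\lambda,r,D^2g$), and pass to the limit via stability of $T_n\to T$ and weak lower semicontinuity. The only detail you have not made explicit, and which the paper does, is that Propositions~\ref{Prop. 2.7} and~\ref{Imp} (hence Proposition~\ref{Ourtheorem}) are proved assuming $g\in C^2$, so one first treats the $C^2$ case and then approximates a $C^{1,1}$ function $g$ by smoother ones in a second, routine limit step.
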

\begin{proof}
This proposition can be proven using the same ideas as in Proposition 3.4 in \cite{DweSan}. To do that, take a sequence of domains $(\Omega_k)_k$ such that\,: the boundary of each $\Omega_k$ is a union of parts of sphere of radius $r$, $\partial\Omega_k\rightarrow \partial\Omega\;$ in the Hausdorff sense and $\,\Omega \subset \Omega_k \subset \tilde{\Omega}\,$ for some large compact set $ \tilde{\Omega}$.\\

First of all, we extend $f$ by $0$ outside $\Omega$ and we suppose that $g \in C^2(\bar{\Omega})$. Let $\gamma_k$ be an optimal transport plan between $f$ and $(T^k)_{\#}f$, i.e $\gamma_k$ solves $$\min \left\{\int_{\tilde{\Omega} \times \tilde{\Omega}}|x-y| \mathrm{d} \gamma\,:\,\gamma \in \Pi(f,(T^k)_{\#}f)\right\},$$ 
 where $\,T^k(x)=\mbox{argmin}\{|x-y| + g(y),\,y \in \partial\Omega_k\}.$\\

Let $\sigma_k$ be the transport density associated with the optimal transport plan $\gamma_k$. From Proposition \ref{Ourtheorem}, we have
$$\sigma_k \in L^p(\Omega_k)$$
 and $$\parallel \sigma_k\parallel _{L^p(\Omega_k)}\,\leq C \parallel f \parallel_{L^p(\Omega)},$$ \\
 where $\,C:=C(d,\mbox{diam}(\Omega),\lambda,r,D^2 g)$.\\
 
 %If we extend $\sigma_k$ by $0$ outside $\Omega_k$, we get $ \parallel \sigma_k \parallel_{  L^p(\tilde{\Omega})}\;\leq C \parallel f \parallel_{L^p(\Omega)}$ and
 Then, up to a subsequence, we can assume that $\sigma_k \rightharpoonup \sigma$ weakly in $L^p(\tilde{\Omega})$. Moreover, we have the following estimate %$\sigma \geq 0,\,\sigma=0 \;\,\mbox{on}\,\;\tilde{\Omega}\backslash \Omega $ and 
$$\parallel \sigma \parallel_{L^p(\Omega)}\,\leq \liminf\limits_k  \parallel \sigma_k \parallel _{L^p (\Omega_k)}\,\leq C \parallel f \parallel _{L^p(\Omega)}.$$ 
Hence, it is sufficient to show that this $\sigma$ is in fact the transport density associated with the transport of $f$ into $(T)_{\#}f.$ \\

Firstly, we observe that $(T_k(x))_k$ converges, up to a subsequence, to a point $y \in \partial \Omega$ such that $y \in \mbox{argmin}\{|x-z| + g(z),\,z \in \partial\Omega\}$. Since this point is unique for a.e. $x$, we get (with no need of passing to a subsequence)\,: 
%For a.e $x \in \Omega$\,: as $T^k(x) \in \partial\Omega_k$ and $d_H(\partial\Omega_k,\partial\Omega)\rightarrow 0$, then $T^k(x) \rightarrow y \in \partial\Omega .$ By the definition of $T_k$, we have 
% $$|x-T^k(x)| + g(T^k(x)) \leq |x-z_k| + g(z_k)\;\,\mbox{for all}\;\,z_k \in\partial\Omega_k.$$ 
% If $z \in \partial\Omega$, then we can approximate it by a sequence $(z_k)_k$ such that $z_k \in \partial\Omega_k$ for all $k$ (for instance, take $z_k:=P_{\partial\Omega_k}(z)$). Hence, passing to the limit in the last inequality, we get
%   $$|x-y| + g(y) \leq |x-z| + g(z)\;\,\mbox{for all}\;\,z \in \partial\Omega$$ and $$T(x)=y.$$ 
 %Consequently, $
 $$T^k(x) \rightarrow T(x)$$
 and 
%For any $\psi \in C(\tilde{\Omega}),\;
  % \int_{\Omega }\psi(T^k(x))f(x)\mathrm{d}x \rightarrow \int_{\Omega }\psi(T(x))f(x)\mathrm{d}x$. Then 
  $$(T^k)_{\#}f \rightharpoonup (T)_{\#}f\;\, \mbox{in the sense of measures}.$$
%By Theorem 1.51 in Chapter 1 of \cite{8}, we get 
\\
By Theorem 5.20 in \cite{11}, we get that 
 $$\gamma_k \rightharpoonup \gamma, $$ where $\gamma $ solves $$ \min \left\{ \int_{\bar{\Omega}\times\bar{\Omega}}|x-y|\mathrm{d}\gamma\,:\,\gamma\in \Pi(f,(T)_{\#}f)\right\}.$$ \\
Let $\sigma_{\gamma}$ be the unique transport density between $f$ and $(T)_{\#}f$. As $\gamma_k  \rightharpoonup \gamma$, we find that $\sigma_k \rightharpoonup \sigma_{\gamma}$ (see \ref{DefTrans}). Consequently, $\sigma_{\gamma}=\sigma \in L^p(\Omega)$ and we have the following estimate $$\parallel \sigma_{\gamma}\parallel _{L^p(\Omega)} \,\leq C  \parallel f \parallel _{L^p(\Omega)},$$
where $\,C:=C(d,\mbox{diam}(\Omega),\lambda,r,D^2g). $\\

The approximation of a $C^{1,1}$ function $g$ with smoother functions is also standard. Then, it is not difficult to check again that our result is still true for  a $C^{1,1}$ function $g$. $\qedhere$
%We observe that the constant $C$ depends on $\lambda$ and $\parallel D^2 g \parallel _{L^{\infty}(\Omega)}$. 
%If $g$ is $C^{1,1}(\bar{\Omega})$, we can approximate it by a smooth functions i.e there exists a sequence $(g_k)_k$ such that $g_k \rightarrow g$ uniformly where each $g_k$ is $\lambda$-Lip with $\lambda < 1$ and $C^{\infty}(\bar{\Omega})$. In this case, $(T^k)_{\#}f \rightharpoonup (T)_{\#}f$ in the sense of measures, where $T^k(x)=\mbox{argmin}\{|x-y| + g_k(y),\;y \in \partial\Omega\}.$ In addition, by the construction of $(g_k)_k$, we can assume that $\parallel D^2 g_k \parallel _{L^{\infty}(\Omega)} \;\leq \;\parallel D^2 g \parallel _{L^{\infty}(\Omega)}$. Then we conclude, as above, by observing that the constant $C_k$ is uniformly bounded.\\
\end{proof}
\section{The case $g=0$}
\label{4} 
In the particular case $g=0$, we are able to prove Proposition \ref{Imp} via a geometric argument which will not be available for the general case.
%we suppose that $\,g=0\,$ and we will introduce an alternative proof for the $L^p$ summability of the transport density $\sigma$ between $f$ and $T_{\#}f$, which is already proved in \cite{DweSan}. Our proof is based on a geometric lemma and it will be simpler than the one given in Sections \ref{2} \& \ref{6}. 

\begin{lemma} \label{Lem 1} 
Let $ P_{\partial\Omega}$ be the projection on the boundary of $\,\Omega$, i.e 
$$ P_{\partial\Omega}(x):=\mbox{argmin}\left\{|x-y|,\,y \in \partial\Omega\right\}\;\mbox{for all}\;\;x.$$
Then, $P_{\partial\Omega}$ is the gradient of a convex function. In addition, if $\,\Omega\,$ has a uniform exterior ball of radius $r>0$, then for a.e $x \in \Omega$,
%$d-1$ eigenvalues of 
 % and 
 the positive symmetric matrix $DP_{\partial\Omega}(x)$ has $d-1$ eigenvalues larger than $r/r+d(x,\partial\Omega)$.
 %the lower bound 
 %(which is a positive symmetric matrix) has $d-1$ real eigenvalues $\lambda_i$ such that 
 %$$\lambda_i\geq \frac{r}{r+d(x,\partial\Omega)} \mbox{ for every } i \in \{1,...,d-1\}.$$ 
 \end{lemma}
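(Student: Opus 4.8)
The plan is to realize $P_{\partial\Omega}$ as the $\mathcal L^d$-a.e.\ gradient of an explicit convex function and then to bound its Alexandrov Hessian by comparison with the exterior balls.

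\emph{Convexity.} Since $\partial\Omega$ is compact, for every $x$
$$h(x):=\tfrac12|x|^2-\tfrac12\, d(x,\partial\Omega)^2=\max_{y\in\partial\Omega}\big(\tfrac12|x|^2-\tfrac12|x-y|^2\big)=\max_{y\in\partial\Omega}\big(\langle x,y\rangle-\tfrac12|y|^2\big),$$
a finite supremum of affine functions of $x$, so $h$ is convex on $\mathbb R^d$. An $y$ realizing this maximum is exactly an element of $P_{\partial\Omega}(x)$, and it satisfies $h(x')\ge\langle x',y\rangle-\tfrac12|y|^2=h(x)+\langle x'-x,y\rangle$ for all $x'$, i.e.\ $y\in\partial h(x)$. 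Hence every element of $P_{\partial\Omega}(x)$ lies in $\partial h(x)$; in particular, at each point of differentiability of $h$ (thus $\mathcal L^d$-a.e.) the projection is the singleton $\{\nabla h(x)\}$, so $P_{\partial\Omega}=\nabla h$ a.e.\ and $P_{\partial\Omega}$ is the gradient of a convex function. By Alexandrov's theorem $h$ is twice differentiable a.e., so $DP_{\partial\Omega}=D^2h$ is a.e.\ well defined, symmetric and positive semidefinite.

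\emph{Geometry of the exterior ball.} Now assume $\Omega$ has a uniform exterior ball of radius $r$. Fix an Alexandrov point $x\in\Omega$ of $h$ (a.e.\ $x\in\Omega$ qualifies); then $h$ is differentiable at $x$, so $P_{\partial\Omega}(x)=\{y_0\}$ is a singleton, $d(x):=d(x,\partial\Omega)>0$, and the open ball $B(x,d(x))$ is contained in $\Omega$. Pick an exterior ball $B(z,r)$ with $B(z,r)\cap\Omega=\emptyset$ and $|z-y_0|=r$. The open balls $B(x,d(x))$ and $B(z,r)$ are disjoint (one lies in $\Omega$, the other in $\mathbb R^d\setminus\Omega$) and both have $y_0$ on their boundary, so $|x-z|\ge d(x)+r$; the triangle inequality through $y_0$ gives the reverse, hence $|x-z|=d(x)+r$, the point $y_0$ lies on the segment $[x,z]$, and with $\nu:=\frac{y_0-x}{|y_0-x|}$ we get $z=y_0+r\nu$ and $\frac{x-z}{|x-z|}=-\nu$. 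Finally, since $B(z,r)\subset\mathbb R^d\setminus\Omega$, for every $x'\in\Omega$ one has $d(x',\partial\Omega)=d\big(x',\mathbb R^d\setminus\Omega\big)\le d\big(x',B(z,r)\big)=|x'-z|-r$, with equality at $x'=x$.

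\emph{Conclusion.} Let $H(x'):=\tfrac12|x'|^2-\tfrac12(|x'-z|-r)^2$, which is smooth near $x$ since $z\ne x$. A direct computation gives $D^2H(x')=\frac{r}{|x'-z|}\big(I-\hat e\otimes\hat e\big)$ with $\hat e=\frac{x'-z}{|x'-z|}$, so by the previous step $D^2H(x)=\frac{r}{r+d(x)}\big(I-\nu\otimes\nu\big)$. Also $H\le h$ on $\Omega$ with $H(x)=h(x)$, hence $\eta:=h-H\ge0$ attains its minimum at $x$; as $\eta$ is twice differentiable at $x$, the second-order minimality condition yields $D^2h(x)\ge D^2H(x)$, that is,
$$DP_{\partial\Omega}(x)=D^2h(x)\ \ge\ \frac{r}{r+d(x)}\big(I-\nu\otimes\nu\big).$$
The right-hand side is symmetric with eigenvalue $\frac{r}{r+d(x)}$ on the hyperplane $\nu^{\perp}$ and $0$ in the direction $\nu$, so by monotonicity of eigenvalues for the Loewner order (Weyl/Courant--Fischer) the $k$-th largest eigenvalue of $DP_{\partial\Omega}(x)$ dominates that of the right-hand side for each $k$; therefore $DP_{\partial\Omega}(x)$ has at least $d-1$ eigenvalues $\ge \frac{r}{r+d(x,\partial\Omega)}$, as claimed. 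The only genuinely delicate points are the a.e.\ identifications $P_{\partial\Omega}=\nabla h$ and $DP_{\partial\Omega}=D^2h$ (in the Alexandrov sense) and the pointwise inequality $D^2h(x)\ge D^2H(x)$ extracted at an Alexandrov point from the single touching inequality $h\ge H$; these are standard but must be handled with care about null sets.
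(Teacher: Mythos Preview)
Your proof is correct and follows essentially the same approach as the paper: you introduce the same convex function $h(x)=\sup_{y\in\partial\Omega}\big(\langle x,y\rangle-\tfrac12|y|^2\big)$, identify $P_{\partial\Omega}=\nabla h$, and compare $h$ from below with the smooth barrier $H(x')=\tfrac12|x'|^2-\tfrac12(|x'-z|-r)^2$ built from the exterior ball to obtain the Hessian inequality and the eigenvalue bound. You are somewhat more explicit than the paper about invoking Alexandrov's theorem and Courant--Fischer, but the argument is the same.
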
 
\begin{proof}  
Set $$u(x):=\mathrm{sup}\left\{x \cdot y-\frac{1}{2}|y|^2,\,y \in \partial\Omega\right\}.$$
%It is easy to see that %$u$ is Lipschitz continuous, so 
%$u$ is differentiable Lebesgue-almost everywhere. In addition, we have
As we can rewrite $u(x)$ as follows
%\begin{eqnarray*}
$$u(x)=\sup\left\{-\frac{1}{2}|x-y|^2+\frac{1}{2}|x|^2,\,y\in\partial\Omega\right\},$$
then the supremum is  attained at $P_{\partial\Omega}(x)$ and
%As
%\begin{eqnarray*}
%u(x)&=&\sup\left\{-\frac{1}{2}|x-y|^2+\frac{1}{2}|x|^2,\;y\in\partial\Omega\right\}\\
%&=&-\frac{1}{2}|x-P_{\partial\Omega}(x)|^2+\frac{1}{2}|x|^2,
%\end{eqnarray*}
%By Remark \ref{Rem1}, 
%then it is easy to check that 
$\nabla u(x)=P_{\partial\Omega}(x)$ for a.e $x$.\\
%Then for a.e  $x \in \Omega$, we have
%$\nabla u=P_{\partial\Omega}.$\\
%But $u$ is convex, then by Lemma \ref{Prop. 4.1} we deduce that $P_{\partial\Omega}$ is a countably Lipschitz function and $ \;D^2u(x)=DP_{\partial\Omega}(x)$ for a.e $x \in \Omega$.\\
%\\

On the other hand, take $x_0 \in \accentset\circ{\Omega}$ and let $y_0$ be the center of a ball $B(y_0,r)$ such that $B(y_0,r)\cap \Omega=\emptyset\,$ and $\,|y_0 - P_{\partial\Omega}(x_0)|=r$. Then $\,x_0,\,P_{\partial\Omega}(x_0) \mbox{ and}\;\,y_0 \mbox{ are aligned}$. Indeed, if not, we get $|x_0-y_0| < |x_0 - P_{\partial\Omega}(x_0)|+r$, but
 $ |x_0-y_0|=|x_0-z|+|z-y_0|$ for some $z \in [x_0,y_0] \cap \partial\Omega$, which is a contradiction as $|x_0 - z|\geq |x_0 - P_{\partial\Omega}(x_0)|\;\mbox{and}\;|z-y_0| \geq r.$ \\
 
 Moreover, we have
\begin{eqnarray*}
u(x)&=&\sup\left\{\frac{1}{2}|x|^2-\frac{1}{2}|x-y|^2,\,y\in \partial\Omega\right\}\\
&\geq &\frac{1}{2}|x|^2-\frac{1}{2}|x-y^{\star}|^2,\;\mbox{for some}\;\,y^{\star}\in [x,y_0]\cap \partial\Omega \\
 &\geq & \frac{1}{2}|x|^2-\frac{1}{2}(|x-y_0|-r)^2 
%&=&r|x-y_0|+x \cdot y_0 -\frac{1}{2}|y_0|^2-\frac{1}{2}r^2\\
\,\,:=v(x).\\
\end{eqnarray*}
As $u(x_0)= v(x_0)$, then the function: $x\mapsto u(x)-v(x)$ has a minimum
 at $x_0$. Hence, we get that $\,D^2u(x_0) \geq D^2 v(x_0)$
and the eigenvalues of $D^2u(x_0)$ are bounded from below by those of $D^2v(x_0)$. Yet, it is easy to show that $$D^2 v(x_0)=\frac{r}{r+d(x_0,\partial\Omega)}\left(I-e(x_0)\otimes e(x_0)\right),$$
where $\,e(x_0):=(x_0-y_0)/|x_0-y_0|$. Then, we conclude by observing that
the eigenvalues of this matrix are $\,0\,$ and $\,r/r+d(x_0,\partial\Omega)$ (with multiplicity $d-1$).$\qedhere$ \end{proof}
Set $$P_t(x):=(1-t)x + tP_{\partial\Omega}(x).$$ 
By Lemma \ref{Lem 1}, we have 
              $$\mbox{det}(DP_t(x)) \geq (1-t)\left(1-t+t\frac{r}{r+d(x,\partial\Omega)}\right)^{d-1}. $$ \\
              Set $\,y:=P_t(x)$, then the Jacobian at $y$ satisfies the following estimate 
\begin{eqnarray*}
J(y):=\frac{1}{\mbox{det}(DP_t(x))} &\leq & \frac{1}{1-t}\left(\frac{r+d(x,\partial\Omega)}{r + (1-t)d(x,\partial\Omega)}\right)^{d-1}\\
            & = & \frac{1}{(1-t)^d}\left(\frac{(1-t)r+d(y,\partial\Omega)}{r + d(y,\partial\Omega)}\right)^{d-1}.\\
\end{eqnarray*}
Suppose $f \in L^{\infty}(\Omega)$ and let $\sigma$ be the transport density between $f$ and $(P_{\partial\Omega})_{\#}f$, then we have the following pointwise inequality\\
$$ \sigma(y) \leq \, \parallel f \parallel_{L^{\infty}(\Omega)} \int_0^{1-\frac{d(y,\partial\Omega)}{l(y)}}\frac{d(y,\partial\Omega)}{(1-t)^{d+1}}\left(\frac{(1-t)r+d(y,\partial\Omega)}{r + d(y,\partial\Omega)}\right)^{d-1}\mathrm{d}t,$$ \\
where $\,l(y)$ is the length of the transport ray containing $\,y$.\\ 

%:=\mathrm{sup}\left\{d(x,\partial\Omega):\,P_{\partial\Omega}(x)=P_{\partial\Omega}(y),\,x \in \bar{\Omega}\cap\{y + s(y-P_{\partial\Omega}(y)),\, s\geq 0\}\right\}.$\\ \\
Hence, \begin{eqnarray*}
\sigma(y) & \leq & \, C(d) \parallel f \parallel_{L^{\infty}(\Omega)} \int_0^{1-\frac{d(y,\partial\Omega)}{l(y)}}\frac{d(y,\partial\Omega)}{(1-t)^{d+1}} \frac{(1-t)^{d-1}r^{d-1}+(d(y,\partial\Omega))^{d-1}}{(r + d(y,\partial\Omega))^{d-1}}\mathrm{d}t \\ \\
& = & \frac{C(d) \parallel f \parallel_{L^{\infty}(\Omega)}}{(r + d(y,\partial\Omega))^{d-1}} \left( r^{d-1} d(y,\partial\Omega) \int_0^{1-\frac{d(y,\partial\Omega)}{l(y)}}\frac{1}{(1-t)^{2}}\mathrm{d}t + (d(y,\partial\Omega))^d \int_0^{1-\frac{d(y,\partial\Omega)}{l(y)}}\frac{1}{(1-t)^{d+1}}\mathrm{d}t \right). \\
\end{eqnarray*}
But, it is easy to see that \begin{multline*}
r^{d-1} d(y,\partial\Omega) \int_0^{1-\frac{d(y,\partial\Omega)}{l(y)}}\frac{1}{(1-t)^{2}}\mathrm{d}t \,\,+ \,\,(d(y,\partial\Omega))^d \int_0^{1-\frac{d(y,\partial\Omega)}{l(y)}}\frac{1}{(1-t)^{d+1}}\mathrm{d}t \\ \\
\leq \; C:=C(d,r,\mbox{diam}(\Omega)). 
%\;r^{d-1} \mbox{diam}(\Omega) +  \frac{(\mbox{diam}(\Omega))^d}{d}
\end{multline*}

Consequently, $$ \sigma(y) \leq \frac{C \parallel f \parallel_{L^{\infty}(\Omega)}}{(r + d(y,\partial\Omega))^{d-1}}.$$ \\

This provides a very useful and pointwise estimate on $\sigma$. It shows that $\sigma$ is bounded as soon as $r>0$, or we are far from the boundary $\partial\Omega$. As a particular case, we get the results of Section \ref{2} in the case $g=0$ whenever $r>0$.\\

By interpolation, we get also that $\sigma$ belongs to $L^p(\Omega)$ provided that $f \in L^p(\Omega)\,$ (see \cite{Marc}). 
%In addition, we have the following $L^p$ estimate
%$$ \parallel \sigma \parallel_{L^{p}(\Omega)}\;\leq \frac {C}{\left(r + d(y,\partial\Omega)\right)^{(d-1)(1-\frac{1}{p})}} \parallel f \parallel_{L^{p}(\Omega)},$$
%where $C:=C(d,p,r,\mbox{diam}(\Omega))$.
                \section{Technical proofs} \label{5}
                In this section, we want to give the proofs of Propositions \ref{Prop. 2.7} \& \ref{Imp}. First of all, suppose that $\,\Omega\,$ is a round polyhedron and set 
                 $$\Omega_i:=\{x=(x_1,x_2,.....,x_d)\in \bar{\Omega}:\,T(x) \in F_i\},$$        
  where $\,T\,$ is the Borel selector function, which was mentioned earlier in Proposition \ref{Selector}, $F_i\subset \partial B(b_i,r)$ is the $i$th part in the boundary of $\Omega$, contained in a sphere centered at $b_i$ and with radius $r>0$. \\
  
  Then, we have the following\,:
 \begin{proposition} \label{Prop. 5.1} For all $x \in \accentset{\circ}{\Omega}$, there does not exist $i\neq j$ such that $\,T(x) \in F_i\cap F _j.$
 %\footnote{In addition, for all $y\in \mbox{argmin}\{|x-z|+g(z),\, z \in \partial\Omega\}$, there does not exist $i\neq j$ such that $\,y \in F_i\cap F _j.$}
  \end{proposition}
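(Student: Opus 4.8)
The plan is to argue by contradiction. Suppose $x \in \accentset{\circ}{\Omega}$ and there exist $i \neq j$ with $T(x) = y \in F_i \cap F_j$, so that $y$ lies on the common boundary of the two spherical caps $F_i \subset \partial B(b_i,r)$ and $F_j \subset \partial B(b_j,r)$, with $b_i \neq b_j$. Since $y \in T(x)$ and $T$ is the selector of the argmin of $z \mapsto |x-z| + g(z)$ over $\partial\Omega$, I want to exploit the first-order optimality that $y$ must satisfy, but carefully, since $y$ sits at a ``corner'' of $\partial\Omega$ where two faces meet. The key point is that along \emph{each} of the two faces $F_i$ and $F_j$ the function $z \mapsto |x-z| + g(z)$ is minimized at $y$ (because $y$ is a global minimizer over all of $\partial\Omega$, hence in particular a minimizer over each face), so the tangential derivative conditions on both caps hold simultaneously.

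\textbf{Key steps.} First I would set up coordinates: write the outward geometry at $y$ in terms of the two unit vectors $n_i := (y-b_i)/r$ and $n_j := (y-b_j)/r$, the outer normals to the two spheres at $y$; since $b_i \neq b_j$ these are distinct. Next, I would use the minimality of $z \mapsto |x-z| + g(z)$ restricted to $F_i$ at the interior-of-face or edge point $y$: differentiating along directions tangent to $\partial B(b_i,r)$ gives that the tangential component (with respect to $n_i$) of $\frac{x-y}{|x-y|} - \nabla g(y)$ vanishes, i.e. $\frac{x-y}{|x-y|} - \nabla g(y)$ is parallel to $n_i$. Doing the same on $F_j$ forces it to be parallel to $n_j$ as well. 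Since $n_i \not\parallel n_j$, the only way both can hold is if $\frac{x-y}{|x-y|} = \nabla g(y)$, forcing $|\nabla g(y)| = 1$; but $g$ is $\lambda$-Lipschitz with $\lambda < 1$, so $|\nabla g(y)| \le \lambda < 1$, a contradiction.

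\textbf{Main obstacle.} The technical delicacy is that $y$ lies on the edge $F_i \cap F_j$, which is a lower-dimensional set, so ``differentiating along $F_i$ at $y$'' requires that $F_i$ be (relatively) smooth up to and including its boundary edge — which it is, being a piece of a genuine sphere $\partial B(b_i,r)$ — and one must be slightly careful that the relevant tangential variations stay inside $\partial\Omega$ (or argue with one-sided inequalities along curves in $F_i$ through $y$ and pass to the two-sided conclusion using that $F_i$ extends smoothly as part of the full sphere in a neighborhood of $y$, so one may freely perturb in both tangential directions of $\partial B(b_i,r)$ as long as one compares with points of the ambient sphere, using that near $y$ these points still realize $f(x) \le |x-z| + g(z)$). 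An alternative, perhaps cleaner route avoiding derivatives entirely: observe that $y$ being the argmin over $\partial\Omega$ means the function $h(z) := |x-z| + g(z)$ has $h(z) \ge h(y)$ for all $z \in \partial\Omega$, in particular for $z$ on the two arcs; then combine with the fact (provable as in Proposition~\ref{Prop 2.3}, or directly) that on a round polyhedron the minimizing point of $|x-\cdot|$ alone over a single sphere $\partial B(b_i,r)$ through which $x$ ``sees'' $y$ is the unique point $y_i$ with $x, y_i, b_i$ aligned, and a $\lambda$-Lipschitz perturbation $g$ with $\lambda<1$ does not destroy uniqueness (same computation as Proposition~\ref{Prop 2.1}); applying this to both spheres gives that $x, y, b_i$ are aligned \emph{and} $x, y, b_j$ are aligned, forcing $x$, $y$, $b_i$, $b_j$ all collinear, which together with $|y - b_i| = |y - b_j| = r$ and $b_i \neq b_j$ pins the configuration and one derives the contradiction with $|\nabla g(y)| \le \lambda < 1$ exactly as in the proof of Proposition~\ref{Prop 2.1}. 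I expect the write-up to follow this second route, with the collinearity plus the $\lambda$-Lipschitz estimate $|g(y_0) - g(y_1)| \le \lambda |y_0 - y_1|$ doing the work.
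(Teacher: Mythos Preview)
Your first approach has a real gap at the key step: when $y = T(x)$ lies on the edge $F_i \cap F_j$, it is \emph{not} an interior point of either face, so you cannot deduce that the full tangential component (with respect to $n_i$) of $\frac{x-y}{|x-y|} - \nabla g(y)$ vanishes. You only get a one-sided inequality along the tangent direction transverse to the edge and pointing into $F_i$. Your suggested fix --- extending $F_i$ to the full sphere $\partial B(b_i,r)$ and claiming that nearby points $z$ on that sphere still satisfy $f(x) \le |x-z| + g(z)$ --- does not work: such $z$ need not belong to $\partial\Omega$, so the minimality of $y$ over $\partial\Omega$ says nothing about them. The same obstruction blocks your second route: $y$ is the minimizer of $|x-\cdot|+g(\cdot)$ only over the cap $F_i \subset \partial B(b_i,r)$, not over the full sphere, so you cannot conclude that $x,y,b_i$ are collinear.

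The paper's argument works precisely with one-sided inequalities, and the ingredient you are missing is geometric. Since $x \in \accentset{\circ}{\Omega}$ and (by Proposition~\ref{Prop 2.2}) the segment $[x,y]$ meets $\partial\Omega$ only at $y$, the vector $x-y$ points into the ``interior cone'' of $\Omega$ at the edge point $y$; consequently one can write
\[
x - y \;=\; \alpha\,\dot\gamma_1(0) + \beta\,\dot\gamma_2(0),\qquad \alpha,\beta > 0,
\]
where $\dot\gamma_k(0)$ is tangent to $F_k$ at $y$ and points \emph{into} $F_k$. The two one-sided first-order conditions
\[
\Big(-\tfrac{x-y}{|x-y|} + \nabla g(y)\Big)\cdot \dot\gamma_k(0) \ge 0,\qquad k=1,2,
\]
can then be combined with the \emph{positive} weights $\alpha,\beta$ to yield $-|x-y| + \nabla g(y)\cdot(x-y) \ge 0$, hence $|\nabla g(y)| \ge 1$, contradicting $\lambda<1$. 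The positive decomposition of $x-y$ is the device that turns two inequalities into the equality-strength conclusion you were aiming for; without it, the one-sided conditions alone do not force $\frac{x-y}{|x-y|} = \nabla g(y)$.
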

\begin{proof} Suppose that this is not the case at some point  $x \in \accentset{\circ}{\Omega}$, i.e there exist two different faces $F_i$ and $F_j$ such that $ T(x)\in F_i \cap F_j$. By Proposition \ref{Prop 2.2}, the segment $[x,T(x)]$ cannot intersect the boundary of $\Omega$ at another point $z \neq T(x)$. Then taking into account the geometric form of $\Omega$ (see the proof of the Proposition \ref{Prop. 6.1}), we can assume that there exist $\dot{\gamma_1}(0) $ and $\dot{\gamma_2}(0)$ two tangent vectors in $T(x)$ on $F_i$ and $F_j$ respectively in such a way that the angle between them is less than $180^{\circ}$ ($\gamma_1$ and $\gamma_2$ 
  are two curves plotted on $F_i$ and $F_j$ respectively) and $$ x-T(x)= \alpha \dot{\gamma_1}(0) + \beta \dot{\gamma_2}(0)$$
  for some two positive constants $\alpha$ and $\beta$.
  \\

  Set $$f_1(t)=|x-\gamma_1(t)| + g(\gamma_1(t))$$
and $$f_2(t)=|x-\gamma_2(t)| + g(\gamma_2(t)).$$ \\
By optimality of $\,T(x)=\gamma_1(0)=\gamma_2(0)$, we can deduce that
$\,\dot{f_1}(0)\geq 0 \;\,\mbox{and}\,\;\dot{f_2}(0)\geq 0$. Hence, we have 
$$ -\frac{x-T(x)}{|x-T(x)|}\cdot\dot{\gamma_1}(0)+\nabla g(T(x))\cdot\dot{\gamma_1}(0)\geq 0$$
and $$ -\frac{x-T(x)}{|x-T(x)|}\cdot\dot{\gamma_2}(0)+\nabla g(T(x))\cdot\dot{\gamma_2}(0)\geq 0 .$$ \\
We multiply the first inequality by $\alpha$, the second one by $\beta$ and we take the sum, we get\\
$$-|x-T(x)|+\nabla g(T(x))\cdot(x-T(x))\geq 0 $$
and $$1\leq |\nabla g(T(x))|\leq \lambda, $$ which is a contradiciton.
$ \qedhere$ \end{proof} 
\begin{proposition} \label{Prop. 5.2} For every $\,x \in \Omega_i \cap S \cap \accentset\circ{\Omega}$, there exists a neighborhood of $x$ contained in $\Omega_i $.
\end{proposition}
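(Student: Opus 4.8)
The plan is to argue by contradiction using the closedness of the graph of $T$ (established in Proposition \ref{Selector}) together with the local regularity of the boundary structure. Suppose the conclusion fails at some $x \in \Omega_i \cap S \cap \accentset{\circ}{\Omega}$, i.e. every neighborhood of $x$ meets $\Omega \setminus \Omega_i$. Then there is a sequence $x_n \to x$ with $x_n \notin \Omega_i$, so $T(x_n) \in F_j$ for some $j = j(n) \neq i$; since there are finitely many faces $F_1, \dots, F_m$, we may pass to a subsequence along which $j(n) \equiv j$ is constant, with $j \neq i$. The points $T(x_n)$ lie in the compact set $F_j$, so up to a further subsequence $T(x_n) \to y \in F_j$. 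Because the graph of the selector $T$ is closed (as shown in the proof of Proposition \ref{Selector}, passing to the limit in the inequality $|x_n - T(x_n)| + g(T(x_n)) \leq |x_n - z| + g(z)$ for all $z \in \partial\Omega$), we get $y \in T(x)$. But $x \in S$ means $T(x)$ is a singleton, hence $y = T(x)$.

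First I would record the immediate consequence: $T(x) = y \in F_j$, so by definition $x \in \Omega_j$, and since also $x \in \Omega_i$, we have $T(x) \in F_i \cap F_j$ with $i \neq j$. This directly contradicts Proposition \ref{Prop. 5.1}, since $x \in \accentset{\circ}{\Omega}$. Therefore no such sequence can exist, and some neighborhood of $x$ is contained in $\Omega_i$, which is the claim.

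The main point to be careful about is the measurability/selection setup: I should make sure that ``$x_n \notin \Omega_i$'' genuinely forces $T(x_n)$ into some other face $F_j$, which is clear once one notes that the faces cover $\partial\Omega$ and that, although a boundary point might a priori lie in several faces, the definition of $\Omega_i$ only requires $T(x_n) \in F_i$; so $x_n \notin \Omega_i$ means precisely $T(x_n) \notin F_i$, hence $T(x_n) \in F_j$ for some $j \neq i$. The finiteness of the number of faces is what allows the diagonal extraction of a constant index $j$, and the compactness of each $F_j$ (closed subset of a sphere inside the bounded set $\bar{\Omega}$) gives the convergent subsequence $T(x_n) \to y$. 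The genuine obstacle, which is already dispatched by Proposition \ref{Prop. 5.1}, is ruling out that a transport ray from an interior point can aim at a point shared by two faces; everything else here is a routine compactness-and-closed-graph argument, so I do not expect additional difficulty beyond invoking that proposition and the closedness of $\mathrm{graph}(T)$.
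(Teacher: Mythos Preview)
Your proof is correct and follows essentially the same route as the paper: assume a sequence $x_n \to x$ with $T(x_n) \in F_j$ for some fixed $j \neq i$, extract a limit $y \in F_j$ of $T(x_n)$, pass to the limit in the optimality inequality to see that $y$ is a minimizer, then use $x \in S$ to force $y = T(x) \in F_i \cap F_j$, contradicting Proposition~\ref{Prop. 5.1}. The only differences are that you make explicit the pigeonhole extraction of a constant face index $j$ and the role of $x \in S$ in identifying $y$ with $T(x)$, both of which the paper leaves implicit.
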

\begin{proof} Suppose that this is not the case at some point $x$. Then, there exists a sequence $(x_n)_n$ such that $x_n\rightarrow x$ and $ T(x_n)\in F_j$ for some $j \neq i$. %As $F_j$ is compact, then there exists a subsequence $(T(x_{n_k}))_{n_k}$ such that 
Yet, up to a subsequence, we can assume that $T(x_n)\rightarrow y \in F_j$. By definition of $T$, we have
$$|x_n-T(x_n)|+g(T(x_n))\leq |x_n-z|+g(z) \;\mbox{for all}\;z \in \partial\Omega.$$
Passing to the limit, we get
$$|x-y|+g(y)\leq |x-z|+g(z)\;\mbox{for all}\; z \in \partial\Omega, $$
% and $\;\alpha=T(x) \in F_i\cap F_j$, 
which is in contradiction with Proposition \ref{Prop. 5.1}. $\qedhere$ \end{proof}
Consider $\Omega_1$ (eventually it will be the same for the other $\Omega_i$)
and recall that $$\Omega_1:=\{x=(x_1,....,x_d)\in \bar{\Omega}:\,T(x) \in F_1\}.$$
 Suppose that Proposition \ref{Prop. 2.7} is true and fix $x \in \Omega_1 \cap \accentset{\circ}{\Omega} \backslash N $. After a translation and rotation of axis, we can suppose that the tangent space at $T(x)$ on $F_1$ is contained in the plane $x_d=0$ and that there exists $\varphi:U \mapsto \mathbb{R}$, where $U \subset \mathbb{R}^{d-1}$, a parameterization of $F_1$, i.e for any $z:=(z_1,...,z_d) \in F_1,$ we have $\bar{z}:=(z_1,...,z_{d-1})\in U$ and $z_d=\varphi(\bar{z})$ (notice that an explicit formula of $\varphi$ is not needed for the sequel).\\
 % but notice that $D^2 \varphi (\bar{T}(x))=\frac{1}{r} I$).\\
 %, but an explicit formula of $\varphi$ is not needed for the sequel).\\
 
 For simplicity of notation, we denote $\alpha(x):=|x-T(x)|.$\\
 \\
 Set  
$$f(z)=\sqrt{|\bar{x}-z|^2 + (x_d-\varphi(z))^2}+ g(z,\varphi(z))\;\mbox{for all} \;z \in U.$$ \\
For any $i \in \{1,....,d-1\}$, \\
% for any $i \in \{1,....,d-1\}$ and all $ z \in \accentset{\circ}{U}$, we have\\
$$\frac{\partial f}{\partial z_i}(z)=\frac{(z_i-x_i)+(\varphi(z)-x_d)\frac{\partial \varphi}{\partial z_i}(z)}{\sqrt{|\bar{x}-z|^2 + (x_d-\varphi(z))^2}}+
\frac{\partial g}{\partial z_i}(z,\varphi(z))+\frac{\partial g}{\partial z_d}(z,\varphi(z))\frac{\partial \varphi}{\partial z_i}(z).$$ \\
\\
 Set $\,T(x)=(\bar{T}(x),\varphi(\bar{T}(x)))$, 
 where $\,\bar{T}(x):=(T_1(x),...,T_{d-1}(x))$. Then, we have\\ $$ \bar{T}(x)=\mathrm{argmin}\{f(z),\;z \in U\}.$$\\
By Proposition \ref{Prop. 5.1}, $\bar{T}(x) \in \accentset{\circ}{U}$. Hence, we get \\ $$\frac{\partial f}{\partial z_i}(\bar{T}(x))=0\;\;\mbox{for all}\;\,i \in \{1,....,d-1\}
$$ 
or equivalently, \\
\begin{equation} \label{E1}
\frac{T_i(x)-x_i}{\alpha(x)}+ \frac{\partial g}{\partial z_i}(T(x)) +
\frac{\varphi(\bar{T}(x)) - x_d}{\alpha(x)}\frac{\partial \varphi}{\partial z_i}(\bar{T}(x)) +
\frac{\partial g}{\partial z_d}(T(x))\frac{\partial \varphi}{\partial z_i}(\bar{T}(x))=0 \\
\end{equation}
for all $\;i \in \{1,....,d-1\}$.
%$$\frac{T_i(x)-x_i}{|x-T(x)|}+ \frac{\partial g}{\partial z_i}(T(x)) +
%\frac{\varphi(\bar{T}(x)) - x_d}{|x-T(x)|}\frac{\partial \varphi}{\partial z_i}(\bar{T}(x)) +
%\frac{\partial g}{\partial z_d}(T(x))\frac{\partial \varphi}{\partial z_i}(\bar{T}(x))=0.\quad\mbox{(E)}$$ 
\\ \\
 By Propositions \ref{Prop. 2.7} \& \ref{Prop. 5.2}, the equality in (\ref{E1}) holds in a neighborhood of $x$. Then, differentiating (\ref{E1}) with respect to $x_j$ and taking into account the fact that in this new system of coordinates we have 
  $$\frac{\partial \varphi}{\partial z_i}(\bar{T}(x))=0\;\mbox{ for all}\;\, i \in \{1,....,d-1\},$$ we get
  \begin{align}\label{E2}
  \nonumber
  \frac{\partial T_i}{\partial x_j}(x)+ \frac{(T_i(x)-x_i)}{(\alpha(x))^2}\sum_{k=1}^{d-1}(x_k - T_k(x))\frac{\partial T_k}{\partial x_j}
+ \alpha(x) \sum_{k=1}^{d-1}\frac{\partial^2g}{\partial z_i\partial z_k}(T(x))\frac{\partial T_k}{\partial x_j}(x)
\\ 
\,\;+ \;\;\,(\varphi(\bar{T}(x)) - x_d)\sum_{k=1}^{d-1}\frac{\partial^2\varphi}{\partial z_i\partial z_k}(\bar{T}(x))\frac{\partial T_k}{\partial x_j}(x) 
+ \alpha(x) \frac{\partial g}{\partial z_d}(T(x))\sum_{k=1}^{d-1}\frac{\partial^2 \varphi}{\partial z_i\partial z_k}(\bar{T}(x))\frac{\partial T_k}{\partial x_j}(x)
  \end{align}
  $$
=\delta_{ij} + \frac{(T_i(x)-x_i)(x_j - T_j(x))}{(\alpha(x))^2}$$
  $\mbox{for all}\;i,\,j \in \{1,...,d-1\}. $ \\

  On the other hand, we have \\
\\
$DT_t(x)= (1-t)I + tDT(x)=\begin{pmatrix}
 1-t + t \frac{\partial T_1}{\partial x_1} & t\frac{\partial T_1}{\partial x_2} & ...& t\frac{\partial T_1}{\partial x_d} \\
 \\
 t\frac{\partial T_2}{\partial x_1} & 1-t + t \frac{\partial T_2}{\partial x_2} &...& t \frac{\partial T_2}{\partial x_d} \\
 \\
 \\
 ...
 \\
 \\
 0 & ...&0& 1-t
 \\
 \end{pmatrix}.$
\\ 
\\ 
Then,\\ $$|DT_t(x)|=(1-t)|\mbox{det}(A)|,$$ \\
 where $\,A=\left((1-t)\delta_{ij} + t\frac{\partial T_i}{\partial x_j}(x)\right)_{i,j=1,....,d-1}.$ \\ 
 \\
 %Set $$C=\left(\frac{\delta_{ij}}{\alpha(x)}  - \frac{(x_i-T_i(x))(x_j-T_j(x))}{(\alpha(x))^3}\right)_{ij}$$
% and
 %$$M=\left(\frac{\partial^2g}{\partial z_i\partial z_j}(T(x))-\frac{(x_d-\varphi(\bar{T}(x)))}{\alpha(x)}\frac{\partial^2\varphi}{\partial z_i\partial z_j}(\bar{T}(x))+\frac{\partial g}{\partial z_d}(T(x))\frac{\partial^2\varphi}{\partial z_i\partial z_j}(\bar{T}(x))\right)_{ij}.$$
%\\
%\\
\\
Set $$F=\left(\delta_{ij}  - \frac{(x_i-T_i(x))(x_j-T_j(x))}{(\alpha(x))^2}\right)_{ij}$$
and \\$$ N=\left(\alpha(x)\frac{\partial^2g}{\partial z_i\partial z_j}(T(x))+(\varphi(\bar{T}(x))-x_d)\frac{\partial^2\varphi}{\partial z_i\partial z_j}(\bar{T}(x))\\
 \\+\alpha(x)\frac{\partial g}{\partial z_d}(T(x))\frac{\partial^2\varphi}{\partial z_i\partial z_j}(\bar{T}(x))\right)_{ij}.$$\\ \\
Suppose that $F+N$ is invertible for a.e $\,x\in\Omega$ (see Proposition \ref{Inv} below). From (\ref{E2}), we observe that $$\left(\frac{\partial T_i}{\partial x_j}(x)\right)_{i,j=1,....,d-1}=(F+N)^{-1} F.$$ 
Hence,
%\begin{eqnarray*}
% A = (1-t)I+t(F+N)^{-1}F &=&(F+N)^{-1}((1-t)(F+N)+ tF)\\
 % &=&(F+N)^{-1}(F+(1-t)N)
%\end{eqnarray*}  
$$A = (1-t)I+t(F+N)^{-1}F =(F+N)^{-1}(F+(1-t)N)$$
and $$\frac{1}{|\mbox{det}(A)|}=\frac{|\mbox{det}(F+N)|}{|\mbox{det}(F+(1-t)N)|}.$$ \\
As $\,D^2 \varphi(\bar{T}(x)) =\frac{-1}{r}I$, then
 $$\pm(F+N) \leq C(d,\mbox{diam}(\Omega),\lambda,r,D^2 g) I$$ and $$|\mbox{det}(F+N)| \leq C(d,\mbox{diam}(\Omega),\lambda,r,D^2 g).$$ \\
By Proposition \ref{Prop 2.3}, we have \\
 $$(1-t)N=\left(\alpha(y)\frac{\partial^2g}{\partial z_i\partial z_j}(T(y))+(\varphi(\bar{T}(y))-y_d)\frac{\partial^2\varphi}{\partial z_i\partial z_j}(\bar{T}(y))+\alpha(y)\frac{\partial g}{\partial z_d}(T(y))\frac{\partial^2\varphi}{\partial z_i\partial z_j}(\bar{T}(y))\right),$$\\
 where $\,y:=T_t(x)$. \\ \\
 From (\ref{E1}), we have
  $$\frac{T_i(x) - x_i}{|x-T(x)|} + \frac{\partial g}{\partial z_i}(T(x))=0\;\;\mbox{for any}\;\;i \in \{1,...,d-1\}. $$
 \\
 Then, $$|\bar{x} - \bar{T}(x)| \leq \lambda |x-T(x)|. $$
 % and $\bar{T}(x):=(T_1(x),...,T_{d-1}(x))$.\\
But,
\begin{eqnarray*}
<F\xi,\xi> &=& |\xi|^2 - \left( \frac{\bar{x} -\bar{T}(x)}{|x-T(x)|}\cdot\xi\right)^2 \\
&\geq & |\xi|^2 - \frac{|\bar{x} -\bar{T}(x)|^2}{|x-T(x)|^2}|\xi|^2\\
  &\geq & (1-\lambda^2)|\xi|^2.
 \end{eqnarray*} \\
 Hence, there exists some $C(\lambda) > 0$ such that $F \geq C(\lambda) I$. In addition, it is easy to observe that $$\frac{1}{2}F + (1-t)N
 \geq \left(\frac{C(\lambda)}{2} + C_1(d,\lambda,r,D^2g) \alpha(y)\right) I$$ \\
 for some $\,C_1(d,\lambda,r,D^2g) < 0 $.\\
\\
 Now, suppose that $$|y-T(y)| \leq \frac{C(\lambda)}{-2 C_1(d,\lambda,r,D^2g)}.$$
 Then $$ F +(1-t)N \geq \frac{1}{2}F \geq \frac{C(\lambda)}{2}I$$
 and $$\mbox{det}(F+(1-t)N) \geq C(d,\lambda)>0.$$ \\
 \begin{lemma} We have $\,|y-T(y)|\leq \frac{1+\lambda}{1-\lambda} d(y,\partial\Omega)\;\,\mbox{for all }\; y \in \Omega.$ \end{lemma}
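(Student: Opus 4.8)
The plan is to prove the two-sided control of the ``exit distance'' $|y - T(y)|$ by the geometric distance $d(y,\partial\Omega)$, with constants depending only on the Lipschitz constant $\lambda$ of $g$. The upper bound $|y - T(y)| \le \frac{1+\lambda}{1-\lambda}\, d(y,\partial\Omega)$ is the content of this lemma; the lower bound (which comes essentially for free, namely $|y-T(y)| \ge d(y,\partial\Omega)$, since $T(y) \in \partial\Omega$) is what makes this estimate useful in Proposition~\ref{Imp}. So the whole work is in the upper bound.

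First I would let $z := P_{\partial\Omega}(y)$ be a point of $\partial\Omega$ realizing the Euclidean distance, so $|y - z| = d(y,\partial\Omega)$. Since $T(y) \in \argmin\{|y - w| + g(w) : w \in \partial\Omega\}$, optimality against the competitor $z$ gives
$$ |y - T(y)| + g(T(y)) \le |y - z| + g(z).$$
Rearranging and using that $g$ is $\lambda$-Lipschitz on $\partial\Omega$,
$$ |y - T(y)| \le |y - z| + g(z) - g(T(y)) \le d(y,\partial\Omega) + \lambda\, |z - T(y)|.$$
Now I would bound $|z - T(y)|$ by the triangle inequality: $|z - T(y)| \le |z - y| + |y - T(y)| = d(y,\partial\Omega) + |y - T(y)|$. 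Substituting,
$$ |y - T(y)| \le d(y,\partial\Omega) + \lambda\, d(y,\partial\Omega) + \lambda\, |y - T(y)| = (1+\lambda)\, d(y,\partial\Omega) + \lambda\, |y - T(y)|.$$
Since $\lambda < 1$, I can absorb the last term on the left and divide by $1 - \lambda$, obtaining $|y - T(y)| \le \frac{1+\lambda}{1-\lambda}\, d(y,\partial\Omega)$, which is exactly the claimed inequality.

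There is essentially no obstacle here: the only subtlety is making sure $P_{\partial\Omega}(y)$ is well-defined, but we only need \emph{some} nearest point of $\partial\Omega$ to exist, which holds since $\partial\Omega$ is compact and nonempty, so the $\argmin$ is attained (not necessarily uniquely, but any selection works). The estimate is purely a one-line chain of triangle inequalities plus the Lipschitz bound on $g$, with no regularity of $\Omega$ or $g$ beyond $\lambda$-Lipschitzness required. The point of isolating it as a lemma is its role downstream: combined with $d(y,\partial\Omega) \le |y-T(y)|$ it shows that being close to $\partial\Omega$ in the geometric sense and in the transport sense are equivalent up to the fixed constant $\frac{1+\lambda}{1-\lambda}$, which lets one replace the compact set $K = \{y : d(y,\partial\Omega) \ge L\}$ appearing in Proposition~\ref{Imp} by a level set of $|y - T(y)|$ and conversely.
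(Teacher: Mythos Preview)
Your proof is correct and follows essentially the same route as the paper: compare $T(y)$ to a nearest boundary point $z=P_{\partial\Omega}(y)$ via optimality, apply the $\lambda$-Lipschitz bound on $g$ to get $|y-T(y)|\le d(y,\partial\Omega)+\lambda|z-T(y)|$, then use the triangle inequality $|z-T(y)|\le d(y,\partial\Omega)+|y-T(y)|$ and absorb. The paper's version is more terse but the argument is identical.
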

 \begin{proof}
 By optimality of $\,T(y)$, we have $$|y-T(y)|+g(T(y)) \leq |y-P_{\partial\Omega}(y)|+g(P_{\partial\Omega}(y)).$$ 
As $g$ is $\lambda$-Lip, then $$|y-T(y)|\leq d(y,\partial\Omega)+\lambda|T(y)-P_{\partial\Omega}(y)|.$$
Yet, $$|T(y)-P_{\partial\Omega}(y)| \leq |y-T(y)| +d(y,\partial\Omega). \qedhere $$
\\
 \end{proof}
Set $$L:=\frac{(1-\lambda)C(\lambda)}{-2(1+\lambda)C_1(d,\lambda,r,D^2g)} >0$$
and $$ K:=\{y \in \bar{\Omega}:\, d(y ,\partial\Omega)\geq L\}.$$\\
Hence, there exists a compact set $K$ such that for a.e $x \in \Omega$, if $\;y:=T_t(x) \in \Omega\backslash K$ then we have the following estimate
  $$|\mbox{det}(DT_t(x))| \geq C(1-t),$$ 
  where $\,C:=C(d,\mbox{diam}(\Omega),\lambda,r,D^2 g)>0.$ \\ 
  \\
  
 Now, we introduce the proof of the Proposition \ref{Prop. 2.7}.\\
\begin{proof}
Fix $a  \in \accentset{\circ}{\Omega} \cap S \cap \Omega_1$ and, without loss of generality, suppose that the tangent space at $T(a)$ on $F_1$ is contained in the plane $x_d=0$. Let $\varphi$ be a parametrization of $F_1$ and for any $\,i \in \{1,...,d-1\}$, set $$h_i(x,y):=\frac{y_i-x_i}{\sqrt{|\bar{x}-y|^2  + (x_d-\varphi(y))^2}}\;+\; \frac{\partial g}{\partial z_i}(y,\varphi(y)) \;+\; \frac{\varphi(y) - x_d}{\sqrt{|\bar{x}-y|^2 + (x_d-\varphi(y))^2} } \frac{\partial \varphi }{\partial z_i}(y)$$
 $$+\; \frac{\partial g}{\partial z_d}(y, \varphi(y)) \frac{\partial \varphi }{\partial z_i}(y)$$ for all $(x,y)\in \accentset\circ{\Omega} \times \accentset{\circ}{U}$.
\\
\\
Set $h:=(h_i)_{i}$, then it is easy to see that $h \in C^1(\accentset{\circ}{\Omega} \times \accentset{\circ}{U},\mathbb{R}^{d-1})$. By Proposition \ref{Inv}, we can assume that the matrix $ (\frac{\partial h_i}{\partial y_j}(a,\bar{T}(a)))_{1\leq i,j \leq d-1}$ is invertible. As $\,h(a,\bar{T}(a))=0$, then by the Implicit Function Theorem, there exist an open neighborhood $K$ of $(a,\bar{T}(a))$ in $\accentset{\circ}{\Omega} \times \accentset{\circ}{U}$, a neighborhood $V$ of $a$ in $\accentset{\circ}{\Omega}$ and a function $q: V \rightarrow \mathbb{R}^{d-1}$ of class $C^1$ such that for all $(x,y) \in K$, we have $$h(x,y)=0 \Leftrightarrow y=q(x).$$ \\
By Proposition \ref{Prop. 5.2} and the fact that $\,T\,$ is continuous at $\,a$, we infer that there exists a small open neighborhood $v(a)\subset \Omega_1$ of $a\,$ such that $(x,\bar{T}(x))\in K$ for every $x \in v(a)$. But $h(x,\bar{T}(x))=0$ for every $x\in v(a)$, then $\bar{T}(x)=q(x)$ and $T$ is a $C^1$ function on $v(a)$. Moreover, we can assume also that $v(a) \subset S$. %Indeed, it is easy to check again that for every $x \in v(a)$,  if
%Hence, there exists a small open neighborhood $v(a) \subset V$ of $a$ such that $v(a) \subset S$, 
Indeed, if this is not the case, then there exists a sequence $(a_n)_n$ such that $a_n \rightarrow a$ and for all $n$, $a_n \notin S$ (i.e for all $n$, there exist $z_n,\,w_{n} \in \mbox{argmin}\left\{|a_n-y|+g(y),\,y \in \partial\Omega\right\}$ such that $z_n \neq w_n$).
%$\,u_n,\,v_n \i
%for every $y \in \mbox{argmin}\left\{|x-z|+g(z),\,z \in \partial\Omega\right\}$,
 %then $y \in F_1$ $h(x,\bar{y})=0$
% for all $n$.
%$$|a_{n}-s_{n}|+g(s_{n}) \leq |a_{n} - z| + g(z)\;\mbox{for all}\;z \in \partial\Omega $$  
%and another sequence $b_n \in F_1$ such that $b_n \neq s_n\;\mbox{for all}\; n$ and 
%$$|a_n-b_n|+g(b_n) \leq |a_n - z| + g(z)\;\mbox{for all}\;z\in\partial\Omega.$$\\
%But $u_{n} \rightarrow u \in F_1,\,v_n \rightarrow v\in F_1$ and
As $\,a \in S$, then $(z_{n})_n$ and $(w_n)_n$ converge to $\,T(a)$.
% In this case, we get that for $n$ large enough, $(a_n,\bar{z}_n),\,(a_n,\bar{w}_n)\in K$ 
But $h(a_n,\bar{z}_n)=h(a_n,\bar{w}_n)=0$, then $\bar{z}_n=\bar{w}_n=q(a_n)$, which is a contradiction. \\ 

Consequently, there exists a negligible closed  set $N$ in $\bar{\Omega}$ such that $\accentset{\circ}{\Omega} \backslash N \subset S$. In addition, $T$ is a $C^1$ function on $\accentset{\circ}{\Omega} \backslash N$. $\qedhere$ 
 %Moreover,  $ \accentset{\circ}{\Omega} \backslash N $.
 \end{proof}
It remains to prove the following\,: 
\begin{proposition} \label{Inv} The matrix $\,F+N$ is invertible for a.e $x\in \Omega.$ 
\end{proposition}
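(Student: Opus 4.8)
The plan is to reduce the invertibility of $F+N$ to a one--dimensional statement along each transport ray and then to integrate over the family of rays. Note first that $F$ and $N$ depend on $x$ only through $x$, $T(x)$ and the local geometry of $\partial\Omega$ near $T(x)$, so they are well defined for a.e.\ $x$ --- that is, for $x\in\accentset{\circ}{\Omega}\cap S$ with $T(x)$ in the relative interior of some face, which by Propositions \ref{Prop 2.1} and \ref{Prop. 5.1} excludes only a Lebesgue--null set; in particular no regularity of $T$ is needed here (that will come, afterwards, from the present proposition).

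I would fix such an $x$ and keep the coordinates adapted to $q:=T(x)$: the origin is $q$, the face $F_i$ containing $q$ has tangent plane $\{z_d=0\}$ there and is the graph $z_d=\varphi(\bar z)$ with $\varphi(0)=0$, $\nabla\varphi(0)=0$, $D^2\varphi(0)=-\tfrac1r I$. From the optimality condition (\ref{E1}), using $\nabla\varphi(0)=0$, the unit ray direction $v:=(x-q)/|x-q|$ has tangential part $\bar v:=(v_1,\dots,v_{d-1})=\nabla_\tau g(q)$, the tangential gradient of $g$ at $q$; since $g$ is $\lambda$--Lipschitz, $|\bar v|\le|\nabla g(q)|\le\lambda<1$ and $v_d=\sqrt{1-|\bar v|^2}\ge\sqrt{1-\lambda^2}>0$. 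In particular $v$ is determined by $q$ alone, so all transport rays arriving at $q$ lie on one maximal segment $R_q$. I would then compute $F+N$ at a point $y=q+s\,v\in R_q$ with $T(y)=q$ (by Proposition \ref{Prop 2.3} these are exactly the points of $[q,x]$): using $\bar T(y)=0$, $\varphi(\bar T(y))=0$, $|y-T(y)|=s$, $(y_i-T_i(y))_{i\le d-1}=s\,\bar v$ and $D^2\varphi(0)=-\tfrac1r I$, the definitions of $F$ and $N$ give
$$F(y)=I-\bar v\otimes\bar v=:F_0(q),\qquad N(y)=s\,B(q),$$
where $B(q)=\big(\partial^2_{z_iz_j}g(q)\big)_{ij}+\tfrac{v_d(q)-\partial_{z_d}g(q)}{r}\,I$ depends only on $q$. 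Hence along $R_q$ one has $\det(F(y)+N(y))=P_q(s):=\det(F_0(q)+s\,B(q))$, a polynomial in $s$ of degree $\le d-1$ which is \emph{not} identically zero, since $P_q(0)=\det(I-\bar v\otimes\bar v)=1-|\bar v(q)|^2\ge1-\lambda^2>0$. Thus $P_q$ has at most $d-1$ roots, i.e.\ $\det(F+N)$ vanishes at only finitely many points of each transport ray.

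To conclude, I would pass from ``finitely many points per ray'' to ``$\mathcal L^d$--negligible'' via the ray map $R(q,s):=q+s\,v(q)$, which is Lipschitz on $\partial\Omega\times[0,\mbox{diam}(\Omega)]$ because $v(q)=\nabla_\tau g(q)+\sqrt{1-|\nabla_\tau g(q)|^2}\,n(q)$ with $g\in C^2(\bar\Omega)$, $\partial\Omega$ a finite union of spherical pieces, and $|\nabla_\tau g|\le\lambda<1$ keeping $v$ off the singularity of the square root. Up to a Lebesgue--null set --- the upper endpoints of transport rays, together with $\accentset{\circ}{\Omega}\setminus S$ and $\{x:T(x)\in F_i\cap F_j\}$ --- every $y\in\accentset{\circ}{\Omega}\cap S$ equals $R(T(y),|y-T(y)|)$ with $|y-T(y)|$ below the ray length, and on this range $R(q,s)$ lies in $\{\det(F+N)=0\}$ exactly when $P_q(s)=0$; so the $R$--preimage of the bad set has finite sections in $s$, hence by Fubini is null for $\mathcal H^{d-1}$ on $\partial\Omega$ times $\mathcal L^1$, and, a Lipschitz map not increasing $\mathcal H^d=\mathcal L^d$, the bad set is Lebesgue--negligible. (Alternatively, one may invoke the classical disintegration of $\mathcal L^d$ on the transport set along the transport ray decomposition, whose conditional measures are absolutely continuous with respect to $\mathcal H^1$ on each ray.) The step I expect to be the main obstacle is precisely this last one --- setting up the ray foliation rigorously, i.e.\ injectivity of $R$ off a null set and negligibility of the set of ray endpoints --- which is however the well--known ray decomposition for transport with the Euclidean cost; the algebraic crux, by contrast, is the rigidity inequality $P_q(0)=1-|\bar v(q)|^2\ge1-\lambda^2>0$, which is where the hypothesis $\lambda<1$ enters.
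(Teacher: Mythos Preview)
Your proof is correct and follows essentially the same strategy as the paper: both show that $F$ is constant and $N$ scales linearly along each transport ray, so $\det(F+N)$ is a nontrivial polynomial in the ray parameter with at most $d-1$ zeros per ray, and then invoke the ray decomposition to conclude negligibility. The only real difference is presentational---you parametrize rays by arc length $s$ from $q=T(x)$ and explicitly compute $P_q(0)=1-|\bar v|^2\ge 1-\lambda^2>0$, while the paper parametrizes by $t\in[0,1]$ from an upper endpoint $x_0$ and relies on the earlier inequality $F\ge C(\lambda)I$; and for the negligibility step you build the Lipschitz ray map $R(q,s)$ and use Fubini directly, whereas the paper simply cites the countably--Lipschitz transport--ray direction result from the literature.
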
  
\begin{proof} It is enough to prove that the determinant of $F+N$ only vanishes at a countable number of points on each transport ray, 
%(see for instance Chaptre 3 in \cite{8}).
 since it is well-known that a set that meets each transport ray at a countable number of points is negligible and this is due to the fact that the direction of the transport rays is countably Lipschitz (for more details about the proof of this property, we can see for instance Chapter 3 in \cite{8}). To do that, fix $x_0 \in \Omega\,$ and set $x:=T_t(x_0),$ where $t \in [0,1]$.
 Then, we have $$F(x)=F(x_0) \,\;\mbox{and}\;\, N(x)=(1-t)N(x_0).$$
\\
  But $F(x_0)$ is invertible, then we get
  
 \begin{eqnarray*} 
 \mbox{det} (F(x) + N(x)) &=& \mbox{det}\left(F(x_0) + (1-t)N(x_0)\right)\\  \\
 &=& (1-t)^{d-1} \mathrm{det}\left(F(x_0)\right) \mathrm{det}\left(\frac{1}{1-t}I+ [F(x_0)]^{-1} N(x_0)\right),\\
 \end{eqnarray*} 
 which can only vanish at most for a finite number of different values of $\,t$. $\qedhere$
 \end{proof}
%\section{$L^p$ summability on the transport density in more general domains} \label{6} 
%\begin{proposition} \label{Prop.Reg}
%Suppose that $\Omega$ is convex, $f \in L^p(\Omega)$ and $g_i$ is $\lambda_i$-Lip with $\lambda_i < 1$ and $\,C^{1,1}(\bar{\Omega})\,$ for $\,i=1,2$. Let $\sigma$ be the transport density associated to an optimal transport plan for (KPb) and $\,W$ be a minimizer for (BPb), then $\sigma$ (resp. $W$) belongs to $L^p(\Omega)$ (resp. $L^p(\Omega,\mathbb{R}^d)$).
%\end{proposition}
%\begin{proof}
%This follows directly from Propositions \ref{prop transp dens} \& \ref{Prop. 6.1}.
%\end{proof}
%\begin{remark}
%Suppose that  $\Omega$  has a uniform exterior ball of radius $r>0,\,f \in L^p(\Omega)$ and $g_1=g_2:=g$ is $\lambda$-Lip with $\lambda < 1$ and $\,C^{1,1}(\bar{\Omega})$.
% Let $\gamma$ be a minimizer for (KPb),
 %Then the transport density $\sigma_{\gamma}$ associated to an optimal plan $\gamma$ for (KPb) belongs to $L^p(\Omega)$.
% Then, the same result as in Proposition \ref{Prop.Reg} holds in this case.
%\end{remark}
%\begin{proof}
%It follows directly from Propositions \ref{prop transp dens}, \ref{Prop. 6.1} \& \ref{support}.
%\end{proof}

\end{document}